\numberwithin{equation}{section}
\theoremstyle{plain}
\newtheorem{theorem}{Theorem}[section]
\newtheorem{corollary}[theorem]{Corollary}
\newtheorem{lemma}[theorem]{Lemma}
\newtheorem{definition}[theorem]{Definition}
\newtheorem*{remark}{Remark}
\newtheorem{claim}[theorem]{Claim}
\title{The dichotomy of Nikodym sets and local smoothing estimates for wave equations}
\author{Mingfeng Chen, Shaoming Guo}
\date{}
\begin{document}
\maketitle

\begin{abstract}
Let $\gamma: \R^2\to \R$ be an analytic function. For $\epsilon>0$, define the maximal operator 
\begin{equation}
\mc{M}_{\gamma, \epsilon}f(x, y):=\sup_{|v|\le \epsilon}
\anorm{
\int_{-\epsilon}^{\epsilon} f(x-\theta, y-\gamma(v, \theta))d\theta
}.
\end{equation}
\begin{enumerate}
\item We introduce a strongly degenerate condition for $\gamma$ and show that $\gamma$ is not strongly degenerate if and only if there exists $\epsilon>0$ such that 
\begin{equation}
\norm{
\mc{M}_{\gamma, \epsilon} f
}_{L^p(\R^2)}\le C_{\gamma, p} \norm{f}_{L^p(\R^2)}
\end{equation}
for some $p<\infty$ and some constant $C_{\gamma, p}<\infty$ depending only on $\gamma$ and $p$. 
\item 
For every $\gamma$ that is not strongly degenerate, we provide an algorithm that outputs a critical exponent $p_{\gamma}\in [2, \infty)$, and show that $\mc{M}_{\gamma, \epsilon}$ is bounded on $L^p(\R^2)$ whenever $p>p_{\gamma}$. Moreover, this exponent $p_{\gamma}$ is sharp in the sense that $\mc{M}_{\gamma, \epsilon}$ is  unbounded for every $p< p_{\gamma}$. Key ingredients in the proof are local smoothing estimates for linear wave equations in $\R^3$, due to Mockenhaupt, Seeger and Sogge \cite{MSS92}. 
\item For every $\gamma$ that is strongly degenerate, there exists a Nikodym set for $\gamma$, constructed by Chang and Cs\"ornyei \cite{CC16}, that can be used to show that $\mc{M}_{\gamma, \epsilon}$ fails to be bounded on $L^p(\R^2)$ for every $p<\infty$. 
\end{enumerate}
In other words, we show that Nikodym sets and local smoothing estimates for linear wave equations form a dichotomy: If Nikodym sets for $\gamma$ exist, then $\mc{M}_{\gamma, \epsilon}$ is not bounded on $L^p(\R^2)$ for any $p<\infty$; if Nikodym sets for $\gamma$ do not exist, then local smoothing estimates hold, and $\mc{M}_{\gamma, \epsilon}$ is  bounded on $L^p(\R^2)$ for some $p<\infty$. 
\end{abstract}

\setcounter{tocdepth}{2}
\tableofcontents

\section{Introduction}

Let $\gamma$ be an analytic function depending on two variables. We will consider the averaging operator 
\begin{equation}
    T_{\gamma}f(v; x,y):=\int_{\R} f(x-\theta, y-\gamma(v, \theta))a(v, \theta)d\theta
\end{equation}
where $a: \R^2\to \R$ is a smooth function supported in a sufficiently small neighborhood of the origin and the size of the neighborhood is allowed to depend on $\gamma$. Define the corresponding maximal operator:
\begin{align}
    \mathcal{M}_{\gamma}f(x,y):=\sup_{v\in \R}|T_{\gamma} f(v; x, y)|. 
\end{align}
We are interested in the $L^p$-bounds for the maximal operator. More precisely, 
\begin{enumerate}
\item[(1)] We introduce a strongly degenerate condition for $\gamma$ (see Definition \ref{231122defi1_5}), and show that $\mc{M}_{\gamma}$ is bounded on $L^p(\R^2)$ for some $p<\infty$ if and only if $\gamma$ is not strongly degenerate;
\item[(2)] For each $\gamma$ that is not strongly degenerate, we provide an algorithm that outputs a critical exponent $p_{\gamma}\in [2, \infty)$, and show that $\mc{M}_{\gamma}$ is bounded on $L^p(\R^2)$ whenever $p>p_{\gamma}$, and unbounded whenever $p<p_{\gamma}$. 
\end{enumerate}

By translations and shearing transformations, we can without loss of generality assume that 
\begin{equation}\label{231122e1_3kk}
\gamma(0, 0)=0, \ \ \partial_{\theta} \gamma(0, 0)=0.
\end{equation}
To avoid trivial discussions, we also assume that $\gamma$ is neither constant in $v$ nor constant in $\theta$. In other words, we assume that there exists $\mf{p}>0$ and $\mf{q}>0$ such that 
\begin{equation}\label{231122e1_4kk}
c_{\mf{p}, \mf{q}} \neq 0,
\end{equation}
where 
\begin{equation}
c_{\mf{p}, \mf{q}}:=\frac{1}{\mf{p} ! \mf{q} !} \partial_{v}^{\mf{p}} \partial_{\theta}^{\mf{q}} \gamma(0,0).
\end{equation}
The notation $c_{\mf{p}, \mf{q}}$ will be used throughout the paper. \\

We introduce several basic notions, see for instance Varchenko \cite{Var76}. The \underline{Taylor support} of $\gamma(v, \theta)$ at the origin is defined to be 
\begin{equation}
\mathcal{T}(\gamma):=\left\{(\mfp, \mfq) \in \mathbb{N}^2: \frac{1}{\mfp ! \mfq !} \partial_{v}^{\mfp} \partial_{\theta}^{\mfq} \gamma(0,0) \neq 0\right\}.
\end{equation}
The \underline{Newton polyhedron} $\mathcal{N}(\gamma)$ of $\gamma$ at the origin is defined to be the convex hull of the union of all the quadrants
\begin{equation}
(\mfp, \mfq)+\mathbb{R}_{+}^2
\end{equation}
 in $\mathbb{R}^2$, with $(\mfp, \mfq) \in \mathcal{T}(\gamma)$. The associated \underline{Newton diagram} $\mathcal{N}_d(\gamma)$ is the union of all compact edges of the Newton polyhedron. The \underline{reduced Newton polyhedron} $\rn(\gamma)$ of $\gamma$ at the origin is defined to be the convex hull of the union of all the quadrants $$(\mfp, \mfq)+\mathbb{R}_{+}^2$$ in $\mathbb{R}^2$, with $(\mfp, \mfq) \in \mathcal{T}(\gamma)$ and $\mfq\ge 1$. Similarly, we define the \underline{reduced Newton} \underline{diagram} $\rn_d(\gamma)$.  \\

    We define the cinematic curvature of $\gamma$ at $(v, \theta)$ to be 
    \begin{align}
        \cine(\gamma)(v, \theta):=\det \begin{bmatrix}
            \gamma_{\theta\theta} & \gamma_{\theta\theta\theta} \\
            \gamma_{v \theta} & \gamma_{v\theta\theta}
        \end{bmatrix}.
    \end{align}
Cinematic curvatures are defined in Sogge \cite{Sog91} for more general $\gamma$ that is allowed to depend also on $x, y$. Here we are considering a special case where $\gamma$ is translation-invariant, that is, independent of $x, y$.

 \subsection{Strongly degenerate condition}

\begin{definition}\label{231122defi1_5}
Let $\gamma$ be an analytic function satisfying \eqref{231122e1_3kk} and \eqref{231122e1_4kk}. We say that $\gamma$ is strongly degenerate at the origin if at least one of the following three conditions holds: 
\begin{enumerate}
\item[(1)] There exists $\epsilon>0$ such that 
\begin{equation}
\cine(\gamma)(v, \theta)=0, \ \ \forall (v, \theta)\in \B_{\epsilon},
\end{equation}
where $\B_{\epsilon}\subset \R^2$ is the ball of radius $\epsilon$ centered at the origin.
\item[(2)] The reduced Newton diagram of $\gamma$ consists of only one vertex $(\mfp_0, 1)$, with $\mfp_0\ge 1$, and there exists $\epsilon>0$ such that 
\begin{equation}
\cine(\widetilde{\gamma})(v, \theta)=0, \ \ \forall (v, \theta)\in \B_{\epsilon},
\end{equation}
where 
\begin{equation}\label{231122e1_8}
\widetilde{\gamma}(v, \theta):=
\sum_{\mfq\ge 1: (\mfp_0, \mfq)\in \mc{T}(\gamma)} 
c_{\mf{p}, \mf{q}} v^{\mfp_0} \theta^{\mfq}.
\end{equation}
\item[(3)] The function $\gamma(v, \theta)$ is of the form 
\begin{equation}
    \gamma(v, \theta)= c_{\mf{p}, 0} v^{\mfp}+ \widetilde{\gamma}(v, \theta), 
\end{equation}
where $c_{\mf{p}, 0}\neq 0$, $\mfp\ge 1$ and 
\begin{equation}
    |\widetilde{\gamma}(v, \theta)|=O(|v|^{\mfp+1}).
\end{equation} 
\end{enumerate}
\end{definition}

Now we state our first main theorem. To avoid trivial $\gamma$, we let $\gamma$ be an analytic function satisfying the assumptions \eqref{231122e1_3kk} and \eqref{231122e1_4kk}. 

\begin{theorem}\label{240127theorem1_2}
Let $\gamma(v, \theta)$ be an analytic function near the origin. Then $\gamma$ is not strongly degenerate at the origin if and only if there exists $\epsilon>0$ such that 
\begin{equation}\label{1.10}
\norm{
\mc{M}_{\gamma} f
}_{
L^p(\R^2)
} \lesim_{\gamma, a, p} \norm{f}_{L^p(\R^2)},
\end{equation}
for some $p<\infty$, and for all smooth functions $a$ supported in $\B_{\epsilon}$. 
\end{theorem}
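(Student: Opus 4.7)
The theorem is a biconditional, and I would argue the two directions by essentially independent methods.

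\emph{Unboundedness assuming strong degeneracy.} I would treat the three cases of Definition \ref{231122defi1_5} separately. Case (3) is the most elementary. When $\gamma(v,\theta)=c_{\mfp,0}v^{\mfp}+\widetilde\gamma(v,\theta)$ with $\widetilde\gamma(v,\theta)=O(|v|^{\mfp+1})$, the curves $\theta\mapsto(x-\theta,y-\gamma(v,\theta))$ are, modulo a $\delta$-thickening, horizontal segments at variable height $y-c_{\mfp,0}v^{\mfp}$. Testing $\mc{M}_\gamma$ on $f=\chi_{\R\times[-\delta,\delta]}$, the solvability of $c_{\mfp,0}v^{\mfp}=y$ for $|v|\lesssim\delta^{1/(\mfp+1)}$ produces a set of measure $\gtrsim\delta^{\mfp/(\mfp+1)}$ on which $\mc{M}_\gamma f\gtrsim 1$, while $\norm{f}_p\sim\delta^{1/p}$; the ratio blows up as $\delta\to 0$, ruling out every finite $p$. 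For cases (1) and (2), where $\cine(\gamma)$ (respectively $\cine(\widetilde\gamma)$ for $\widetilde\gamma$ as in \eqref{231122e1_8}) vanishes identically near the origin, I would invoke the Nikodym-type construction of Chang--Cs\"ornyei \cite{CC16}: a set $E$ of measure zero through each point of which passes a curve of the family $\{\theta\mapsto(x-\theta,y-\gamma(v,\theta))\}_{v}$. A $\delta$-thickening of $E$ has arbitrarily small $L^p$ norm but its indicator is mapped by $\mc{M}_\gamma$ to a function $\gtrsim 1$ on a set of positive measure, killing every finite $p$.

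\emph{Boundedness assuming non-degeneracy.} I would reduce the maximal inequality to local smoothing for the Fourier integral operator with phase $(x-\theta)\xi+(y-\gamma(v,\theta))\eta$, using Sobolev embedding in the parameter $v$ to pass from a local smoothing estimate with positive Sobolev gain to an $L^p$ bound for the supremum over $v$. The essential analytic input is Mockenhaupt--Seeger--Sogge \cite{MSS92}: if the curve family has non-degenerate cinematic curvature on the support of the amplitude, then a local smoothing estimate with positive Sobolev gain holds for some $p<\infty$. To apply MSS, I would perform a dyadic, quasi-homogeneous decomposition of a small neighborhood of the origin adapted to the edges of the reduced Newton diagram $\rn_d(\gamma)$; on each piece a rescaling produces a normalized model $\gamma_j$ supported on a unit box. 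The negation of conditions (1) and (2) of Definition \ref{231122defi1_5} is precisely what ensures that every rescaled model $\gamma_j$ has cinematic curvature bounded away from zero on a unit-size set, so MSS applies piece by piece. A geometric summation over dyadic scales then yields the global bound for $p$ sufficiently large.

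The main obstacle is the scale-by-scale execution: one must specify the correct quasi-homogeneous rescaling along each edge of $\rn_d(\gamma)$, verify that the rescaled cinematic curvature $\cine(\gamma_j)$ is uniformly non-degenerate (the vertex case of condition (2) is the delicate one, since only the homogeneous part $\widetilde\gamma$ from \eqref{231122e1_8} contributes to $\cine$ at leading order), and check that the losses from MSS local smoothing accumulated across scales form a convergent geometric series in some $L^p$. Since the theorem only asks for the \emph{existence} of a finite $p$ and not the sharp critical exponent $p_\gamma$ of part (2) of the introduction, the sum over scales can be closed by taking $p$ large enough, at the cost of a suboptimal exponent.
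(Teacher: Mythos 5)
Your high-level skeleton matches the paper's (Chang--Cs\"ornyei for degenerate cases, local smoothing plus Newton diagram decomposition for the non-degenerate case), and your scaling argument for case (3) of Definition \ref{231122defi1_5} is essentially correct. However there are two genuine gaps, one in each direction. In the unboundedness direction, you treat ``$\cine(\gamma)\equiv 0$ (or $\cine(\widetilde\gamma)\equiv 0$) $\Rightarrow$ Nikodym set exists'' as if the vanishing of cinematic curvature directly licenses the Chang--Cs\"ornyei construction. It does not: their Theorem produces a Nikodym set only for a family of curves that are \emph{translates of a single rectifiable curve $E$ along a rectifiable curve $\Gamma$}. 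To put $\gamma$ into this form one must first prove the structural result that $\cine(\gamma)\equiv 0$ forces $\gamma(v,\theta)=h_1(v)+h_2(\theta+w(v))$ (or the linear-in-$\theta$ form $h_1(v)\theta+h_0(v)$) near the origin --- the paper's Lemma \ref{231123lemma2_4}, whose proof is a real step involving Wronskians, Bocher's linear dependence theorem for analytic functions, and Cauchy--Kovalevskaya. For case (2) you additionally need the classification that $\widetilde\gamma$ is $c_1 v^a\theta$ or $c_2 v^a(e^{c_3\theta}-1)$ (Lemma \ref{231122lemma2_1}), and for the exponential form you must rescale and pass to a limit to remove the higher-order $O(|v|^{a+1}|\theta|)$ tail before the curves become genuine translates and Chang--Cs\"ornyei applies.

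In the boundedness direction, the claim that ``the negation of conditions (1) and (2) is precisely what ensures that every rescaled model $\gamma_j$ has cinematic curvature bounded away from zero on a unit-size set'' is false in the stated generality, and this is the heart of the difficulty. A single quasi-homogeneous rescaling adapted to $\rn_d(\gamma)$ handles only the vertices $(\mfp_i,\mfq_i)$ with $\mfp_i\ge 1$, $\mfq_i\ge 2$; on edge regions, after rescaling, $\cine(\gamma_j)$ still vanishes at the real zeros of the one-variable polynomial $\kappa_{\mf{E}_i}(r)$, and near vertices with $\mfp_i=0$ or $\mfq_i=1$ the leading monomial has identically zero cinematic curvature and one must go deeper. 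The paper resolves this by iterating --- applying Greenblatt's resolution of singularities (Theorem \ref{231120theorem3_3}) directly to $\cine(\gamma)$, shearing along the real roots, and tracking quantitative MSS losses (Claim \ref{231130claim4_4}) against Van der Corput and planar Kakeya-type $L^2$ bounds (Zahl). ``Taking $p$ large'' does not close the geometric series on its own, because without the iteration there is no uniform lower bound on the rescaled cinematic curvature to feed into MSS, and the loss can be superexponential in the dyadic scale.
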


If $\gamma$ is a function given by item (3) in Definition \ref{231122defi1_5}, then by a simple scaling argument in the $v$ variable, it is elementary to see that $\mc{M}_{\gamma}$ is not bounded on $L^p(\R^2)$ for every $p<\infty$. For functions in item (1) and item (2), to show that $\mc{M}_{\gamma}$ is not bounded on $L^p(\R^2)$ for every $p<\infty$, 
we will need Nikodym sets constructed in \cite{CC16}. \\

Let us also make a remark regarding the ``only if" part of the theorem. The proof of \eqref{1.10} must respect all the strongly degenerate examples in Definition \ref{231122defi1_5}. Moreover, the more strongly degenerate examples we have, the more complicated the proof of \eqref{1.10} will be; the more classes of strongly degenerate examples we have, the more cases we will need to discuss separately in the proof of Theorem \ref{240127theorem1_2}. This gives an excuse to the case distinctions in the proof of Theorem \ref{240127theorem1_2} (see for instance  Section \ref{240124section4}). \\

In the end of this subsection, let us give several concrete examples that are strongly degenerate. 
\begin{enumerate}
    \item Take 
    \begin{equation}\label{240125e1_15}
        \gamma_1(v, \theta)=v\theta,
    \end{equation}
    and 
    \begin{equation}\label{240125e1_16}
        \gamma_2(v, \theta)=
        v(e^{\theta}-1).
    \end{equation}
    Note that the cinematic curvatures of $\gamma_1$ and $\gamma_2$ both vanish constantly, and therefore they are strongly degenerate. 

        \item 
    Take 
    \begin{align}\label{240125e1_18}
        \gamma_1(v,\theta)=v\theta+v^2\theta^2,
    \end{align}
    and \begin{equation}\label{240125e1_17}
        \gamma_2(v, \theta)=
        v(e^{\theta}-1)+ v^2\theta.
    \end{equation}
    Note that neither the cinematic curvature of $\gamma_1$ nor the cinematic curvature of $\gamma_2$ is constantly zero. However, the reduced Newton diagrams of both $\gamma_1$ and $\gamma_2$ consist of only one vertex $(1, 1)$, the cinematic curvatures for 
    $v\theta$ and 
    $v(e^{\theta}-1)$ both vanish constantly, and therefore $\gamma_1$ and $\gamma_2$ are still strongly degenerate.

\end{enumerate}

We will see later in Lemma \ref{231122lemma2_1} that the examples \eqref{240125e1_15}--\eqref{240125e1_17} are essentially the only examples for item (2) in Definition \ref{231122defi1_5}. Characterizing item (1) in Definition \ref{231122defi1_5} is much more complicated, and this is partially done in Lemma \ref{231123lemma2_4}.

\subsection{Vertical Newton distance}\label{240129subsection1_2}

The goal of this subsection is to introduce an algorithm that outputs, for each $\gamma$ that is not strongly degenerate, a sharp exponent  $p_{\gamma}\in [2, \infty)$ for the $L^p$ bounds of $\mc{M}_{\gamma}$.  Let $\gamma$ be an analytic function satisfying the assumptions \eqref{231122e1_3kk} and \eqref{231122e1_4kk}. We start by introducing a few relevant notions. \\

Let $(\mfp, \mfq)$ be a vertex in $\mc{R}\mc{N}_{d}(\gamma)$. 
A line on $\R^2$ is said to be \underline{vertex-tangent} to $\gamma$ at $(\mfp, \mfq)$ if it passes through  $(\mfp, \mfq)$ and does not pass through any other point in the reduced Newton polyhedron.  

We use $\mf{L}_{(
\mf{p}, \mf{q}
)}(\gamma)$ to denote the collection of all these lines. If it is clear from the context which $\gamma$ is involved, we often abbreviate $\mf{L}_{(
\mf{p}, \mf{q}
)}(\gamma)$ to $\mf{L}_{(
\mf{p}, \mf{q}
)}$.

Given a line 
\begin{equation}
\mf{L}=\{(x, y): a x+ by=c\}, \text{ with } a>0, b>0, c>0,
\end{equation}
and a point $(\mf{p}, 0)$ with $\mf{p}>0$, we define their \underline{vertical distance} to be 
\begin{equation}
\mf{d}(
\mf{L}, (\mf{p}, 0)
):=
\begin{cases}
0 \hfill & \hfill \text{ if }  a\mf{p}\ge c;\\
\frac{c-a\mf{p}}{b} \hfill
& \hfill \text{ if } a\mf{p}< c.
\end{cases}
\end{equation}
In particular, if $\mf{p}=+\infty$, then we make a convention that 
\begin{equation}
        \mf{d}(
    \mf{L}, (\mf{p}, 0)
    )=0,
\end{equation}
for every $\mc{L}$. 

\begin{definition}[Vertical Newton distance]\label{vertical_newton}
Assume $\gamma$ is not strongly degenerate. 
 Let $\mfp_0$ be the smallest $\mfp$ such that $c_{\mfp, 0}\neq 0$. If such $\mfp$ does not exist, that is, $c_{\mfp, 0}=0$ for all $\mfp$, then we write $\mfp_0=\infty$. Define 
\begin{equation}
\mf{d}(\gamma):=
\sup_{
(\mf{p}, \mf{q}): \mathrm{vertex} \mathrm{\ of\ } \mc{R}\mc{N}_d(\gamma)
}
\sup_{
\mf{L}\in \mf{L}_{(\mf{p}, \mf{q})}
}
\mf{d}(
\mf{L}, (
\mf{p}_0, 0
)
).
\end{equation}
Take $(\mf{p}, \mf{q})\in \mc{R}\mc{N}_d(\gamma)$. Define  
\begin{equation}\label{240124e1_18}
\mf{d}_{>(\mf{p}, \mf{q})}(\gamma):=
\sup_{
\substack{
(\mf{p}', \mf{q}'): \mathrm{vertex} \mathrm{\ of\ } \mc{R}\mc{N}_d(\gamma)\\
\mf{p}'> \mf{p}
}
}
\sup_{
\mf{L}\in \mf{L}_{(\mf{p}', \mf{q}')}
}
\mf{d}(
\mf{L}, (
\mf{p}_0, 0
)
).
\end{equation}
If the sup in \eqref{240124e1_18} is taken over an empty set, that is, there is no vertex $(\mf{p}', \mf{q}')\in \mc{R}\mc{N}_d(\gamma)$ with $\mf{p}'> \mf{p}$, then we define\footnote{This convention will be important in Subsection \ref{240125subsection5_3}.}
\begin{equation}\label{240125e1_19}
    \mf{d}_{>(\mf{p}, \mf{q})}(\gamma):=
    \begin{cases}
        \mf{q}, & \text{ if } \mf{p}_0< \infty;\\
        0, & \text{ if } \mf{p}_0=\infty. 
    \end{cases}
\end{equation}
\end{definition}

The definition of the vertical Newton distance can be better explained in a picture. 

\includegraphics[scale=0.6]{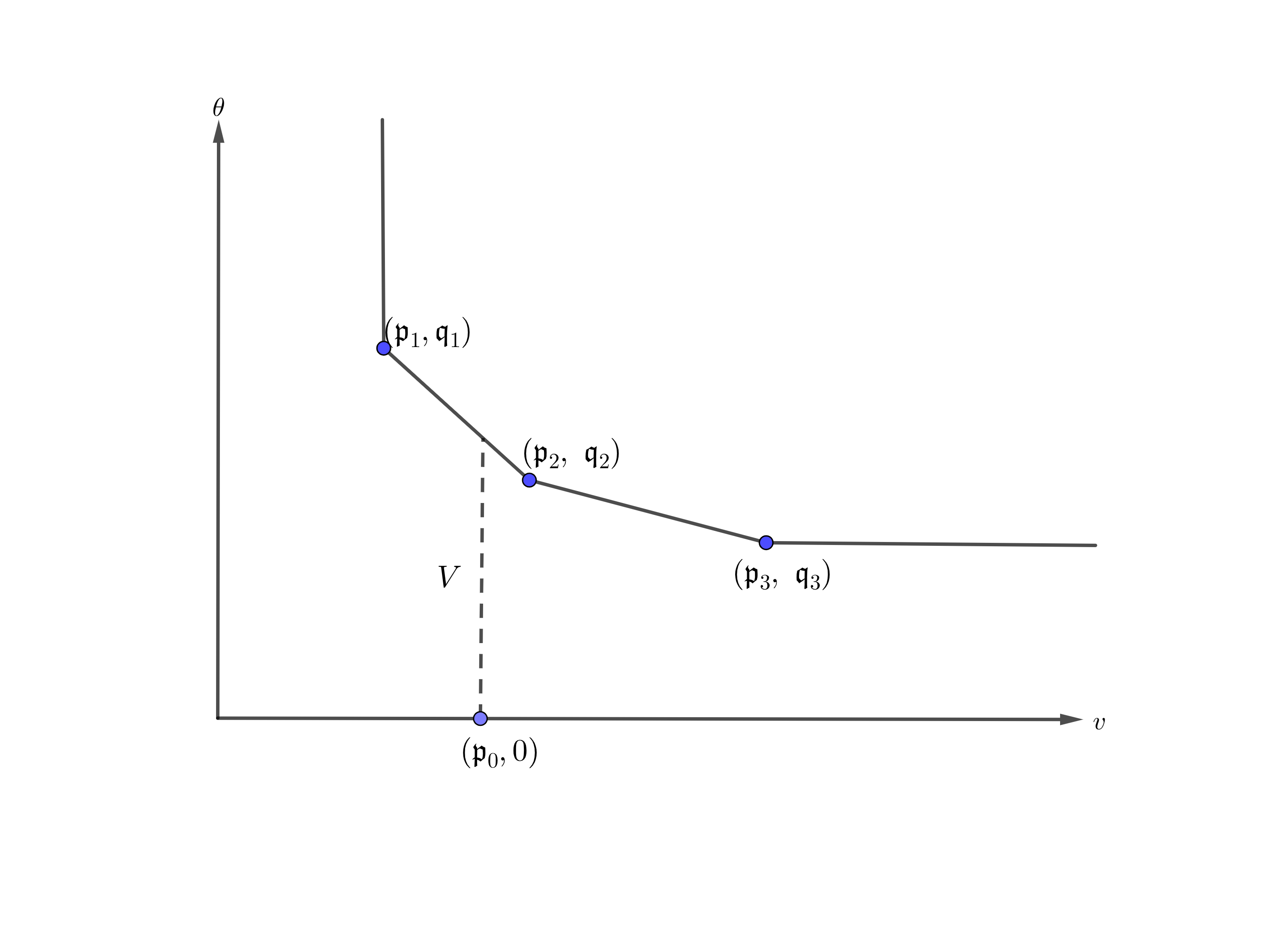}

In the picture above, the reduced Newton diagram has three vertices $(\mf{p}_i, \mf{q}_i), i=1, 2, 3$. The dashed vertical line segment is called $V$, and its length $|V|$ is the vertical Newton distance $\mf{d}(\gamma)$. The reason for introducing the form in Definition \ref{vertical_newton} will become clear later when we try to prove sharp $L^p(\R^2)$ bounds for maximal operators. \\

Now we are ready to determine the sharp exponent $p_{\gamma}\ge 2$. We initialize 
\begin{equation}\label{240129e1_24kk}
    \mf{D}_{\gamma}:= \mf{d}(\gamma).
\end{equation}
The value of $\mf{D}_{\gamma}$ will keep being updated as we run our algorithm. We label all the vertices of the reduced Newton diagram $\mc{R}\mc{N}_d(\gamma)$ by 
\begin{equation}
    (
    \mf{p}_1, \mf{q}_1
    ), (
    \mf{p}_2, \mf{q}_2
    ), \dots
\end{equation}
with 
\begin{equation}
    \mf{p}_1< \mf{p}_2< \dots
\end{equation}
Let $\mf{E}_i$ denote the edge of $\mc{R}\mc{N}_d(\gamma)$ connecting the vertices $(
    \mf{p}_i, \mf{q}_i
    ), (
    \mf{p}_{i+1}, \mf{q}_{i+1}
    )$. 
For later use, we express the line containing $\mf{E}_i$ as 
\begin{equation}\label{240129e1_27kk}
    \{(x, y):
    a_{\mf{E}_i} x+ b_{\mf{E}_i} y= c_{\mf{E}_i}
    \},
\end{equation}
where 
\begin{equation}
    a_{\mf{E}_i}>0, \ b_{\mf{E}_i}>0, \ c_{\mf{E}_i}>0.
\end{equation}
We consider all the edges $\mf{E}_1, \mf{E}_2, \dots$ one after another. Suppose we are currently considering the edge $\mf{E}_i$. Denote 
\begin{equation}
    \mf{m}_i:=
    \frac{
    \mf{p}_i-\mf{p}_{i+1}
    }{
    \mf{q}_{i+1}-\mf{q}_i
    }.
\end{equation}
Denote 
\begin{equation}
    \kappa_{
    \mf{E}_i
    }(v, \theta):=
    \sum_{
    (\mf{p}, \mf{q})\in \mf{E}_i, \mf{q}\ge 2
    }c_{
    \mf{p}, \mf{q}
    }
    \mf{q}(\mf{q}-1) v^{\mf{p}}\theta^{\mf{q}-2},
\end{equation}
which is a mixed-homogeneous polynomial. 
Note that $ \kappa_{
    \mf{E}_i
    }(v, \theta)$ is the second derivative in the $\theta$ variable of the mixed-homogeneous polynomial 
    \begin{equation}
    \gamma_{
    \mf{E}_i
    }(v, \theta):=
    \sum_{
    (\mf{p}, \mf{q})\in \mf{E}_i
    }c_{
    \mf{p}, \mf{q}
    }
     v^{\mf{p}}\theta^{\mf{q}}.
\end{equation}
We introduce a variable $r$, and write 
\begin{equation}
    \kappa_{\mf{E}_i}(v, 
    r v^{
    \mf{m}_i
    }
    )
    =:
    v^{\mf{e}_i}
    \kappa_{\mf{E}_i}(r),
\end{equation}
where 
\begin{equation}\label{240129e1_30pp}
    \mf{e}_i:= \mf{p}+ (\mf{q}-2) \mf{m}_i,
\end{equation}
and in \eqref{240129e1_30pp}, the point $(\mf{p}, \mf{q})$ can be taken to be an arbitrary point from $\mf{E}_i$. We label all the non-zero distinct real roots of the polynomial $\kappa_{\mf{E}_i}(r)$ by 
\begin{equation}\label{240129e1_31pp}
    \mf{r}_{i, 1}, \mf{r}_{i, 2}, \dots
\end{equation}
Consider all the roots in \eqref{240129e1_31pp} one after another. Suppose we are currently considering the root $\mf{r}_{i, j}$. We apply the change of variable
\begin{equation}
    v\to v, \ \ \theta\to \theta+ \mf{r}_{i, j} v^{
    \mf{m}_i
    },
\end{equation}
and obtain 
\begin{equation}
    \gamma_{\star}(v, \theta):=
    \gamma(v, 
    \theta+
    \mf{r}_{i, j} v^{
    \mf{m}_i
    }
    ). 
\end{equation}
Consider the reduced Newton diagram for $\gamma_{\star}$. To make it easier to think of the reduced Newton diagram $\mc{R}\mc{N}_d(\gamma_{\star})$, we mention several of its simple properties. For instance, the vertex $(\mf{p}_i, \mf{q}_i)$ is still a vertex in $\mc{R}\mc{N}_d(\gamma_{\star})$; for every vertex $(\mf{p}, \mf{q})\in \mc{R}\mc{N}_d(\gamma_{\star})$, we always have 
\begin{equation}
    a_{
    \mf{E}_i
    } \mf{p}
    + b_{
    \mf{E}_i
    }
    \mf{q}\ge c_{
    \mf{E}_i
    },
\end{equation}
that is, $(\mf{p}, \mf{q})$ never lies to the lower left of the line containing $\mf{E}_i$. The proofs for these two properties are simple and elementary, and details can be found in Lemma \ref{240124lemma5_1} and Lemma \ref{240124lemma5_2}. 

Now update 
\begin{equation}\label{240129e1_37kk}
    \mf{D}_{\gamma}:=
    \max\{
    \mf{D}_{\gamma}, 
    \mf{d}_{
    >(\mf{p}_i, \mf{q}_i)
    }(\gamma_{\star})
    \}.
\end{equation}
Next, we label all the vertices $(\mf{p}, \mf{q})\in \mc{R}\mc{N}_d(\gamma_{\star})$ with $\mf{p}> \mf{p}_i$ by 
\begin{equation}
    (
    \mf{p}_{\star+1}, \mf{q}_{\star+1}
    ), (
    \mf{p}_{\star+2}, \mf{q}_{\star+2}
    ), \dots 
\end{equation}
where 
\begin{equation}
    \mf{p}_{\star+1}< \mf{p}_{\star+2}< \dots
\end{equation}
Let $\mf{E}_{\star+\iota}$ be the edge connecting the vertices 
\begin{equation}\label{240129e1_40kk}
    (
    \mf{p}_{\star+\iota}, \mf{q}_{\star+\iota}
    ), (
    \mf{p}_{\star+\iota+1}, \mf{q}_{\star+\iota+1}
    ).
\end{equation}
Moreover, if the vertex $(
    \mf{p}_{\star+1}, \mf{q}_{\star+1}
    )$ does not lie on the line containing $\mf{E}_i$, then we let $\mf{E}_{\star+0}$ be the edge connecting the vertices 
    \begin{equation}
        (
        \mf{p}_{i}, \mf{q}_i
        ), (
    \mf{p}_{\star+1}, \mf{q}_{\star+1}
    ).
    \end{equation}
Note that the slope of the edge $\mf{E}_{\star+0}$ is always strictly smaller than the slope of the edge $\mf{E}_i$. For later use, let us denote 
\begin{equation}\label{240129e1_38kk}
    \gamma_{\star+0}(v, \theta):=
    \sum_{
    \substack{
    (\mf{p}, \mf{q})\in \mf{E}_{\star+0}
    }
    }
    \frac{1}{\mf{p}!\mf{q}!}
    \partial_v^{\mf{p}}
    \partial_{\theta}^{\mf{q}}\gamma_{\star}(0, 0)
    v^{\mf{p}}\theta^{\mf{q}}.
\end{equation}
For every edge $\mf{E}_{\star+\iota}, \iota=0, 1, 2, \dots$, we repeat the steps in \eqref{240129e1_27kk}-\eqref{240129e1_38kk}. The algorithm terminates if we run out of edges to consider.   \\

Readers familiar with resolutions of singularities (for instance Varchenko's paper \cite{Var76}) must have long realized that this is exactly what we are doing here. The precise argument above is taken from Xiao's paper \cite{Xia17}, and it is a variant of the algorithm in Greenblatt \cite{Gre04}. We will remark on the two slightly different versions of resolutions of singularities in \cite{Gre04} and \cite{Xia17} later; let us first finish the above algorithm. \\

It is proven in Xiao \cite{Xia17} (see Theorem 4.11 there) that we have exactly one of the following two scenarios. 
\begin{enumerate}
    \item[Scenario One.] The algorithm in \eqref{240129e1_24kk}-\eqref{240129e1_38kk} terminates after finitely many steps;
    \item[Scenario Two.] The algorithm does not terminate after finitely many steps. In this scenario, we know that starting from some stage we must always have
    \begin{equation}\label{240129e1_42kkk}
        \gamma_{\star+0}(v, \theta)=c_{\star} v^{b_{\star}}
        (\theta+ \mf{r}_n v^{\mf{w}_n})^{W}
        -
        c_{\star} v^{b_{\star}}
        (\mf{r}_n v^{\mf{w}_n})^{W}
        ,
    \end{equation}
    where $n=1, 2, \dots,$ $b_{\star}\ge 0, c_{\star}\neq 0, \ \mf{r}_{n}\neq 0, n=1, 2, \dots,$ 
    the exponents $\mf{w}_1, \mf{w}_2, \dots$ are non-zero positive rational numbers that can be written as fractions sharing a common denominator, and $W$ is a positive integer. 
\end{enumerate}
If we are in Scenario One, then $\mf{D}_{\gamma}$ is clearly defined. If we are in Scenario Two, instead of doing the change of variable
\begin{equation}
    \theta+ \mf{r}_n v^{
    \mf{w}_n
    }\to \theta, \ n=1, 2, \dots,
\end{equation}
we should do the above change of variables all at once, that is, 
\begin{equation}
    \theta+ \mf{r}_1v^{
    \mf{w}_1
    }+\mf{r}_2v^{
    \mf{w}_2
    }+\dots\to \theta,
\end{equation}
which forces the algorithm to terminate immediately. 
This finishes defining the critical exponent $p_{\gamma}$. Examples for the above algorithm are postponed to Section \ref{240222section3}.

\begin{remark}
    We remark on the two algorithms of resolutions of singularities in \cite{Xia17}
    and \cite{Gre04}. As mentioned above, the one in \cite{Xia17} is a variant of the one in \cite{Gre04}. A big advantage of the algorithm in \cite{Gre04} is that it always terminates after finitely many steps. If we use \eqref{240129e1_42kkk} to explain, then what \cite{Gre04} does is to do the change of variable 
    \begin{equation}\label{240129e1_45hhh}
        v\to v, \ \ \theta\to \theta+ \mf{r}_1 v^{\mf{w}_1}+ \mf{r}_2 v^{\mf{w}_2}+\dots
    \end{equation}
    directly, which terminates the algorithm immediately. However, \eqref{240129e1_45hhh} involves functions in $v$ that are not homogeneous, and the algorithm in \cite{Gre04} applies the implicit function theorems to detect such functions. The reason we apply Xiao's algorithm is that it involves only monomials in $v$, which makes it slightly easier to compute $p_{\gamma}$ for concrete examples. 

    Later when proving $L^p$ bounds for the maximal operator $\mc{M}_{\gamma}$, we will also apply resolutions of singularities to the cinematic curvature of $\gamma$. There we do not need to compute any exponent like $p_{\gamma}$ explicitly, and it is much more convenient to apply the algorithm in Greenblatt \cite{Gre04}, which is reviewed in Section \ref{240129section4kk}. 
\end{remark}

\begin{theorem}\label{240127theorem1_6}
    For every $\gamma$ that is not strongly degenerate, we let $p_{\gamma}\in [2, \infty)$ be the exponent given by the above algorithm. Then there exists $\epsilon>0$ such that 
    \begin{equation}\label{240128e1_47ppp}
        \norm{
\mc{M}_{\gamma} f
}_{
L^p(\R^2)
} \lesim_{\gamma, a, p} \norm{f}_{L^p(\R^2)},
    \end{equation}
    holds for all $p>p_{\gamma}$ and all smooth function $a$ supported on $\B_{\epsilon}$. Moreover, the exponent $p_{\gamma}$ is sharp. 
\end{theorem}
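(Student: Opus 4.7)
The plan for the upper bound is to combine a Greenblatt-style resolution of singularities of the cinematic curvature $\cine(\gamma)$ with the Mockenhaupt--Seeger--Sogge local smoothing estimate \cite{MSS92}. The resolution, applied to $\cine(\gamma)$ via the algorithm reviewed in Section \ref{240129section4kk}, decomposes a small neighborhood of the origin in $(v,\theta)$-space into curved trapezoidal regions, one per edge $\mf{E}_i$ of the reduced Newton diagram of $\gamma$, together with cuspidal regions around each nonzero real root $\mf{r}_{i,j}$ of the polynomial $\kappa_{\mf{E}_i}(r)$. On a cuspidal region I would apply the change of variable $\theta \mapsto \theta + \mf{r}_{i,j} v^{\mf{m}_i}$, which produces the function $\gamma_\star$ with a new reduced Newton diagram and lets us recurse. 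On a generic trapezoidal piece attached to $\mf{E}_i$, the mixed-homogeneous rescaling $v \mapsto \lambda^{b_{\mf{E}_i}} v$, $\theta \mapsto \lambda^{a_{\mf{E}_i}} \theta$, together with the matching rescaling of $y$ by $\lambda^{c_{\mf{E}_i}}$, renormalizes $T_\gamma$ to a localized averaging operator whose cinematic curvature is bounded below on its support.

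On each such renormalized piece I would invoke the circular-maximal framework of Sogge \cite{Sog91} powered by \cite{MSS92}: the lower bound on the cinematic curvature yields the cone-type rotational curvature needed for local smoothing, producing $L^p \to L^p$ bounds uniform in $\lambda$ for $p > 2$. Undoing the rescaling costs a Jacobian factor; summing the contributions over dyadic $\lambda$, convergence holds exactly when the resulting geometric series converges, and a bookkeeping calculation, in which the definition of the vertical distance $\mf{d}(\mf{L},(\mfp_0,0))$ is tailored to exactly match the loss from translating the $y$-rescaling by the pure monomial $c_{\mfp_0,0} v^{\mfp_0}$, identifies the summability threshold from $\mf{E}_i$ as being governed by $\mf{d}(\gamma)$. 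After the root change of variable, the same bookkeeping on $\gamma_\star$ replaces $\mf{d}(\gamma)$ by the contribution from vertices to the right of $(\mfp_i,\mfq_i)$, namely $\mf{d}_{>(\mfp_i,\mfq_i)}(\gamma_\star)$, which is precisely what the update \eqref{240129e1_37kk} records. Since $\gamma$ is not strongly degenerate, items (1) and (2) of Definition \ref{231122defi1_5} are excluded, so Xiao's termination result (Scenario One directly, or the simultaneous change of variables that collapses \eqref{240129e1_42kkk} in Scenario Two) guarantees a finite $\mf{D}_\gamma$ and hence a finite critical $p_\gamma$.

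For the sharpness direction, I would build a Knapp-type counterexample attached to the vertex-edge pair attaining the supremum defining $\mf{d}(\gamma)$. With $f$ chosen to be the indicator of a thin anisotropic box in $(x,y)$ whose dimensions are matched to the rescaling $\lambda^{b_{\mf{E}_i}} \times \lambda^{a_{\mf{E}_i}+\mf{d}(\gamma) b_{\mf{E}_i}}$ and whose vertical location is shifted by $c_{\mfp_0,0} v^{\mfp_0}$, one checks that for $|v| \sim \lambda^{b_{\mf{E}_i}}$ the curve $\theta \mapsto (x-\theta, y-\gamma(v,\theta))$ stays inside this box for $\theta$ in an interval of length comparable to the averaging scale, which forces $T_\gamma f \gtrsim 1$ on a much larger translated box; comparing volumes yields $p_\gamma$ as the sharp threshold. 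The main obstacle I foresee is the gluing step in the upper bound: one must verify that the specific rotational curvature hypothesis of \cite{MSS92}, not merely a pointwise lower bound on the cinematic curvature, is preserved under each mixed-homogeneous rescaling, and that the multiple recursive branches spawned by the roots $\mf{r}_{i,j}$ can be assembled without any hidden $\log$-losses. A secondary subtlety is handling Scenario Two cleanly, verifying that the simultaneous change of variables in \eqref{240129e1_45hhh} really terminates the recursion with only finitely many distinct scales contributing to $\mf{D}_\gamma$.
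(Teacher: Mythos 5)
There is a conceptual confusion in your plan that I want to flag first: the paper's primary decomposition of $\B_\epsilon$ is by the Newton diagram of $\gamma$ itself (the regions $\mf{O}_i$ and $\mf{O}'_i$ of Section \ref{240124section4}), and the recursion on bad regions is driven by the roots of $\kappa_{\mf{E}_i}(r)$, which comes from $\partial_\theta^2\gamma_{\mf{E}_i}$ restricted to the mixed-homogeneous curve $\theta=rv^{\mf{m}_i}$. You describe resolving $\cine(\gamma)$ and then say this decomposition produces one region per edge $\mf{E}_i$ of the reduced Newton diagram of $\gamma$ together with cusps at the roots of $\kappa_{\mf{E}_i}$ -- but the Greenblatt algorithm applied to $\cine(\gamma)$ resolves the Newton diagram of $\cine(\gamma)$, whose edges, slopes, and roots are generally different from those of $\gamma$ and of $\partial_\theta^2\gamma$. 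A concrete example: for $\gamma=v\theta^3+v^2\theta$, $\kappa_{\mf{E}_1}$ has no nonzero root so the paper's recursion terminates immediately, while $\cine(\gamma)=18v\theta^2-12v^2$ has nonzero roots $r=\pm\sqrt{2/3}$ along the $\theta\sim v^{1/2}$ edge, so your scheme would perform a change of variable that the paper never does. The paper does apply the Greenblatt resolution of $\cine(\gamma)$ -- but only as a secondary tool inside specific subcases (Subsection \ref{240107subsection4_2} when the dominating monomial is $\theta^{\mf{q}_i}$, and again inside the good edge regions) to localize where the cinematic curvature is comparable to a monomial.

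The deeper gap is that your assertion that the mixed-homogeneous rescaling produces a localized operator ``whose cinematic curvature is bounded below on its support'' is false precisely in the cases that drive the case analysis of Section \ref{240124section4}. When the dominating monomial has $\mf{p}_i=0$ (so it is $\theta^{\mf{q}_i}$), its cinematic curvature is identically zero, and after rescaling the cinematic curvature of $\gamma_{\bfj}$ decays like $2^{-a_0 j_1-b_0 j_2}$ (cf.\ \eqref{240107e4_133jj}); and when $\mf{q}_i=1$ the dominating monomial $v^{\mf{p}_i}\theta$ again has vanishing cinematic curvature, and the rescaled cinematic curvature behaves like $2^{\mf{p}_0 j_1-\mf{p}_i j_1-j_2}$ (cf.\ \eqref{240118e4_268}). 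In these regimes one cannot simply invoke \cite{MSS92}: the constant would blow up with the scale. The paper compensates by pairing the MSS gain with $L^2$ estimates that are \emph{uniform} in the (small) cinematic curvature -- Van der Corput's lemma when $\partial_\theta^2\gamma$ has a lower bound (case $\mf{q}_i\ge2$), and Zahl's planar maximal Kakeya inequality \cite{Zah23} when it does not (case $\mf{q}_i=1$, Claim \ref{240111claim4_4} and the modular splitting in Subsection \ref{240118subsection4_4}) -- and then interpolates. It is exactly this interpolation that controls the regime $k\lesssim_{p,\gamma}\max\{j_1,j_2\}$ where MSS alone is not enough, and it is in the $\mf{q}_i=1$ case that the non-strongly-degenerate hypothesis (item (2) of Definition \ref{231122defi1_5}) is actually used, via the representation \eqref{240125e4_222}, to ensure the rescaled cinematic curvature bound \eqref{240107e4_237} is uniform in $j_1$. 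Your sharpness outline is essentially the paper's argument in Section \ref{240222section3} and is fine, but the upper-bound plan as written would not yield the claimed $p_\gamma$ without supplying the $L^2$ ingredients and keeping the three Newton diagrams ($\gamma$, $\partial_\theta^2\gamma$, $\cine(\gamma)$) playing their distinct roles.
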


A few special (but important) cases of Theorem \ref{240127theorem1_6} were already obtained in the literature. The most well-known case is 
\begin{equation}\label{240127e1_46pp}
    \gamma(v, \theta)= v(1+\theta^2).
\end{equation}
In the breakthrough works \cite{Bou85} and \cite{Bou86}, Bourgain proved that $p_{\gamma}=2$. Bourgain used both geometrical and Fourier analytic methods. Later, Mockenhaupt, Seeger and Sogge \cite{MSS92} discovered local smoothing estimates for linear wave equations and gave a simple proof of Bourgain's result which uses only Fourier analysis. Moreover, Schlag \cite{Sch98} gave a purely geometric proof for Bourgain's theorem, and his method also yielded sharp $L^p\to L^q$ bounds for the maximal operator, see Schlag \cite{Sch97}. The results in \cite{Sch97} were recovered by Schlag and Sogge in a later paper \cite{SS97}. 

We would like to  emphasize that 
all the proofs of Bourgain \cite{Bou86}, Mockenhaupt, Seeger and Sogge \cite{MSS92} and Schlag \cite{Sch98} work well for perturbations of \eqref{240127e1_46pp}. More precisely, if we take 
\begin{equation}
    \gamma(v, \theta)=v(1+h_2(\theta)),
\end{equation}
and assume that the second order derivative of $h_2$ at the origin does not vanish, then we still have $p_{\gamma}=2$.

A very natural problem to consider after Bourgain's work is what happens if we allow the second order derivative of $h_2$ to vanish near the origin. This problem was resolved by Iosevich \cite{Ios94}, where he considered functions of the form similar to 
\begin{equation}\label{240127e1_48pp}
    \gamma(v, \theta)=v(1+\theta^k), \ k\in \N, k\ge 2,
\end{equation}
and proved that $p_{\gamma}=k$. Note that for $\gamma$ in \eqref{240127e1_48pp}, its cinematic curvature vanishes at $\theta=0$. 

More recently, Li \cite{Li18} studied the problem where the cinematic curvature is even more degenerate than that of Iosevich \cite{Ios94}.  One typical example considered in \cite{Li18} can be written as 
\begin{equation}
    \gamma(v, \theta)=
    \theta^2+ v \theta^{k}, \ \ k\ge 3,
\end{equation}
and Li \cite{Li18} showed that $p_{\gamma}=2$. Let us also emphasize that both the result in \cite{Ios94} and the result in \cite{Li18} are stable under perturbations, similarly to the result of Bourgain \cite{Bou86}. \\

Bourgain's results in \cite{Bou85} and \cite{Bou86} were also generalized to the variable coefficient setting, where the function $\gamma$ starts to depend also on $x$ and $y$. This setting includes maximal operators on Riemannian manifolds as special cases. We refer to Sogge \cite{Sog91} where the notion of cinematic curvatures first appeared, and Mockenhaupt, Seeger and Sogge \cite{MSS93} for a general theory of related Fourier integral operators. 

It may be very interesting to work out how our two main theorems, Theorem \ref{240127theorem1_2} and Theorem \ref{240127theorem1_6}, look like in the variable coefficient setting. Seeger \cite{See93} and \cite{See98} proved sharp Sobolev estimates for fixed time (in our notation, for fixed $v$ parameter) for general planar Fourier integral operators, and these will be crucial in establishing a version Theorem \ref{240127theorem1_6} in the variable coefficient setting. \\

Bourgain's results in \cite{Bou85} and \cite{Bou86} proved the non-existence of certain Nikodym sets. Recall that $E\subset [0, 1]^2$ is called a (classical) Nikodym set if the Lebesgue measure of $E$ is zero, and for every $\bfx\in [0, 1]^2\setminus E$, there exists a line $\mf{L}_{\bfx}$ satisfying 
\begin{equation}
    (\mf{L}_{\bfx}\setminus \{\bfx\})\cap E^c=\emptyset. 
\end{equation}
If we replace lines by circles in the definition of Nikodym sets, then Bourgain \cite{Bou85} and \cite{Bou86} and Marstrand \cite{Mar87} independently proved that Nikodym sets for circles do not exists.  Let $E\subset\R^2$ be a measurable set. Define 
\begin{equation}
    E_{\circ}:=\{\bfx\in \R^2: E \text{ contains a circle centered at } \bfx\}.
\end{equation}
If the Lebesgue measure of $E_{\circ}$ is positive, then $E$ itself must also have positive Lebesgue measure, in a strong contrast to the case of the classical Nikodym sets. 

We next discuss Nikodym sets for general $\gamma$. We say that $E\subset \R^2$ is a $\gamma$-Nikodym set if $E$ has Lebesgue measure zero and there exists $\epsilon>0$ depending on $\gamma$ such that $E_{\gamma, \epsilon}$ has a positive Lebesgue measure, where 
\begin{multline}
    E_{\gamma, \epsilon}:= \{
    \bfx\in \R^2: \exists v\in (-\epsilon, \epsilon) \text{ such that } \\
    \mc{L}^1(
    \{
    |\theta|< \epsilon: \bfx+ (\theta, \gamma(v, \theta))\in E
    \}
    )>0
    \}.
\end{multline}
Here $\mc{L}^1$ refers to the one-dimensional Lebesgue measure.  A direct corollary of Theorem \ref{240127theorem1_6} is as follows. 
\begin{corollary}
    If $\gamma$ is not strongly degenerate, then $\gamma$-Nikodym sets do not exist.
\end{corollary}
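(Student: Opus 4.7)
I will argue by contradiction. Suppose $E\subset\R^2$ is a $\gamma$-Nikodym set, so that $|E|=0$ but $|E_{\gamma,\epsilon_1}|>0$ for some $\epsilon_1>0$. The plan is to test the $L^p$ inequality of Theorem~\ref{240127theorem1_6} against the indicator $g_\delta:=\mathbf{1}_{-(E+B_\delta)}$ of a thin $\delta$-neighborhood of $-E$. Then $\|g_\delta\|_{L^p}\to 0$ as $\delta\to 0$, while for each $\bfx\in E_{\gamma,\epsilon_1}$ there exist $|v|<\epsilon_1$ and a set $S\subset(-\epsilon_1,\epsilon_1)$ of positive one-dimensional measure with $\bfx+(\theta,\gamma(v,\theta))\in E\subset E+B_\delta$ for $\theta\in S$, forcing $T_\gamma g_\delta(v;-\bfx)\gtrsim \mc{L}^1(S)$ uniformly in $\delta$, provided the amplitude $a(v,\theta)$ is positive on the relevant patch.

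The delicate point is that the radius $\epsilon$ furnished by Theorem~\ref{240127theorem1_6} may be smaller than $\epsilon_1$, so the patch above need not lie in the support of $a$, which is contained in $B_\epsilon$. I get around this by translating to a non-degenerate base point. Since clause~(1) of Definition~\ref{231122defi1_5} fails, $\cine(\gamma)$ is a nontrivial real-analytic function, hence nonzero on an open dense set. Combining this with a Fubini/pigeonhole in the definition of $E_{\gamma,\epsilon_1}$, I select $(v_0,\theta_0)$ with $\cine(\gamma)(v_0,\theta_0)\neq 0$, constants $\eta>0$ and $r>0$ with $r$ arbitrarily small, and a positive-measure subset $F\subset E_{\gamma,\epsilon_1}$ such that for every $\bfx\in F$ some $v$ with $|v-v_0|<r$ satisfies
\[
\mc{L}^1\bigl(\{\theta:|\theta-\theta_0|<r,\ \bfx+(\theta,\gamma(v,\theta))\in E\}\bigr)\ge \eta.
\]
Now consider the translate
\[
\tilde\gamma(v',\theta'):=\gamma(v_0+v',\theta_0+\theta')-\gamma(v_0,\theta_0)-\partial_\theta\gamma(v_0,\theta_0)\theta',
\]
which obeys the normalization \eqref{231122e1_3kk}. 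Since $\tilde\gamma$ differs from a translate of $\gamma$ by an affine function of $\theta'$, one has $\cine(\tilde\gamma)(0,0)=\cine(\gamma)(v_0,\theta_0)\neq 0$. A direct check of Definition~\ref{231122defi1_5} shows that each of its three clauses forces the cinematic curvature to vanish at the base point: clause~(1) tautologically; clauses~(2) and~(3) because in either case the expansion of $\gamma$ around the origin has $\gamma_{\theta\theta}(0,0)=\gamma_{\theta\theta\theta}(0,0)=0$, killing both terms of the determinant. Thus $\tilde\gamma$ is not strongly degenerate, and Theorem~\ref{240127theorem1_6} supplies a scale $\tilde\epsilon>0$ and an exponent $p>p_{\tilde\gamma}$ for which $\mc{M}_{\tilde\gamma}$ is bounded on $L^p(\R^2)$.

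Choosing $r<\tilde\epsilon$ from the start, letting $a$ be a smooth nonnegative bump supported in $B_{\tilde\epsilon}$ that equals $1$ on $B_r$, and performing the affine change of variables $(v,\theta)=(v_0+v',\theta_0+\theta')$ together with the translation in $(x,y)$ by $(\theta_0,\gamma(v_0,\theta_0))$ and a shearing that absorbs the linear-in-$\theta'$ term, $F$ maps to a set of the same positive measure on which $T_{\tilde\gamma}\tilde g_\delta(v';\cdot)\ge \eta$ for some $v'\in B_r$. Hence $\mc{M}_{\tilde\gamma}\tilde g_\delta\ge \eta$ on that set and $\|\mc{M}_{\tilde\gamma}\tilde g_\delta\|_{L^p}^p\ge \eta^p|F|$ independent of $\delta$, which combined with $\|\tilde g_\delta\|_{L^p}\to 0$ contradicts the $L^p$ boundedness of $\mc{M}_{\tilde\gamma}$. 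The principal technical obstacle is exactly this scale-matching step; the saving observation is that every clause of strong degeneracy implies vanishing of cinematic curvature at the base point, so a generic translate automatically falls into the non-degenerate regime where Theorem~\ref{240127theorem1_6} applies.
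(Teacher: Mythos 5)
Your argument is correct, and it is genuinely more careful than what the paper presents. The paper states the corollary as a ``direct consequence of Theorem~\ref{240127theorem1_6}'' and stops there, which implicitly assumes the scale $\epsilon_1$ at which $|E_{\gamma,\epsilon_1}|>0$ matches the scale $\epsilon$ supplied by Theorem~\ref{240127theorem1_6}. You are right that this is a real gap: $E_{\gamma,\epsilon'}$ is monotone increasing in $\epsilon'$, so positivity of $|E_{\gamma,\epsilon_1}|$ at a large $\epsilon_1$ carries no information at the (possibly much smaller) $\epsilon$ that Theorem~\ref{240127theorem1_6} furnishes. Your repair is the right one, and the crux of it --- that every clause of Definition~\ref{231122defi1_5} forces $\cine(\gamma)$ to vanish at the base point, so a translate $\tilde\gamma$ anchored at a point with $\cine(\gamma)(v_0,\theta_0)\neq 0$ is automatically not strongly degenerate --- checks out: clause~(1) trivially; clauses~(2) and~(3) both force the normalized function to vanish identically on $\{v=0\}$, hence $\tilde\gamma_{\theta\theta}(0,0)=\tilde\gamma_{\theta\theta\theta}(0,0)=0$. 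Note also that $\cine(\tilde\gamma)(0,0)\neq 0$ forces $c_{1,1}\neq 0$ or $c_{1,2}\neq 0$ for $\tilde\gamma$, so the standing non-constancy hypothesis \eqref{231122e1_4kk} is inherited as well, which you should state explicitly before invoking Theorem~\ref{240127theorem1_6}.

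Two details in the Fubini/pigeonhole step need tightening. First, to find a density point $\theta_\bfx$ with $\cine(\gamma)(v_\bfx,\theta_\bfx)\neq 0$, you need the chosen $v_\bfx$ to satisfy $\cine(\gamma)(v_\bfx,\cdot)\not\equiv_\theta 0$. This is not automatic: the definition of $E_{\gamma,\epsilon_1}$ only gives you \emph{some} admissible $v$ for each $\bfx$, and a priori it could always be an exceptional one. The fix is that, since clause~(1) fails, the exceptional $v$'s (those with $\cine(\gamma)(v,\cdot)\equiv_\theta 0$) form a discrete set, and for each fixed exceptional $v^*$, Fubini together with $|E|=0$ gives $\mc{L}^1(\{\theta: \bfx+(\theta,\gamma(v^*,\theta))\in E\})=0$ for a.e.\ $\bfx$; so discarding a null set of $\bfx$, every remaining $\bfx\in E_{\gamma,\epsilon_1}$ has an admissible non-exceptional $v$. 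Second, for $\|g_\delta\|_{L^p}\to 0$ you should first replace $E$ by its intersection with a large ball (a bounded piece of $E_{\gamma,\epsilon_1}$ of positive measure only reaches a bounded part of $E$), since a merely measure-zero, unbounded $E$ need not have $|E+B_\delta|\to 0$. With these additions the proof is complete and, unlike the paper's one-line justification, works for any Nikodym scale $\epsilon_1$.
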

Moreover, if $\gamma$ is strongly degenerate, then we will see in Section \ref{240128section2} that $\gamma$-Nikodym sets always exist and have been constructed in \cite{CC16}. 

When proving Theorem \ref{240127theorem1_6},  key ingredients are local smoothing estimates that generalize the ones in \cite{MSS92}. In this sense, we say that Nikodym sets and local smoothing estimates form a dichotomy.

\vspace{1cm}

\noindent {\bf Notation.} We collect some of the  notations that were used in the introduction and will be used in the rest of the paper. 
\begin{enumerate}
    \item We often use boldface letters to refer to vectors. For instance, we often write $\bfx=(x, y)$ and  $\bxi=(\xi, \eta)$.   
    \item When considering $v, \theta$ at the same time, the order we write is always $(v, \theta)$. We will look at dyadic decompositions, and we always use $2^{-j_1}$ for the scale of $v$ and $2^{-j_2}$ for the scale of $\theta$. We write $\bfj=(j_1, j_2)$. 
    \item We use $P_k$ to denote a Littlewood-Paley projection in the second variable to frequencies $\{(\xi, \eta): |\eta|\simeq 2^k\}$, and $P'_k$ in the first variable. The reason behind this choice of notation is the parametrization of the curves by $(\theta, \gamma(v, \theta))$, which makes the second variable a primary frequency direction to consider, and the first variable a secondary frequency direction to consider. We use $\bfP_{\bfk}$ for $P'_{k_1}P_{k_2}$, where $\bfk=(k_1, k_2)$. 
    \item For the function $\gamma(v, \theta)$, we are only concerned with $(v, \theta)$ in a small neighborhood of $(0, 0)$, unless otherwise specified. 
    \item For $\epsilon>0$, we let $\B_{\epsilon}\subset \R^2$ be the ball of radius $\epsilon$ centered at the origin. 
    \item Let $C>1$ and $I\subset \R$ be an interval. We use $C I$ to mean the interval of length $C|I|$ that has the same center as $I$.
    \item Through the paper, $c_{\mf{p}, \mf{q}}$ is always reserved for 
    \begin{equation}
    c_{\mfp, \mfq}:=
\frac{1}{\mfp!\mfq!}
\partial_v^{\mfp} \partial_{\theta}^{\mfq} 
\gamma(0, 0),
    \end{equation}
    where $\mf{p}, \mf{q}\in \N$. 
    \item For a function $a(x, y)$, we use $\supp(a)$ to denote its support. We will use $x\in \supp(a)$ to mean that there exists some $y$ such that $(x, y)\in \supp(a)$.  
    \item We use $A\lesim B$ to mean that $\exists C>0$, such that $A\leq CB$. $A\simeq B$ means $A\lesim B$ and $B\lesim A$.
    \item Unless otherwise specified, every implicit constant in this paper is allowed to depend on $\gamma$, which we often suppress from the notation. 
    \item For an analytic function $\gamma(v, \theta)$, we use $\gamma\equiv 0$ to mean that $\gamma$ is identically zero. Similarly, we use $\gamma\not\equiv 0$ to mean that $\gamma$ is not identically zero. Moreover, $\gamma(v, \cdot)\equiv_{\theta} 0$ means that for the given $v$, the function $\gamma(v, \theta)$ vanishes identically as a function of $\theta$. Similarly, we define $\gamma(v, \cdot)\not\equiv_{\theta} 0$. 
\end{enumerate}

\bigskip

\noindent {\bf Acknowledgements.} 
S.G. is partly supported by NSF-2044828. The authors would like to thank Jacob Denson for discussions related to cinematic curvatures, thank Shaozhen Xu for many helpful discussions related to resolutions of singularities, and thank Alan Chang for explaining to them the Nikodym sets constructed in \cite{CC16}. They also thank Andreas Seeger and Josh Zahl for sharing their thinkings on possible forms of the dichotomy of Nikodym sets and local smoothing estimates.

\section{Strongly degenerate conditions and Nikodym sets}\label{240128section2}

In this section, we will first characterize strongly degenerate functions, and then apply a result in \cite{CC16} to show that $\gamma$-Nikodym sets exist whenever $\gamma$ is strongly degenerate. In particular, this proves the ``if" part of Theorem \ref{240127theorem1_2}, that is, if $\gamma$ is strongly degenerate, then for every $\epsilon<0$ and every $p<\infty$, there exists a smooth amplitude function $a$ supported on $\B_{\epsilon}$ such that the estimate \eqref{1.10} fails.

\subsection{Strongly degenerate functions}

\begin{lemma}\label{231122lemma2_1}
Let $\gamma$ be an analytic function satisfying \eqref{231122e1_3kk} and  \eqref{231122e1_4kk}. If $\gamma$ satisfies (2) in Definition \ref{231122defi1_5}, then we either have 
 \begin{align}\label{231123e2_1}
        \widetilde{\gamma}(v,\theta)=c_1v^a\theta
    \end{align}
   for some $a\ge 1, a\in \N$ and $c_1\in \R$, or
    \begin{align}\label{231123e2_2}
    \widetilde{\gamma}(v,\theta)=c_2v^a(e^{c_3\theta}-1)
    \end{align}
    for some $a\ge 1, a\in \N$ and $c_2, c_3\in \R$. Here $\widetilde{\gamma}$ is defined in \eqref{231122e1_8}. 
\end{lemma}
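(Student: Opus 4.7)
The plan is to reduce the problem to a simple ODE in one variable, then solve it.

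First I would exploit the hypothesis that $\mc{R}\mc{N}_d(\gamma)$ consists of a single vertex $(\mfp_0, 1)$. By definition of the reduced Newton polyhedron, this forces $c_{\mfp, \mfq}=0$ whenever $\mfq\ge 1$ and $\mfp\ne \mfp_0$, so the sum defining $\widetilde{\gamma}$ in \eqref{231122e1_8} factors as
\begin{equation}
\widetilde{\gamma}(v,\theta)= v^{\mfp_0} h(\theta),\qquad h(\theta):=\sum_{\mfq\ge 1} c_{\mfp_0,\mfq}\,\theta^{\mfq},
\end{equation}
where $h$ is analytic near $0$ with $h(0)=0$ and $h'(0)=c_{\mfp_0,1}\ne 0$ (the latter because $(\mfp_0,1)$ is actually a vertex of the reduced diagram, hence $c_{\mfp_0,1}\ne 0$).

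Next I would insert this product form into the cinematic determinant. A direct computation of the entries gives $\widetilde{\gamma}_{\theta\theta}=v^{\mfp_0}h''$, $\widetilde{\gamma}_{\theta\theta\theta}=v^{\mfp_0}h'''$, $\widetilde{\gamma}_{v\theta}=\mfp_0 v^{\mfp_0-1}h'$ and $\widetilde{\gamma}_{v\theta\theta}=\mfp_0 v^{\mfp_0-1}h''$, so that
\begin{equation}
\cine(\widetilde{\gamma})(v,\theta)=\mfp_0\, v^{2\mfp_0-1}\bigl(h''(\theta)^2 - h'(\theta)h'''(\theta)\bigr).
\end{equation}
The hypothesis that $\cine(\widetilde{\gamma})$ vanishes on $\B_{\epsilon}$ therefore forces the one-variable identity $h''(\theta)^2= h'(\theta)h'''(\theta)$ for all $\theta$ in a neighborhood of $0$.

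Setting $g:=h'$, the identity becomes $(g')^2=g\,g''$. Since $g(0)=h'(0)\ne 0$, the function $g$ is non-vanishing in a neighborhood of $0$, and there we may divide by $g^2$ to obtain
\begin{equation}
\Bigl(\frac{g'}{g}\Bigr)' = \frac{g''g-(g')^2}{g^2}=0,
\end{equation}
so $g'/g$ is a constant, call it $c_3$. Integrating gives $g(\theta)=Ae^{c_3\theta}$ for some $A\ne 0$, and then integrating once more and using $h(0)=0$ yields $h(\theta)=A\theta$ when $c_3=0$, and $h(\theta)=(A/c_3)(e^{c_3\theta}-1)$ when $c_3\ne 0$. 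Multiplying by $v^{\mfp_0}$ and setting $a=\mfp_0$ gives precisely the two forms \eqref{231123e2_1} and \eqref{231123e2_2}.

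The only mildly delicate point is verifying that the case $g(0)=0$ cannot occur; but this is ruled out directly by the observation above that $c_{\mfp_0,1}\ne 0$ whenever $(\mfp_0,1)$ is a genuine vertex of $\mc{R}\mc{N}_d(\gamma)$. Alternatively, one can check by a leading-term argument that if $g$ vanished to order $m\ge 1$ at $0$, then the leading $\theta^{2m-2}$ term of $(g')^2-gg''$ would be $m\,\tilde{g}(0)^2\ne 0$, contradicting the vanishing of the ODE. Either way, the classification is complete.
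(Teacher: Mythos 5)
Your proof is correct and follows the paper's approach essentially step for step: write $\widetilde{\gamma}=v^{\mfp_0}h(\theta)$, compute $\cine(\widetilde{\gamma})=\mfp_0 v^{2\mfp_0-1}\bigl((h'')^2-h'h'''\bigr)$, and solve the resulting one-variable identity --- the paper by matching Taylor coefficients, you by observing $(g'/g)'=0$ for $g=h'$, which is the same computation packaged a bit more cleanly. One small inaccuracy in your setup: the hypothesis that $\mc{R}\mc{N}_d(\gamma)$ is the single vertex $(\mfp_0,1)$ does \emph{not} force $c_{\mfp,\mfq}=0$ for every $\mfp\neq\mfp_0$ with $\mfq\geq 1$ (Taylor support points with $\mfp>\mfp_0$ are perfectly allowed); the factorization $\widetilde{\gamma}=v^{\mfp_0}h(\theta)$ holds simply because the definition \eqref{231122e1_8} already restricts the sum to $\mfp=\mfp_0$.
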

\begin{proof}[Proof of Lemma \ref{231122lemma2_1}]
Let us write 
\begin{equation}
\widetilde{\gamma}(v, \theta)=
v^a \gamma_2(\theta). 
\end{equation}
By a direct computation, 
\begin{equation}
\begin{split}
\cine(\widetilde{\gamma})= & 
\det
\begin{bmatrix}
v^a \gamma''_2(\theta), & v^a \gamma'''_2(\theta)\\
a v^{a-1} \gamma'_2(\theta), & a v^{a-1} \gamma''_2(\theta)
\end{bmatrix}\\
=& av^{2a-1} 
\begin{bmatrix}
\gamma''_2(\theta), & \gamma'''_2(\theta)\\
 \gamma'_2(\theta), & \gamma''_2(\theta)
\end{bmatrix}
\end{split}
\end{equation}
We therefore must have 
\begin{equation}\label{231123e2_5}
\begin{bmatrix}
\gamma''_2(\theta), & \gamma'''_2(\theta)\\
 \gamma'_2(\theta), & \gamma''_2(\theta)
\end{bmatrix}=0, \forall \theta.
\end{equation}
We write 
\begin{equation}
\gamma_2(\theta)=\sum_{k=1}^{\infty} \frac{c_k}{k!} \theta^k.
\end{equation}
By \eqref{231123e2_5}, we obtain 
\begin{equation}\label{231123e2_7}
\pnorm{
c_2+ \frac{c_3}{1!}\theta+ \frac{c_4}{2!}\theta^2+\dots
}^2= \pnorm{
c_1+ \frac{c_2}{1!}\theta+ \frac{c_3}{2!}\theta^2+\dots
}
\pnorm{
c_3+ \frac{c_4}{1!}\theta+ \frac{c_5}{2!}\theta^2+\dots
}.
\end{equation}
If $c_1=0$, then by checking the coefficients of each $\theta^k$ on both sides of \eqref{231123e2_7}, we obtain that $c_{i}=0$ for every $i\ge 2$.  This violates the assumption that the Newton diagram of $\gamma$ consists of the vertex $(a, 1)$. We therefore assume that $c_1\neq 0$. By multiplying both sides of \eqref{231123e2_7} by a non-zero constant, we can without loss of generality assume that $c_1=1$. By checking the coefficients of each $\theta^k$ on both sides of \eqref{231123e2_7}, we obtain 
\begin{equation}
c_3=c_2^2, \ c_4=c_2^3, \dots
\end{equation}
Therefore, 
\begin{equation}
\gamma_2(\theta)=
\frac{c_2^0}{1!}\theta+ \frac{c_2^1}{2!}\theta^2+ \frac{c_2^2}{3!}\theta^3+\dots
\end{equation}
If $c_2=0$, then we have the form \eqref{231123e2_1}. If $c_2\neq 0$, then we have the form \eqref{231123e2_2}. This finishes the proof of the lemma. 
\end{proof}

\begin{lemma}\label{231123lemma2_4}
Let $\gamma$ be an analytic function satisfying \eqref{231122e1_3kk} and  \eqref{231122e1_4kk}. Assume that the cinematic curvature of $\gamma$ vanishes constantly. Then $\gamma(v, \theta)$  is of one of the following forms: 
\begin{enumerate}
\item[(1)] There exist $\epsilon>0$, analytic functions $h_1, h_0: (-\epsilon, \epsilon)\to \R$ with $h_1(0)=0$ and $h_1\not\equiv 0$ such that 
\begin{equation}\label{231124e2_10uu}
\gamma(v, \theta)=
h_1(v)\theta+ h_0(v);
\end{equation}
\item[(2)] There exist $\epsilon>0$, a small interval $V_{\epsilon}\subset (-\epsilon, \epsilon)$ containing the origin, analytic functions $h_1, h_2: (-\epsilon, \epsilon)\to \R$ with $h_2(0)=h'_2(0)=0, h_2\not\equiv 0$, and an analytic function $w: V_{\epsilon}\to (-\epsilon/2, \epsilon/2)$ with $w(0)=0$ such that 
\begin{equation}\label{231124e2_11uu}
\gamma(v, \theta)=
h_1(v)+ h_2(\theta+w(v)),
\end{equation}
for all $v\in V_{\epsilon}$ and $|\theta|< \epsilon/2$. 
\item[(3)] For every $\epsilon>0$, there exist $V_{\epsilon}\subset (-\epsilon, \epsilon)$, analytic functions $h_1, h_2: (-\epsilon, \epsilon)\to \R$, and an analytic function $w: V_{\epsilon}\to (-\epsilon/2, \epsilon/2)$ satisfying 
\begin{equation}
w'(v)\neq 0, \ \forall v\in V_{\epsilon},
\end{equation}
 such that 
\begin{equation}\label{231124e2_12uu}
\gamma(v, \theta)=
h_1(v)+ h_2(\theta+w(v)),
\end{equation}
for all $v\in V_{\epsilon}$ and $|\theta|< \epsilon/2$. 
\end{enumerate}

\begin{remark}
A key difference between (2) and (3) in Lemma \ref{231123lemma2_4} is that in (3) we do not necessarily know that the interval $V_{\epsilon}$ contains the origin. As a consequence, in \eqref{231124e2_12uu}, we do not have a characterization of $\gamma(v, \theta)$ for $(v, \theta)$ in an open neighborhood of the origin. To make item (3) equally applicable, it is crucial to have the quantifier ``for every $\epsilon>0$" there. 
\end{remark}

\end{lemma}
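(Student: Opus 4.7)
The plan is to analyze the condition $\cine(\gamma) \equiv 0$ via the algebraic identity
\begin{equation*}
\cine(\gamma) = \gamma_{\theta\theta}^{2}\cdot \partial_{\theta}\left(\frac{\gamma_{v\theta}}{\gamma_{\theta\theta}}\right),
\end{equation*}
which holds wherever $\gamma_{\theta\theta}\neq 0$. Thus on any connected open set where $\gamma_{\theta\theta}\ne 0$, the ratio $\gamma_{v\theta}/\gamma_{\theta\theta}$ is independent of $\theta$; writing it as $-w'(v)$ for an analytic $w$ and applying the method of characteristics to the first-order transport equation $(\gamma_\theta)_v + w'(v)(\gamma_\theta)_\theta = 0$ yields $\gamma_\theta(v,\theta) = F(\theta + w(v))$ for some analytic $F$, and integration in $\theta$ produces the desired normal form $\gamma(v,\theta) = h_2(\theta + w(v)) + h_1(v)$ on the component in question.

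First I dispose of the fully degenerate branch. If $\gamma_{\theta\theta}\equiv 0$ near the origin, then $\gamma(v,\theta)=h_1(v)\theta+h_0(v)$; the hypotheses \eqref{231122e1_3kk}--\eqref{231122e1_4kk} force $h_0(0)=h_1(0)=0$ and $h_1\not\equiv 0$ (otherwise every mixed derivative with $\mf{q}\ge 1$ would vanish, contradicting \eqref{231122e1_4kk}), giving form (1). Next, assume $\gamma_{\theta\theta}\not\equiv 0$ and $\gamma_{\theta\theta}(0,0)\ne 0$. Then $\gamma_{\theta\theta}\ne 0$ on a whole neighborhood $U$ of the origin, so the normal form holds on $U$; I absorb $w(0)$ into the argument of $h_2$ to arrange $w(0)=0$ and then $h_2(0)$ into $h_1$ to arrange $h_2(0)=0$, and the hypothesis $\partial_\theta\gamma(0,0)=0$ becomes $h_2'(0)=0$. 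Non-triviality $h_2\not\equiv 0$ is automatic since $h_2''(0)=\gamma_{\theta\theta}(0,0)\ne 0$. This is form (2).

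The remaining subcase is $\gamma_{\theta\theta}(0,0)=0$ together with $\gamma_{\theta\theta}\not\equiv 0$. Here I first observe that if $\gamma_{v\theta}\equiv 0$ as well, then $\gamma$ is separable, $\gamma(v,\theta)=A(v)+B(\theta)$, which after the same normalization fits form (2) with $w\equiv 0$. So I may assume $\gamma_{v\theta}\not\equiv 0$. Fix $\epsilon>0$. Since the zero sets of $\gamma_{\theta\theta}$ and $\gamma_{v\theta}$ are proper real-analytic subvarieties of $\B_\epsilon$, I pick $\theta_0\in(-\epsilon/2,\epsilon/2)$ and $v_*\in(-\epsilon,\epsilon)$ with $\gamma_{\theta\theta}(v_*,\theta_0)\ne 0$ and $\gamma_{v\theta}(v_*,\theta_0)\ne 0$. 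Running the characteristic construction of the first paragraph on a small box around $(v_*,\theta_0)$ produces analytic data $h_1,h_2,w$ with $w'(v_*)=-\gamma_{v\theta}(v_*,\theta_0)/\gamma_{\theta\theta}(v_*,\theta_0)\ne 0$; by analyticity, $w'$ stays non-vanishing on an open interval $V_\epsilon\subset(-\epsilon,\epsilon)$ containing $v_*$. Applying the identity theorem to the analytic function $\gamma(v,\theta)-h_2(\theta+w(v))-h_1(v)$ on the connected strip $V_\epsilon\times(-\epsilon/2,\epsilon/2)$ propagates the identity from the small box to the full strip, yielding form (3).

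I expect the main obstacle to be this last extension step: ensuring that the $w$ obtained from the ODE $w'(v)=-\gamma_{v\theta}(v,\theta_0)/\gamma_{\theta\theta}(v,\theta_0)$ on the fixed section $\theta=\theta_0$ is single-valued and analytic throughout $V_\epsilon$, and that $h_2$, defined by $h_2(x):=\gamma(v_*,x-w(v_*))-h_1(v_*)$, remains analytic on an interval wide enough to contain $\{\theta+w(v):v\in V_\epsilon,|\theta|<\epsilon/2\}$. Both issues are resolved by shrinking $V_\epsilon$ around $v_*$, which is permissible because form (3) only requires the existence of some such $V_\epsilon$ rather than one containing the origin.
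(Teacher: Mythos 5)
Your proposal is correct but takes a genuinely different route from the paper's proof, and the comparison is instructive. You start from the pointwise identity
\begin{equation*}
\cine(\gamma) = \gamma_{\theta\theta}^{2}\,\partial_{\theta}\!\left(\frac{\gamma_{v\theta}}{\gamma_{\theta\theta}}\right),
\end{equation*}
valid wherever $\gamma_{\theta\theta}\neq 0$, split on whether $\gamma_{\theta\theta}(0,0)=0$, and obtain $w$ as an \emph{explicit} integral of the $\theta$-independent ratio; the off-origin case is then finished by analytic continuation from a small box where both $\gamma_{\theta\theta}$ and $\gamma_{v\theta}$ are nonzero. The paper instead splits on the \emph{direction} of the proportionality: its Claim \ref{231123claim2_5} invokes Bocher's theorem on linear dependence of analytic functions (applied to the $\theta$-Wronskian of $\gamma_{\theta\theta}$ and $\gamma_{v\theta}$, which is precisely $\cine(\gamma)$) to produce an analytic $m(v)$ with either $\gamma_{v\theta}=m(v)\gamma_{\theta\theta}$ or $\gamma_{\theta\theta}=m(v)\gamma_{v\theta}$. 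The first branch gives the simple integral $w'=m$ and form (2); the second requires solving the implicit ODE $w'(v)=m(w(v))$ by Cauchy--Kovalevskaya with an off-origin initial condition and then inverting, giving form (3). Your approach trades Bocher and Cauchy--Kovalevskaya for the ratio identity plus analytic continuation, which is more elementary; in exchange you default to form (3) whenever $\gamma_{\theta\theta}(0,0)=0$, even when the paper's finer split (its case \eqref{231123e2_23} with $m$ analytic at the origin) would still produce form (2). Since the lemma only asks for one of the three forms, and the downstream use in Section \ref{240128section2} treats (2) and (3) identically via Theorem \ref{240126theorem2_5}, this loss is harmless. Two small points worth flagging: first, the subcase $\gamma_{v\theta}\equiv 0$ is not merely a degenerate instance of form (2) --- it forces $\gamma=A(v)+B(\theta)$, which is excluded outright by the standing assumption \eqref{231122e1_4kk}, so the subcase is vacuous; second, with the sign convention $\gamma_{v\theta}/\gamma_{\theta\theta}=-w'(v)$ the characteristics give $\gamma_{\theta}=F(\theta-w(v))$ rather than $F(\theta+w(v))$, but this is only a relabeling $w\mapsto -w$ and does not affect the argument.
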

\begin{proof}[Proof of Lemma \ref{231123lemma2_4}]
We discuss three cases separately. The first case is that $\partial_{\theta\theta}\gamma\equiv 0$, the second case is that $\partial_{v\theta}\gamma\equiv 0$, and the third case is that 
\begin{equation}\label{231123e2_19}
\partial_{\theta\theta}\gamma\not\equiv 0, \ \ \partial_{v\theta}\gamma\not\equiv 0.
\end{equation}
Let us start with the first case. In this case, we can solve $\gamma$ directly, and write it as 
\begin{equation}
\gamma(v, \theta)=
h_1(v)\theta+ h_0(v),
\end{equation}
where $h_1(v), h_0(v)$ are analytic functions in $v$ and $h_1\not\equiv 0$. This is of the form \eqref{231124e2_10uu}.\\

In the second case, we have 
\begin{equation}
\gamma(v, \theta)= h_1(\theta)+ h_2(v),
\end{equation}
which violates the assumption  \eqref{231122e1_4kk}. Therefore this case can not happen. \\

Let us consider the last case \eqref{231123e2_19}. 
\begin{claim}\label{231123claim2_5}
Assume \eqref{231123e2_19}. 
There exists an analytic function $m(v)$ such that 
\begin{equation}\label{231123e2_22}
\gamma_{\theta\theta}(v, \theta)-m(v)\gamma_{v\theta}(v, \theta)\equiv 0
\end{equation}
or 
\begin{equation}\label{231123e2_23}
\gamma_{v\theta}(v, \theta)-m(v)\gamma_{\theta\theta}(v, \theta)\equiv 0.
\end{equation}
\end{claim}
\begin{proof}[Proof of Claim \ref{231123claim2_5}]
By the assumption that $\gamma$ is analytic, we know that neither of the sets
\begin{equation}
\{v: \partial_{\theta\theta}\gamma(v, \cdot)\equiv_{\theta}0 \}, \ \ \{v: \partial_{v\theta}\gamma(v, \cdot)\equiv_{\theta}0 \}
\end{equation}
has cluster points. Therefore, if $\epsilon>0$ is chosen to be sufficiently small, then 
\begin{equation}
\begin{split}
& \#\{v\in (-\epsilon, \epsilon): \partial_{\theta\theta}\gamma(v, \cdot)\equiv_{\theta}0 \}\le 1,\\
& \#\{v\in (-\epsilon, \epsilon): \partial_{v\theta}\gamma(v, \cdot)\equiv_{\theta}0 \}\le 1.
\end{split}
\end{equation}
We consider three cases separately: The first case is when $\partial_{\theta\theta}\gamma(0, \cdot)\not\equiv_{\theta}0$; the second case is when $\partial_{v\theta}\gamma(0, \cdot)\not\equiv_{\theta}0$; the last case is when 
\begin{equation}\label{231123e2_26}
\partial_{\theta\theta}\gamma(0, \cdot)\equiv_{\theta}0, \ \ \partial_{v\theta}\gamma(0, \cdot)\equiv_{\theta}0.
\end{equation}
We start with the first case. Fix $v$. The Vandermonde determinant 
\begin{equation}
\det
\begin{bmatrix}
\gamma_{\theta\theta}, & \gamma_{v\theta}\\
\partial_{\theta}\gamma_{\theta\theta}, & \partial_{\theta}\gamma_{v\theta}
\end{bmatrix}\equiv_{\theta} 0.
\end{equation}
By Bocher's result \cite{Boc00} on linear dependence of analytic functions, we are able to find a function $m(v)$ such that 
\begin{equation}\label{231123e2_28}
\gamma_{v\theta}(v, \theta)=m(v)\gamma_{\theta\theta}(v, \theta), \ \ \forall |v|\le \epsilon, |\theta|\le \epsilon. 
\end{equation}
It remains to show that $m(v)$ is analytic. We write 
\begin{equation}
\gamma(v, \theta)=
\sum_{k=0}^{\infty} \frac{g_k(v)}{k!}\theta^k.
\end{equation}
The equation \eqref{231123e2_28} says that 
\begin{equation}\label{231123e2_30}
g'_{k+1}(v)= m(v) g_{k+2}(v), \ \ \forall k\in \N.
\end{equation}
Recall that we are in the case 
\begin{equation}
\partial_{\theta\theta}\gamma(0, \cdot)\not\equiv_{\theta}0.
\end{equation}
This guarantees that there exists $k_0\in \N$ such that $g_{k_0+2}(0)\neq 0$. We apply \eqref{231123e2_28} to $k=k_0$ and obtain 
\begin{equation}
m(v)=\frac{g'_{k_0+1}(v)}{g_{k_0+2}(v)},
\end{equation}
which is an analytic function. This finishes the proof of the first case. \\

The second case where 
\begin{equation}
\partial_{v\theta}\gamma(0, \cdot)\not\equiv_{\theta}0
\end{equation}
can be argued in exactly the same way. \\

We handle the last case \eqref{231123e2_26}. In the case, we know that 
\begin{equation}
\partial_{\theta\theta}\gamma(v, \cdot)\not\equiv 0, 
\end{equation}
whenever $|v|\le \epsilon, v\neq 0$. Therefore, we can apply Bocher's result \cite{Boc00} and obtain that there exists a function $\widetilde{m}$ such that 
\begin{equation}\label{231123e2_35}
\gamma_{v\theta}(v, \theta)=\widetilde{m}(v)\gamma_{\theta\theta}(v, \theta), \ \ \forall |v|\le \epsilon, v\neq 0,  |\theta|\le \epsilon. 
\end{equation}
Instead of \eqref{231123e2_30}, we have 
\begin{equation}\label{231123e2_36}
g'_{k+1}(v)= \widetilde{m}(v) g_{k+2}(v), \ \ \forall k\in \N, \forall v\neq 0.
\end{equation}
In the current case, 
\begin{equation}
g'_{k+1}(0)=g_{k+2}(0)=0, \forall k\in \N.
\end{equation}
However, recall $\gamma$ still satisfies \eqref{231123e2_19}, that is,
\begin{equation}
\partial_{\theta\theta}\gamma\not\equiv 0, \ \ \partial_{v\theta}\gamma\not\equiv 0.
\end{equation}
This guarantees that there exists $k_0\in \N$ such that 
\begin{equation}
g_{k_0+2}(v)\not\equiv 0.
\end{equation}
We apply \eqref{231123e2_36} to $k=k_0$ and see that either $\widetilde{m}$ or $1/\widetilde{m}$ is an analytic  function on $v\in (-\epsilon, \epsilon)$. This finishes the proof of the last case, and the proof of Claim \ref{231123claim2_5}. 
\end{proof}

We continue the discussion on the case \eqref{231123e2_19}. Claim \ref{231123claim2_5} says that there are two cases to consider. We first consider the case \eqref{231123e2_23}. We can find an analytic function $n(v)$ such that 
\begin{equation}\label{231124e2_33}
\gamma_{v}(v, \theta)-m(v) \gamma_{\theta}(v, \theta)=n(v). 
\end{equation}
Let $w: V_{\epsilon}\to \R$ be the analytic function satisfying 
\begin{equation}
w'(v)= m(v), \ \ w(0)=0.
\end{equation}
Here $V_{\epsilon}\subset (-\epsilon, \epsilon)$ is a small interval containing the origin. 
Consider the function 
\begin{equation}
\widetilde{\gamma}(v, \theta):=
\gamma(
v, \theta-w(v)
).
\end{equation}
By the chain rule, 
\begin{equation}
\partial_v \widetilde{\gamma}(v, \theta)= \partial_v \gamma(v, \theta-w(v))- m(v)
\partial_{\theta}
\gamma(v, \theta-w(v))= n(v). 
\end{equation}
Therefore, there exist analytic functions $h_1, h_2$ such that 
\begin{equation}
\gamma(v, \theta-w(v))=
h_1(v)+ h_2(\theta), \ v\in V_{\epsilon}, \theta\in (-\epsilon/2, \epsilon/2),
\end{equation}
and 
\begin{equation}
\gamma(v, \theta)=
h_1(v)+ h_2(\theta+w(v)), \ v\in V_{\epsilon}, \theta\in (-\epsilon/2, \epsilon/2).
\end{equation}
Recall that $\gamma$ satisfies \eqref{231122e1_3kk} and  \eqref{231122e1_4kk}. These imply that $h_2(0)=h'_2(0)=0$ and $h_2\not\equiv 0$, which means $\gamma$ is of the form \eqref{231124e2_11uu}. 
This finishes the discussion of the case \eqref{231123e2_23}.\\

We next consider \eqref{231123e2_22}. We can find $\epsilon>0$ and an analytic function $n(v): (-\epsilon, \epsilon)\to \R$ such that 
\begin{equation}\label{231124e2_39}
\gamma_{\theta}(v, \theta)-m(v)\gamma_{v}(v, \theta)=n(v), 
\end{equation}
for all $|v|< \epsilon, |\theta|< \epsilon$.  We without loss of generality assume that $m(0)=0$, as otherwise \eqref{231124e2_39} can be written in the form \eqref{231124e2_33}. Note that $m\not\equiv 0$, which means we are able to find $v_0\in (-\epsilon/2, \epsilon/2)$ such that 
\begin{equation}
m(v_0)\neq 0.
\end{equation}
Let $w(v): I_{\epsilon}\to (-\epsilon/2, \epsilon/2)$ be the analytic function satisfying 
\begin{equation}\label{231124e2_41}
w'(v)= m(w(v)), \ \ w(0)=v_0.
\end{equation}
Here $I_{\epsilon}\subset (-\epsilon/2, \epsilon/2)$ is a small interval containing the origin. The existence of $I_{\epsilon}$ and the existence and uniqueness of $w(v)$
is  guaranteed by  the Cauchy–Kovalevskaya Theorem. Note that 
\begin{equation}\label{231124e2_42}
w'(0)=m(w(0))=m(v_0)\neq 0,
\end{equation}
which means that $w\not\equiv 0$. \footnote{If we solve \eqref{231124e2_41} with $v_0=0$, then we will obtain $w\equiv 0$, and the function in \eqref{231124e2_43} would not be useful. }
Consider the function 
\begin{equation}\label{231124e2_43}
\widetilde{\gamma}(v, \theta):=\gamma(w(v), \theta-v), \ \ v\in I_{\epsilon}, \theta\in (-\epsilon/2, \epsilon/2). 
\end{equation}
By the chain rule, 
\begin{equation}
\partial_v \widetilde{\gamma}(v, \theta)= m(w(v))\partial_v \gamma(w(v), \theta-v)- \partial_{\theta} \gamma(w(v), \theta-v)=n(w(v)).
\end{equation}
Therefore, there exist analytic functions $h_1, h_2$ such that 
\begin{equation}
\gamma(w(v), \theta-v)=h_1(v)+h_2(\theta), \ v\in I_{\epsilon}, \theta\in (-\epsilon/2, \epsilon/2). 
\end{equation}
and 
\begin{equation}
\gamma(w(v), \theta)=h_1(v)+h_2(\theta+v), \ v\in I_{\epsilon}, \theta\in (-\epsilon/2, \epsilon/2). 
\end{equation}
Recall from \eqref{231124e2_42} that $w'(0)\neq 0$. By the implicit function theorem, we are able to find a small interval $V_{\epsilon}\subset (-\epsilon, \epsilon)$ such that 
\begin{equation}
\gamma(v, \theta)=
\widetilde{h}_1(v)+ h_2(\theta+
\widetilde{w}(v)), \ v\in V_{\epsilon}, \theta\in (-\epsilon/2, \epsilon/2),
\end{equation}
for some new analytic functions $\widetilde{h}_1$ and $\widetilde{w}$. Because of \eqref{231124e2_42}, we see immediately that
\begin{equation}
(\widetilde{w})'(v)\neq 0, \ \forall v\in V_{\epsilon},
\end{equation}
if the length of $V_{\epsilon}$ is chosen to be sufficiently small, 
 and therefore $\gamma$ is of the form \eqref{231124e2_12uu}.  This finishes the discussion on the case \eqref{231123e2_22}, and therefore the proof of Lemma \ref{231123lemma2_4}. \end{proof}

\subsection{Nikodym sets}

Recall the maximal operator 
\begin{align}
    \mathcal{M}_{\gamma}f(x,y)=\sup_{v\in \R}|\int_{\R} f(x-\theta, y-\gamma(v, \theta))a(v, \theta)d\theta| .
\end{align}
For a fixed $\gamma$, in order to show that $\mc{M}_{\gamma}$ is not bounded on $L^p(\R^2)$ for any $p<\infty$, one routine idea is to construct Nikodym sets. For instance, if $\gamma(v, \theta)=v\theta$, then the classical Nikodym set says that 
\begin{equation}
    \sup_{v\in \R}|\int_{\R} f(x-\theta, y-\gamma(v, \theta))a(v, \theta)d\theta|
\end{equation}
is unbounded on $L^p$ for every $p<\infty$. Quite surprisingly, for every single $\gamma$ that is strongly degenerate, the Nikodym sets we need have already been constructed by  Chang and Cs\"ornyei \cite{CC16}. 

\begin{theorem}[Chang and Cs\"ornyei \cite{CC16}]\label{240126theorem2_5}
    Let $E\subset \R^2$ be a strictly convex curve and let $\Gamma\subset \R^2$ be a rectifiable curve. Then there exists $A\subset \R^2$ with Lebesgue measure zero such that for every $\bfx\in \R^2$, we can find $p_{\bfx}\in \R^2$ and 
    \begin{equation}\label{240126e2_53pp}
        E_{\bfx}\subset E, \ \ \mc{H}^1(E\setminus  E_{\bfx})=0,
    \end{equation}
    such that 
    \begin{equation}
        \bfx\in p_{\bfx}+ \Gamma, \ \ p_{\bfx}+E_{\bfx}\subset A. 
    \end{equation}
    Here $\mc{H}^1$ refers to the one-dimensional Hausdorff measure. 
\end{theorem}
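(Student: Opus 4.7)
The plan is to adapt Nikodym's classical measure-zero-set construction, in which lines are replaced by translates of the strictly convex curve $E$, and the centers of those translates are constrained to lie in the one-parameter family $\bfx-\Gamma$ so that the prescribed base point $\bfx$ always lies on the chosen translate. I would produce $A$ as a $G_{\delta}$ intersection $A=\bigcap_n A_n$, where each $A_n$ is an open set of Lebesgue measure at most $2^{-n}$, designed so that every $\bfx$ in a fixed countable dense set $D\subset\R^2$ admits a translation parameter $p_{\bfx,n}\in\bfx-\Gamma$ for which $p_{\bfx,n}+E$ is contained in $A_n$ except on an $\mathcal{H}^1$-null subset of $E$. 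A diagonal/countable-intersection argument then upgrades this to every $\bfx\in\R^2$ by using that moving $p$ slightly along the rectifiable curve $\bfx-\Gamma$ moves $p+E$ Lipschitz-continuously, producing the pair $(p_{\bfx}, E_{\bfx})$ in the statement.

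The construction of each $A_n$ is a Perron-tree/Venetian-blinds rearrangement adapted to $E$. I would discretize the problem: cover a large ball of radius $R$ by a $\delta$-net $\{\bfx_k\}$, and for each $k$ pick a translation parameter $q_k\in\bfx_k-\Gamma$ from a $\delta$-spaced set along $\Gamma$; initially each arc $q_k+E$ is placed inside a fat strip of controlled area. The crucial geometric input is strict convexity of $E$: two distinct translates $p+E$ and $p'+E$ with $|p-p'|$ small meet transversally and in only finitely many points on compact subarcs, because the Gauss map of $E$ is injective. This transversality is exactly what enables a Perron-type rearrangement in which many shifted copies of the same translate of $E$ overlap substantially, so that the union has total area $\ll$ the sum of the individual areas. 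Iterating the rearrangement drives the total area below $2^{-n}$, and this union is declared to be $A_n$.

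Rectifiability of $\Gamma$ enters twofold. First, it lets me parametrize $\Gamma$ piecewise by arc length, so sliding the parameter $p$ along $\bfx-\Gamma$ is a Lipschitz motion with controlled Hausdorff density; this is what upgrades the construction from $D$ to all of $\R^2$, since approximating a general $\bfx$ by nearby $\bfx_k\in D$ perturbs the chosen translate only on an $\mathcal{H}^1$-null portion of $E$. Second, rectifiability ensures that the set of translation parameters $p$ for which $p+E$ intersects small forbidden regions is $\mathcal{H}^1$-null and can be excluded at each stage without losing availability.

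The main obstacle is obtaining $\mathcal{H}^1(E\setminus E_{\bfx})=0$ \emph{uniformly} in $\bfx$, rather than just small. In Nikodym's classical case one can throw away a single base point, but in the curved setting the Perron rearrangement forces the exclusion of all those preimages under the Gauss map of $E$ at which a neighboring translate in the construction creates a conflict. Overcoming this requires making the Perron rearrangement vanishing in the scaling parameter, so that the pointwise density of conflict points tends to zero and the limiting exceptional set on each chosen translate is a countable union of $\mathcal{H}^1$-null sets. This is precisely where strict convexity of $E$ (not just convexity) is used quantitatively, to bound the local density of intersections between nearby translates and thereby make the exceptional set genuinely null in every stage of the iteration.
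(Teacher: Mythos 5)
The paper does not prove this theorem at all: it is quoted from Chang and Cs\"ornyei \cite{CC16}, and the only content the paper supplies is the remark (in the paragraph following the statement) that strict convexity of $E$ automatically ensures the hypothesis of their more general Theorem~6.10, namely that for each direction $\theta$ the Gauss-map fiber $\{\bfx\in E:\theta_{\bfx}=\theta\}$ is $\mathcal{H}^1$-null. So any genuine proof attempt is, by default, a different route from the paper's.

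Taken on its own terms, your sketch has a real gap at the step where you pass from the countable dense set $D$ to all of $\R^2$. You invoke a ``diagonal/countable-intersection argument'' on the grounds that sliding $p$ along $\bfx-\Gamma$ moves $p+E$ Lipschitz-continuously, but Lipschitz proximity of two translated curves does not imply that one is mostly contained in a Lebesgue-null set containing the other; a translate by $\epsilon$ can escape $A_n$ entirely, since $A_n$ has small measure. This upgrade is in fact the central technical difficulty of the Nikodym construction (in \cite{CC16} it is handled through a carefully quantified lemma they call the ``special cut,'' which controls the construction on open neighborhoods of parameters rather than on a fixed countable set), and it cannot be dispatched with the Lipschitz observation. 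Separately, the way you use strict convexity is not how \cite{CC16} use it: their hypothesis is on the Gauss map of $E$ alone (null fibers), a condition on a single translate, not on transversal intersections of pairs of nearby translates, and your heuristic that ``making the Perron rearrangement vanish in the scaling parameter'' forces the stage-by-stage exceptional set on each translate to be $\mathcal{H}^1$-null is too vague to check and, as far as I can tell, conflates two different uses of convexity. The Venetian-blinds intuition points in the right direction, but as written the argument does not close.
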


Chang and Cs\"ornyei \cite{CC16} indeed proved a stronger version of Theorem \ref{240126theorem2_5}, see Theorem 6.10 in their paper. For instance, they worked with $E$ being rectifiable. However, for such a general $E$, they need an extra assumption that for every direction $\theta\in S^1$, the set 
\begin{equation}\label{240126e2_55z}
    \{\bfx\in E: \theta_{\bfx}=\theta\}
\end{equation}
is $\mc{H}^1$-null. Here roughly speaking $\theta_{\bfx}$ can be understood as the tangent direction of $E$ at $\bfx$. By assuming that $E$ is strictly convex, we automatically have \eqref{240126e2_55z}.

Note that in Theorem \ref{240126theorem2_5}, the sets $E$ and $\Gamma$ do not need to be related, and this makes Theorem \ref{240126theorem2_5} an extremely general theorem. It turns out that this level of generality is precisely what we need in our setting. \\

In the rest of this subsection, we will use the classical Nikodym set and the Nikodym sets constructed by Chang and Cs\"ornyei to show that $\mc{M}_{\gamma}$ is unbounded on $L^p(\R^2)$ for every $p<\infty$, whenever $\gamma$ is strongly degenerate.

\begin{lemma}\label{231123lemma2_2}
Assume that 
\begin{equation}
\gamma(v, \theta)= v^a \theta+h_1(v)+ O(|v|^{a+1}|\theta|),
\end{equation}
for some $a\in \N, a\ge 1$. Then for every smooth function $a(v, \theta)$ with $a(0, 0)\neq 0$, the maximal operator $\mc{M}_{\gamma}$ fails to be bounded on $L^p(\R^2)$ for every $p<\infty$. 
\end{lemma}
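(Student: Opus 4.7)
The strategy is to construct a Nikodym-type counterexample adapted to the curve family $\{\theta\mapsto(\theta,\gamma(v,\theta))\}_v$, using Theorem~\ref{240126theorem2_5} of Chang and Cs\"ornyei. Under the hypothesis $\gamma(v,\theta)=v^a\theta+h_1(v)+O(|v|^{a+1}|\theta|)$, each such curve is a smooth perturbation of the straight line of slope $v^a$ passing through $(0,h_1(v))$. As $v$ ranges over a neighborhood of $0$ the slopes $v^a$ cover a positive-measure set in $\R$, so the geometry is that of a one-parameter family of (nearly) straight lines whose slope $v^a$ and $y$-intercept $h_1(v)$ are coupled through~$v$.

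I would begin with two reductions. Restricting the supremum in $\mc{M}_\gamma$ to $v>0$ gives a lower bound on the operator, and on this range the smooth substitution $u=v^a$ is a diffeomorphism onto $(0,\epsilon^a)$, reducing the problem to the case $a=1$. A further rescaling $u\to\lambda u$ together with an adapted scaling of the target $(x,y)$-plane suppresses the perturbation $O(|u|^2|\theta|)$ so that, in the limit $\lambda\to 0$, one is dealing with the purely linear model $\gamma(u,\theta)=u\theta+\widetilde h_1(u)$, where $\widetilde h_1(u)=h_1(u^{1/a})$ is smooth on $(0,\lambda)$.

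With this model in hand, I would apply Theorem~\ref{240126theorem2_5} with $\Gamma=\{(u,\widetilde h_1(u)):u\in(0,\lambda)\}$, rectifiable because $\widetilde h_1$ is smooth, and $E$ a strictly convex arc chosen so that its translates, using the translations supplied by the theorem, correspond to segments of the $\gamma$-curves of the model. Theorem~\ref{240126theorem2_5} then furnishes a null set $A\subset\R^2$ together with, for each $\bfx$, a parameter $u_{\bfx}$ and a translation $p_{\bfx}$ satisfying $\bfx=p_{\bfx}+(u_{\bfx},\widetilde h_1(u_{\bfx}))$ and $p_{\bfx}+E_{\bfx}\subset A$ for some $E_{\bfx}\subset E$ of full $\mc{H}^1$ measure. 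Geometrically, the $\gamma$-curve of parameter $u_{\bfx}$ translated so as to pass through $\bfx$ lies essentially in $A$. Taking $f=\chi_{A_\delta}$ for a $\delta$-neighborhood of $A$, one has $\|f\|_p\to 0$ as $\delta\to 0$ while $\mc{M}_\gamma f(\bfx)\ge c>0$ on a positive-measure set of $\bfx$, forcing unboundedness of $\mc{M}_\gamma$ on every $L^p$ with $p<\infty$.

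The main obstacle is the correct identification of $E$: our $\gamma$-curves are essentially straight lines, whereas Theorem~\ref{240126theorem2_5} requires $E$ to be strictly convex. The resolution comes from working in a dual picture relating lines to the envelope of a strictly convex curve and exploiting the flexibility of Chang and Cs\"ornyei's construction, which permits $E$ and $\Gamma$ to be chosen as unrelated curves: $\Gamma$ encodes how the line's base point varies with the parameter, while $E$ encodes the averaging direction along the line. A secondary technical point is to absorb the perturbation $O(|v|^{a+1}|\theta|)$ via the rescaling, which is achieved by taking $\lambda$ sufficiently small.
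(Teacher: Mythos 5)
Your scaling reduction to a linear model and the substitution $u=v^a$ are sound, and they match the first half of the paper's argument. The gap is in the second half: Theorem~\ref{240126theorem2_5} simply cannot be applied to this lemma. The averaging curve $\theta\mapsto(\theta,u\theta)$ is a \emph{line segment}, and a line segment is not strictly convex; moreover, a line segment fails even the relaxed condition of Chang and Cs\"ornyei's general version (\eqref{240126e2_55z}), since every point of a segment has the same tangent direction, so $\{\bfx\in E:\theta_{\bfx}=\theta\}$ is all of $E$ for one value of $\theta$. Crucially, the slope of the curve varies with $u$, so the family cannot be realized as translates $p_{\bfx}+E$ of a single fixed curve at all; there is no analogue here of the identity $v e^{\theta}=e^{\theta+\ln v}$ that makes Lemma~\ref{231123lemma2_3} work, and indeed Lemma~\ref{231122lemma2_1} singles out $v^a\theta$ and $v^a(e^{c\theta}-1)$ precisely because the exponential admits such a rewriting while the monomial does not. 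Your "dual picture'' remark gestures at a fix but does not supply one: converting lines to the envelope of a convex curve changes the object being translated and does not reduce the maximal operator to one to which Theorem~\ref{240126theorem2_5} applies.

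The paper avoids the issue entirely. After splitting into $h_1(v)=O(v^a)$ (the nontrivial case) and $h_1(v)\ne O(v^a)$ (killed by scaling in $v$), it performs essentially your rescaling and observes that the perturbation $O(|v|^{a+1}|\theta|)$ disappears in the $j_1\to\infty$ limit, so uniform $L^p$-boundedness of $\mc{M}_\gamma$ would force $L^p$-boundedness of the limiting operator $\sup_{0\le v\le1}|\int f(x-\theta,\,y-v^a\theta-\cdot)\,d\theta|$. This is a Nikodym maximal function for \emph{lines}, and its unboundedness for every $p<\infty$ comes from the classical Nikodym set construction, which is a separate, older result not obtained from Theorem~\ref{240126theorem2_5}. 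If you want to repair your argument, replace the appeal to Theorem~\ref{240126theorem2_5} by an appeal to the classical Nikodym set for straight lines, and keep your scaling limit.
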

\begin{proof}[Proof of Lemma \ref{231123lemma2_2}]
We consider two cases 
\begin{equation}
    h_1(v)=O(v^a), \text{ or } h_1(v)\neq O(v^a).
\end{equation}
The latter case is trivial, as can be seen via a direct scaling argument in the $v$ variable. We focus on the former case. 
We argue by contradiction. Let us assume that there exists $p<\infty$, such that 
\begin{equation}\label{231123e2_12}
\Norm{
\mc{M}_{\gamma} f
}_{L^p(\R^2)}\lesim_{\gamma, a} \norm{f}_{L^p(\R^2)}. 
\end{equation}
Let $j_1\in \N$ be a large integer. \eqref{231123e2_12} trivially implies that 
\begin{equation}
\Norm{
\sup_{0\le v\le 2^{-j_1}}\anorm{\int_{\R} f(x-\theta, y-\gamma(v, \theta))a(v, \theta)d\theta}
}_{L^p(\R^2)}\lesim_{\gamma, a} \norm{f}_{L^p(\R^2)}. 
\end{equation}
We rename the parameter $v\mapsto 2^{-j_1}v$,  scale the $y$ variable, and obtain 
\begin{multline}
\Norm{
\sup_{0\le v\le 1}\anorm{\int_{\R} f(x-\theta, y-
v^a\theta+
2^{aj_1}h_1(2^{-j_1}v)
+ 2^{-j_1} O(|v|^{a+1}|\theta|)
)a(v, \theta)d\theta}
}_{L^p(\R^2)}\\
\lesim_{\gamma, a} \norm{f}_{L^p(\R^2)},
\end{multline}
uniformly in $j_1\in \N$. 
However, by taking a limit in $j_1$, this is a contradiction to the existence of the (classical) Nikodym sets.
 The proof of Lemma \ref{231123lemma2_2} is finished. \end{proof}

\begin{lemma}\label{231123lemma2_3}
Assume that 
\begin{equation}
\gamma(v, \theta)= v^a(e^{\theta}-1)+ h_1(v)+O(|v|^{a+1}|\theta|),
\end{equation}
for some $a\in \N, a\ge 1$. Then for every smooth function $a(v, \theta)$ with $a(0, 0)\neq 0$, the maximal operator $\mc{M}_{\gamma}$ fails to be bounded on $L^p(\R^2)$ for every $p<\infty$. 
\end{lemma}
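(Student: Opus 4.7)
The plan is to adapt the proof of Lemma \ref{231123lemma2_2}, with the classical Nikodym set replaced by the Chang--Cs\"ornyei Nikodym set from Theorem \ref{240126theorem2_5} applied to a strictly convex exponential arc. As in Lemma \ref{231123lemma2_2}, the case $h_1(v)\neq O(v^a)$ is handled by a direct scaling argument in $v$, so from here on I focus on the case $h_1(v)=O(v^a)$.

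Suppose toward contradiction that $\|\mc{M}_\gamma f\|_{L^p}\lesim_{\gamma,a}\|f\|_{L^p}$ for some $p<\infty$. Restricting the supremum to $v\in[0,2^{-j_1}]$, substituting $v\mapsto 2^{-j_1}v$, and rescaling the $y$-variable by $2^{aj_1}$ produces a maximal operator with phase
\[
\gamma_{j_1}(v,\theta)=v^a(e^\theta-1)+2^{aj_1}h_1(2^{-j_1}v)+2^{-j_1}O(|v|^{a+1}|\theta|),\quad v\in[0,1],
\]
bounded on $L^p$ uniformly in $j_1$. The middle term depends only on $v$ and is bounded (since $h_1(v)=O(v^a)$), so it is absorbed into a $v$-dependent vertical translation of $f$ and plays no role. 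The last term is $O(2^{-j_1})$ and vanishes in the limit, so a Fatou-type argument shows that the maximal operator with limiting phase $\gamma_\infty(v,\theta)=v^a(e^\theta-1)$, $v\in[0,1]$, is also bounded on $L^p$. Via the monotone change of variable $u=v^a$ followed by a linear rescaling of the $y$-coordinate, this reduces the matter to contradicting boundedness of the maximal operator with phase $u(e^s-1)$ for $u$ ranging over any convenient closed subinterval of $(0,1]$.

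The contradiction is produced by Theorem \ref{240126theorem2_5}, applied with $E=\{(\theta,1-e^{-\theta}):|\theta|\le\delta\}$ and $\Gamma=\{(\theta,1-e^{-\theta}):|\theta|\le\delta/2\}$. Both are rectifiable, and $E$ is strictly convex because its second derivative $-e^{-\theta}$ has constant sign. The theorem produces a null set $A\subset\R^2$ such that for every $\bfx$ there is $\theta_0\in[-\delta/2,\delta/2]$ with $\bfx-(\theta_0,1-e^{-\theta_0})+E_\bfx\subset A$, where $E_\bfx\subset E$ has full $\mc{H}^1$-measure. A direct computation with $s=\theta_0-\theta'$ and $u_\bfx:=e^{-\theta_0}$ yields
\[
\bfx-(\theta_0,1-e^{-\theta_0})+E=\bfx-\{(s,u_\bfx(e^s-1)):s\in[\theta_0-\delta,\theta_0+\delta]\},
\]
so translates of the single arc $E$ sweep out exactly the one-parameter family of scaled exponential arcs $\{(s,u(e^s-1))\}$ via the reparametrization $u=e^{-\theta_0}\in[e^{-\delta/2},e^{\delta/2}]$. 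Since $|\theta_0|\le\delta/2$, the $s$-range always contains $[-\delta/2,\delta/2]$; after rescaling the $y$-coordinate so that the $u$-interval lies inside the chosen subinterval of $(0,1]$, we conclude that for every $\bfx$ in a bounded positive-measure region there is $u_\bfx$ in the relevant range such that $\bfx-\{(s,u_\bfx(e^s-1)):|s|\le\delta/2\}\subset A$ on a full-measure set of $s$. Since $a(0,0)\neq 0$ ensures the rescaled amplitude is bounded below on the arc, the test function $f_\epsilon=\mathbf{1}_{A_\epsilon}$ (indicator of a small thickening of a bounded piece of $A$) satisfies $\mc{M}_\gamma f_\epsilon\ge c>0$ on this region while $\|f_\epsilon\|_{L^p}\to 0$ as $\epsilon\to 0$, contradicting the uniform $L^p$ bound. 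The main obstacle is the bookkeeping in the display above; the exponential identity $(1-e^{-\theta'})-(1-e^{-\theta_0})=e^{-\theta_0}(1-e^{-(\theta'-\theta_0)})$ is precisely what lets translates of a fixed arc parametrize the scaled family and makes Chang--Cs\"ornyei's theorem applicable here.
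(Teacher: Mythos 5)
Your computations with Theorem \ref{240126theorem2_5} are correct, and the key exponential identity $e^{-\theta_0}-e^{-\theta'}=e^{-\theta_0}(1-e^{-(\theta'-\theta_0)})$ is exactly the multiplicative fact that makes translates of one exponential arc sweep out the scaled family; this is the same mechanism used in the paper's proof. However, there is a genuine gap in your handling of the drift term. After scaling, the phase is $v^a(e^\theta-1)+m_{j_1}(v)+2^{-j_1}O(|v|^{a+1}|\theta|)$ with $m_{j_1}(v)=2^{aj_1}h_1(2^{-j_1}v)$, and in the limit you must contradict boundedness of the maximal operator with phase $u(e^s-1)+\mu(u)$ for a genuine $C^1$ function $\mu$ (when $h_1(v)=cv^a+O(v^{a+1})$ with $c\neq 0$, the limit drift is $\mu(u)=cu$). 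The claim that this $v$-dependent translation ``is absorbed into a $v$-dependent vertical translation of $f$ and plays no role'' is not a valid step: $f$ is a fixed function, so a translation that varies with the supremum parameter $v$ cannot be moved onto $f$, and the drifted curve family is genuinely different. Concretely, your Chang--Cs\"ornyei setup with $\Gamma=\{(\theta_0,1-e^{-\theta_0}):|\theta_0|\le\delta/2\}$ produces, for each $\bfx$, a $u_\bfx$ with $\bfx-(s,u_\bfx(e^s-1))\in A$ for a.e.\ $s$; but the maximal operator you need to defeat evaluates $f$ at $\bfx-(s,u_\bfx(e^s-1)+\mu(u_\bfx))$, a point shifted by $(0,-\mu(u_\bfx))$, which you cannot place in $A$ unless $\mu(u_\bfx)=0$.

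The fix is to build the drift into $\Gamma$, which is precisely what the paper does and is precisely the freedom Theorem \ref{240126theorem2_5} provides: $E$ and $\Gamma$ need not be related. The paper takes $E=\{(\theta,e^\theta-1):|\theta|\le 1\}$ and $\Gamma=\{(\ln v,\,-w_1(v)-1):1/e\le v\le 1\}$, so that $p_\bfx+(\theta',e^{\theta'}-1)$ is exactly $(x,y)+(\theta,v_\bfx e^\theta+w_1(v_\bfx))$ after reparametrization. In your coordinates, writing $u=e^{-\theta_0}$, you should replace your $\Gamma$ by $\{(-\ln u,\,1-u-\mu(u)):u\in[e^{-\delta/2},e^{\delta/2}]\}$ (still rectifiable since $\mu$ is $C^1$); your $\Gamma$ corresponds to the special case $\mu\equiv 0$. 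With that one modification the argument closes, and the rest of your proposal mirrors the paper's proof.
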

\begin{proof}[Proof of Lemma \ref{231123lemma2_3}] Without loss of generality, let us assume that $a$ is non-negative.  We argue by contradiction, and assume that there exists $p<\infty$, such that 
\begin{equation}\label{231123e2_12zz}
\Norm{
\mc{M}_{\gamma} f
}_{L^p(\R^2)}\lesim_{\gamma, a} \norm{f}_{L^p(\R^2)}. 
\end{equation}
By the same scaling argument as in the proof of Lemma \ref{231123lemma2_2}, we obtain that 
\begin{multline}
\Norm{
\sup_{0\le v\le 1}\anorm{\int_{\R} f(x+\theta, y+
v(e^{\theta}-1))a(\theta)d\theta}
}_{L^p(\R^2)}
\lesim_{\gamma, a} \norm{f}_{L^p(\R^2)},
\end{multline}
where $a: \R\to \R$ is a non-negative smooth function supported in a small neighborhood of the origin, with $a(0)\neq 0$. Let $w_1(v)$ be a $C^1$ function. We show that the maximal operator 
\begin{equation}\label{240126e2_64pp}
    \sup_{0\le v\le 1}
    \anorm{
    \int_{-1}^1
    f(x+\theta, 
    y+ ve^{\theta}+w_1(v)
    ) d\theta
    }
\end{equation}
is not bounded on $L^p(\R^2)$ for every $p<\infty$. To apply Theorem \ref{240126theorem2_5}, we take 
\begin{equation}
    E:=\{
    (\theta, e^{\theta}-1):
    |\theta|\le 1
    \},
\end{equation}
and 
\begin{equation}
    \Gamma:=
    \{
    (\ln v, -w_1(v)-1): 1/e\le v\le 1
    \},
\end{equation}
where $e$ is the base of the natural log.  For every $\bfx=(x, y)\in \R^2$, we are able to find $p_{\bfx}$ such that 
\begin{equation}
    \bfx\in p_{\bfx}+\Gamma.
\end{equation}
In other words, there exists $v_{\bfx}\in [1/e, 1]$ such that 
\begin{equation}
    p_{\bfx}= 
    (
    x-\ln v_{\bfx}, 
    y+ w_1(v_{\bfx})+1
    ).
\end{equation}
Moreover, 
\begin{equation}
\begin{split}
        & p_{\bfx}+ (\theta, e^{\theta}-1)\\
        & =
        (
    x-\ln v_{\bfx}+\theta, 
    y+e^{\theta}+w_1(v_{\bfx})
    )
        \in A,
\end{split}
\end{equation}
for almost every $\theta\in [-1, 1]$. Note that $v_{\bfx}\in [1/e, 1]$, and therefore 
\begin{equation}
    (x+\theta', 
    y+ 
    e^{\theta'+\ln v_{\bfx}}+w_1(v_{\bfx})
    )\in A,
\end{equation}
for almost every $\theta'\in [0, 1]$. By taking $f$ to be the indicator function of $A$, we see that  the maximal operator \eqref{240126e2_64pp} fails to be bounded on $L^p(\R^2)$ for every $p<\infty$. 
\end{proof}

So far we have finished the two types of functions given by Lemma \ref{231122lemma2_1}. Next, we will handle the three types of functions given by Lemma \ref{231123lemma2_4}. 

The first type, given by item (1) in Lemma \ref{231123lemma2_4} can be handled in exactly the same way as Lemma \ref{231123lemma2_2}. 

The second and third types, given by item (2) and item (3) in Lemma \ref{231123lemma2_4}, will require Theorem \ref{240126theorem2_5}. Let us take the second type as an example; the other one is exactly the same. Recall that there exists $\epsilon>0$ such that 
\begin{equation}
    \gamma(v, \theta)=
    h_2(\theta+w(v))+h_1(v),
\end{equation}
where $w: V_{\epsilon}\to (-\epsilon/2, \epsilon/2)$ is an analytic function with $w(0)=0$, $V_{\epsilon}\subset (-\epsilon, \epsilon)$ is an interval containing the origin, $h_1, h_2$ are analytic functions on $(-\epsilon, \epsilon)$, and
\begin{equation}
    h_2(0)=h'_2(0)=0, \ h_2\not\equiv 0.
\end{equation}
By applying Theorem \ref{240126theorem2_5} to 
\begin{equation}
    \Gamma:=  
    \{
    (w(v), -h_1(v)): v\in V_{\epsilon}
    \},
\end{equation}
and 
\begin{equation}
    E:=
    \{
    (\theta, h_2(\theta)): |\theta|< \epsilon/2
    \},
\end{equation}
we finish the discussion for the second type of functions.

\section{Examples}\label{240222section3}

In this section, we will first give examples for the algorithm of determining the critical exponent $p_{\gamma}$, and then give examples for the sharpness of $p_{\gamma}$.

\subsection{Examples for the algorithm of determining the critical exponent \texorpdfstring{$p_{\gamma}$}{}
}

The first example is 
\begin{equation}
   \gamma(v, \theta)=
   (\theta-v)^3 v+v^3.
\end{equation}
The reduced Newton diagram $\large \mathcal{R}\mathcal{N}_d(\gamma)$ consists of two points $(1, 3), (3, 1)$ and one edge connecting the points $(1, 3), (3, 1)$, called $\mf{E}_1$. The vertical Newton distance in this case is 
\begin{equation}
   \mf{d}(\gamma)=1.
\end{equation}
Next, we work on the edge $\mf{E}_1$. Denote 
\begin{equation}
   \gamma_{
   \mf{E}_1
   }(v, \theta):= 
   (\theta-v)^3 v+v^4.
\end{equation}
We take the second order derivative in $\theta$, and obtain 
\begin{equation}
    \kappa_{
    \mf{E}_1
    }(v, \theta)=
    6v(\theta-v).
\end{equation}
Write 
\begin{equation}
   \theta=r v,
\end{equation}
and 
\begin{equation}
   \kappa_{\mf{E}_1}(v, \theta)=
   \kappa_{\mf{E}_1}(v, r v)=
   6v^2(r-1)=: v^2 \kappa_{\mf{E}_1}(r). 
\end{equation}
Note that 
\begin{equation}
   \kappa_{\mf{E}_1}(r)=
   6(r-1),
\end{equation}
and it has only one non-zero real root. Denote $\mf{r}_{1, 1}:= 1$. We do the change of variables 
\begin{equation}
   v\to v, \ \ \theta\to \theta+ \mf{r}_{1, 1}v, 
\end{equation}
and obtain 
\begin{equation}
   \gamma_{\star}(v, \theta):=\gamma(v, \theta+v)=
   \theta^3 v+ v^4.
\end{equation}
The reduced Newton diagram $\mc{R}\mc{N}_d(\gamma_{\star})$ consists of only one vertex $(1, 3)$. Note that 
\begin{equation}
    \mf{d}_{
    >(1, 3)
    }(\gamma_{\star})=3,
\end{equation}
where we applied the convention made in \eqref{240125e1_19}. Moreover, we run out of edges to consider, and therefore the algorithm terminates. We define 
\begin{equation}
   \mf{D}_{\gamma}:=\max\{
   \mf{d}(\gamma), \mf{d}_{>(1, 3)}(\gamma_{\star})
   \}=3. 
\end{equation}
Moreover, define the critical exponent
\begin{equation}
   p_{\gamma}:=\max\{2, 
   \mf{D}_{\gamma}
   \}=3.
\end{equation}
This finishes the discussion of the first example. \\

The second example is 
\begin{equation}
    \gamma(v, \theta)=
    (
    \theta+\frac{v}{1!}+\frac{v^2}{2!}+\dots
    )^3 v+ v^2.
\end{equation}
The reduced Newton diagram consists of two vertices $(1, 3), (3, 1)$ and the edge $\mf{E}_1$ connecting them. The vertical Newton distance 
\begin{equation}
    \mf{d}(\gamma)=2. 
\end{equation}
Let us work on $\mf{E}_1$. Denote 
\begin{equation}
    \gamma_{\mf{E}_1}(v, \theta)=
    (\theta+v)^3v-v^4.
\end{equation}
We take the second derivative in $\theta$ and obtain 
\begin{equation}
    \kappa_{\mf{E}_1}(v, \theta)=
    6(\theta+v)v.
\end{equation}
Note that 
\begin{equation}
    \kappa_{\mf{E}_1}(r)=
    6(r+1),
\end{equation}
which admits only one non-zero root $\mf{r}_{1, 1}=-1$. We apply the change of variable 
\begin{equation}
    \theta\to \theta+\mf{r}_{1, 1}v,
\end{equation}
and obtain 
\begin{equation}
    \gamma_{\star}(v, \theta)=
    \pnorm{
    \theta+\frac{v^2}{2!}+\frac{v^3}{3!}+\dots
    }^3 v+ v^2.
\end{equation}
Note that 
\begin{equation}
    \mf{d}_{
    >(1, 3)
    }(\gamma_{\star})=5/2.
\end{equation}
In the reduced Newton diagram $\mc{R}\mc{N}_d(\gamma_{\star})$, we consider all the vertices $(\mf{p}, \mf{q})$ with $\mf{p}>1$. There is only one such vertex $(5, 1)$, which we call $(\mf{p}_{\star+1}, \mf{q}_{\star+1})$. The vertex $(\mf{p}_{\star+1}, \mf{q}_{\star+1})$ does not lie on the line containing $\mf{E}_1$, and therefore we let $\mf{E}_{\star+0}$ denote the edge in the $\mc{R}\mc{N}_d(\gamma_{\star})$ connecting the vertices 
\begin{equation}
    (1, 3),  \ (5, 1).
\end{equation}
Note that the slope of $\mf{E}_{\star+0}$ is strictly smaller than that of $\mf{E}_1$. Denote 
\begin{equation}\label{240222e3_22}
    \gamma_{\star+0}(v, \theta)=
    \pnorm{
    \theta+\frac{v^2}{2!}
    }^3 v- 
    \pnorm{
    \frac{v^2}{2!}
    }^3 v.
\end{equation}
The reduced Newton diagram $\mc{R}\mc{N}_d(\gamma_{\star})$ consists of a new edge $\mf{E}_{\star+0}$ we need to consider next. We repeat the above process. The changes of variables we will do in the rest of the algorithm will be 
\begin{equation}\label{240222e3_23}
    \theta\to \theta-\frac{v^n}{n!}, \  \ n=2, 3, \dots
\end{equation}
and the counterparts of \eqref{240222e3_22} are 
\begin{equation}
    \pnorm{
    \theta+\frac{v^n}{n!}
    }^3 v- 
    \pnorm{
    \frac{v^n}{n!}
    }^3 v, \ n=3, 4, \dots
\end{equation}
Note that they are all of the form \eqref{240129e1_42kkk}. Instead of doing the changes of variables \eqref{240222e3_23} separately, we apply 
\begin{equation}
    \theta\to \theta-\frac{v}{1!}-\frac{v^2}{2!}-\dots,
\end{equation}
and obtain 
\begin{equation}
    \gamma_{\star}(v, \theta)=\theta^3 v+v^2,
\end{equation}
and the algorithm terminates immediately. Note that 
\begin{equation}
    \mf{d}_{
    >(1, 3)
    }(\gamma_{\star})=3,
\end{equation}
where we applied the convention made in \eqref{240125e1_19}. In the end, put 
\begin{equation}
   \mf{D}_{\gamma}:=\max\{
   \mf{d}(\gamma), \mf{d}_{>(1, 3)}(\gamma_{\star})
   \}=3. 
\end{equation}
Moreover, define the critical exponent
\begin{equation}
   p_{\gamma}:=\max\{2, 
   \mf{D}_{\gamma}
   \}=3.
\end{equation}
This finishes the discussion of the second example.

\subsection{Examples for the sharpness of \texorpdfstring{$p_{\gamma}$}{}}

Our goal in this subsection is to show that the exponent $p_{\gamma}$ in Theorem \ref{240127theorem1_6} is sharp, in the sense that $\mc{M}_{\gamma}$ is unbounded on $L^p(\R^2)$ for every $p< p_{\gamma}$. \\

Let us start by using the standard Knapp example to show that the constraint $p\ge 2$ is always necessary. Assume that 
\begin{equation}\label{240129e3_1}
    \sup_{
    |v|\le \epsilon
    }
    \anorm{
    \int_{|\theta|\le \epsilon}
    f(x-\theta, 
    y-
    \gamma(v, \theta)
    )d\theta
    }
\end{equation}
is bounded on $L^p(\R^2)$. The only ``enemy" here is the case where $\gamma(v, \theta)$ depends purely on $\theta$, as in this case \eqref{240129e3_1} is bounded on $L^p(\R^2)$ for every $p>1$. 

Let $\delta>0$.  Take $f$ to be the indicator function of the ball $\B_{\delta}$. Our goal is to show that 
\begin{equation}\label{240129e3_2}
    \eqref{240129e3_1}\gtrsim \delta,
\end{equation}
for $(x, y)$ from a set of Lebesgue measure $\simeq_{\gamma} 1$, and the constrain $p\ge 2$ follows immediately. To show \eqref{240129e3_2}, it suffices to show that the Jacobian of the map 
\begin{equation}\label{240129e3_2zz}
    (v, \theta)\mapsto (\theta, \gamma(v, \theta))
\end{equation}
does not vanish constantly. Simple calculation shows that the Jacobian is equal to $\partial_v \gamma$, which  vanishes constantly if and only if $\gamma$ depends purely on $\theta$.  This finishes showing that the constraint $p\ge 2$ is necessary. \\

We next show that the constraint 
\begin{equation}\label{240129e3_4}
    p\ge \mf{d}(\gamma)
\end{equation}
is necessary in order for $\mc{M}_{\gamma}$ to be bounded on $L^p$. Without loss of generality, we assume that $\mf{d}(\gamma)>2$.  Denote 
\begin{equation}\label{240128e3_2pp}
    h_1(v):=\sum_{\mf{p}} c_{\mf{p}, 0} v^{\mf{p}}.
\end{equation}
Denote 
\begin{equation}
    \mf{p}_0:=\inf \{\mf{p}: c_{\mf{p}, 0}\neq 0\}.
\end{equation}
If the inf is taken over an empty set, then we write $\mf{p}_0=\infty$. By the assumption that $\mf{d}(\gamma)>2$, we know that $\mf{p}_0< \infty$. We draw the vertical line passing through the point $(\mf{p}_0, 0)$, and by the definition of $\mf{d}(\gamma)$, this line enters the  reduced Newton polyhedron $\mc{R}\mc{N}(\gamma)$ at $(\mf{p}_0, \mf{d}(\gamma))$. (This line may not intersect the reduced Newton diagram $\mc{R}\mc{N}_d(\gamma)$.) Let $\mf{E}_i$ be an (possibly non-compact) edge in $\mc{R}\mc{N}(\gamma)$ that contains the point  $(\mf{p}_0, \mf{d}(\gamma))$. \footnote{There may be two such edges, we just take an arbitrary one. } Write the two endpoints of the edge $\mf{E}_i$ as 
\begin{equation}
    (\mf{p}_i, \mf{q}_i),  \ \ (\mf{p}_{i+1}, \mf{q}_{i+1}),
\end{equation}
with $\mf{p}_i< \mf{p}_{i+1}$. If $\mf{E}_i$ is non-compact, then $\mf{p}_{i+1}=\infty$. For the sake of simplicity, we only consider the case 
\begin{equation}
    \mf{p}_i < \mf{p}_0< \mf{p}_{i+1};
\end{equation}
the other cases can be handled similarly. Denote 
\begin{equation}
    \mf{m}_i:=
    \frac{
    \mf{p}_i-\mf{p}_{i+1}
    }{
    \mf{q}_{i+1}-\mf{q}_{i}
    }.
\end{equation}
Let $\zeta>0$ be a small positive real number; $\zeta$ will be sent to zero eventually. Take two large positive integers $j_1, j_2$ satisfying 
\begin{equation}
    j_1= (\mf{m}_i+\zeta) j_2.
\end{equation}
We will send $j_1, j_2$ to $\infty$, for fixed $\zeta$. Assume that 
\begin{equation}
    \sup_{
    |v|\le \epsilon
    }
    \anorm{
    \int_{|\theta|\le \epsilon}
    f(x-\theta, 
    y-
    \gamma(v, \theta)
    )d\theta
    }
\end{equation}
is bounded on $L^p(\R^2)$. We restrict the values of $v$ and $\theta$, and trivially obtain that 
\begin{equation}
    \Norm{
    \sup_{
    v\in [2^{-j_1}, 2^{-j_1+1}]
    }
    \anorm{
    \int_{2^{-j_2}}^{
    2^{-j_2+1}
    }
    f(x-\theta, 
    y-
    \gamma(v, \theta)
    )d\theta
    }
    }_{
    L^p(\R^2)
    }\lesim \norm{f}_{L^p(\R^2)},
\end{equation}
uniformly in $j_1, j_2$. By scaling, this implies 
\begin{equation}
    \Norm{
    \sup_{
    v\in [1, 2]
    }
    \anorm{
    \int_{1}^{
    2
    }
    f(x-\theta, 
    y-
    \gamma_{\bfj}(v, \theta)
    )d\theta
    }
    }_{
    L^p(\R^2)
    }\lesim 
    2^{j_2} \norm{f}_{L^p(\R^2)},
\end{equation}
where 
\begin{equation}
    \gamma_{\bfj}(v, \theta):=
    2^{
    \mf{p}_i j_1+\mf{q}_i j_2
    }
    \gamma(2^{-j_1}v, 2^{-j_2}\theta), \ \ \bfj:=(j_1, j_2). 
\end{equation}
By taking $f$ to be the indicator function of the unit ball, we obtain
\begin{equation}
    2^{
    \frac{
    \mf{p}_i j_1+\mf{q}_i j_2-\mf{p}_0 j_1
    }{
    p
    }
    }\lesim 
    2^{
    j_2
    },
\end{equation}
and therefore 
\begin{equation}
    p\ge 
    \mf{q}_i+
    (\mf{p}_i-\mf{p}_0) \frac{j_1}{j_2}\ge \mf{q}_i+
    (\mf{p}_i-\mf{p}_0) (\mf{m}_i+\delta),
\end{equation}
for every $\delta>0$. By sending $\delta$ to zero, we obtain 
\begin{equation}\label{240129e3_17}
    p\ge \mf{q}_i+
    (\mf{p}_i-\mf{p}_0) \mf{m}_i=\mf{d}(\gamma). 
\end{equation}
This finishes showing that the constraint $p\ge \mf{d}(\gamma)$ is necessary. \\

At this point, it should be clear that by following the algorithm in Subsection \ref{240129subsection1_2}, and repeating the argument in \eqref{240129e3_4}-\eqref{240129e3_17}, we will see that the constraint 
\begin{equation}
    p\ge \max\{2, \mf{D}_{\gamma}\}=p_{\gamma}
\end{equation}
is necessary.

\section{Resolution of singularities}\label{240129section4kk}

The material in this section is a preparation for the proof of the ``only if" part of Theorem \ref{240127theorem1_2} and the proof of Theorem \ref{240127theorem1_6}, and can be  stated independently from the rest of the (already long) proof.  \\

 For $\epsilon>0$, let $\B_{\epsilon}\subset \R^2$ be the ball of radius $\epsilon$ centered at the origin. Let $\B_{\epsilon}^+$ be the intersection of $\B_{\epsilon}$ with the first quadrant. Let $h(v): (-\epsilon, \epsilon)\to \R$ be an analytic function that is not constantly zero. Define 
 \begin{equation}
 \order(h):=\min \{
 \alpha\in \N: h^{(\alpha)}(0)\neq 0
 \}.
 \end{equation}
 Let $M\in \N$ be a positive integer. Define 
 \begin{equation}
 \order(h(v^{\frac{1}{M}})):= 
 \frac{
 \order(h)
 }{
 M}.
 \end{equation}

\begin{theorem}[Greenblatt, \cite{Gre04}]\label{231120theorem3_3}
    Let $P(v, \theta): \R^2\to \R$ be a real analytic function that is not constantly zero. There exist a small constant $\epsilon>0$, large positive integers $L, N\in \N$ and $K_1, \dots, K_L\in \N$, a positive integer $M\in \N$, a collection of functions 
    \begin{equation}\label{231122e3_2zz}
    \begin{split}
    & r_{k_1}(v),r_{k_1,k_2}(v),\cdots,r_{k_1,k_2,\cdots,k_L}(v), s_{k_1}(v),s_{k_1,k_2}(v),\cdots,s_{k_1,k_2,\cdots,k_L}(v),\\
    & t_{k_1}(v),t_{k_1,k_2}(v),\cdots,t_{k_1,k_2,\cdots,k_L}(v), \ 1\le k_l\le K_l, \ 1\le l\le L,
    \end{split}
    \end{equation}
each of which     is an analytic function of $v^{\frac{1}{M}}$ for $0\le v< \epsilon$, such that 
\begin{enumerate}
\item[(1)] for all $1\le l\le L$ and all  $k_1, \dots, k_L$, it holds that 
\begin{align}
& \order(r_{k_1})<\order(r_{k_1,k_2})<\cdots<\order(r_{k_1,\cdots,k_l})\\
 & \le \min\{\order(s_{k_1,\cdots,k_l}),\order(t_{k_1,\cdots,k_l})\};
\end{align}
\item[(2)] the set $\B_{\epsilon}^+$ can be written as a disjoin union 
\begin{equation}
\B_{\epsilon}^+=
U_0\bigcup 
 \pnorm{
\bigcup_{n=1}^N U_n}
\bigcup U_{N+1},
\end{equation}
where 
\begin{equation}\label{231122e3_6zz}
\begin{split}
& U_0= \{(v,\theta)\in \B^+_{\epsilon}: 0\le \theta<  r_{1}(v)\}, \\
& U_{N+1}=
\{(v,\theta)\in \B^+_{\epsilon}:\theta\ge r_{K_1}(v)\},
\end{split}
\end{equation}
and each $U_n, 1\le n\le N$ can be written in the form 
 \begin{equation}\label{231122e3_7zz}
 \begin{split}
        U_n=\Big\{(v,\theta)\in \B^+_{\epsilon}: & \sum_{l'=1}^{l}r_{k_1,k_2,\cdots,k_{l'}}(v)+s_{k_1,k_2,\cdots,k_l}(v)\le 
        \theta \\
        & 
        <\sum_{l'=1}^{l}r_{k_1,k_2,\cdots,k_{l'}}(v)+t_{k_1,k_2,\cdots,k_{l'}}(v)\Big\}
        \end{split}
    \end{equation}
for some $1\le l\le L$ and some $k_1, \dots, k_l$; 
\item[(3)] on $U_0$ and $U_{N+1}$, it holds that 
\begin{equation}
|P(v, \theta)|\simeq |v|^{a_0}|\theta|^{b_0}, \ \ |P(v, \theta)|\simeq |v|^{a_{N+1}}|\theta|^{b_{N+1}},
\end{equation}
respectively, for some $a_0, b_0, a_{N+1}, b_{N+1}\ge 0$;  on the set $U_n$ given by \eqref{231122e3_7zz}, it holds that 
\begin{align}\label{231122e3_9zz}
    |P(v,\theta)|\simeq |v|^{a_n}\anorm{
    \theta-\sum_{l'=1}^{l}r_{k_1,\cdots,k_{l'}}(v)}^{b_n},
\end{align}  
for some $a_n\ge 0, b_n\ge 0$, where the implicit constant depends only on the analytic function $P$.  
\end{enumerate}
\end{theorem}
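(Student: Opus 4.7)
The plan is to prove Theorem \ref{231120theorem3_3} by induction on a complexity invariant of $P(v,\theta)$, built around the Newton polygon $\mc{N}(P)$ at the origin and an iterated resolution along real Puiseux roots of $P$ in the $\theta$-variable. For the induction hypothesis I would take, after a Weierstrass preparation in $\theta$, the $\theta$-degree of the distinguished polynomial part; this is a nonnegative integer that will strictly decrease at each recursive call. The ramification index $M$ appearing in the statement is chosen once and for all as a common denominator for the slopes of all edges of $\mc{N}(P)$ together with the ramification indices needed by the Newton--Puiseux theorem to realize every real root of the edge-homogeneous polynomials as an analytic function of $v^{1/M}$ on $0 \le v < \epsilon$.

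The base case is when $\mc{N}(P)$ has a single vertex $(a_0, b_0)$. Then $P(v,\theta) = v^{a_0} \theta^{b_0}(c + R(v,\theta))$ with $c \neq 0$ and $R(0,0)=0$, so $|P(v,\theta)| \simeq |v|^{a_0} |\theta|^{b_0}$ on $\B_\epsilon^+$ for $\epsilon$ small enough; one takes $N=0$, $L=0$, with $U_0$ and $U_{N+1}$ trivially covering $\B_\epsilon^+$ and all $r,s,t$ data vacuous. For the inductive step I would list the compact edges $E_1, \dots, E_m$ of $\mc{N}(P)$ in order of increasing $\theta$-slope reciprocal $m_1 < m_2 < \cdots$, form the edge-homogeneous polynomial $P_{E_j}(v,\theta) = \sum_{(\mf{p},\mf{q}) \in E_j} c_{\mf{p},\mf{q}} v^{\mf{p}} \theta^{\mf{q}}$, and substitute $\theta = r v^{m_j}$ to obtain $P_{E_j}(v, rv^{m_j}) = v^{e_j} \pi_{E_j}(r)$. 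Let $\rho_{j,1}, \dots, \rho_{j,K_j}$ be the distinct non-zero real roots of $\pi_{E_j}$. The key geometric picture is that the real zero set of $P$ in $\B_\epsilon^+$ is, to leading order, the union of the curves $\theta = \rho_{j,k} v^{m_j}$.

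Next I would partition $\B_\epsilon^+$ into three types of pieces. On the outer regions $U_0 = \{\theta < \lambda_0 v^{m_1}\}$ and $U_{N+1} = \{\theta > \lambda_\infty v^{m_m}\}$, and on the intermediate ``monomial sectors'' $\{\lambda_j v^{m_j} \le \theta \le \lambda_{j+1}' v^{m_{j+1}}\}$ chosen to stay a definite multiplicative distance away from every root curve, the minimum of $a\mf{p} + b\mf{q}$ over $(\mf{p}, \mf{q}) \in \mc{T}(P)$ is uniquely achieved at a single vertex of $\mc{N}(P)$, which gives $|P(v,\theta)| \simeq |v|^a |\theta|^b$ directly; these contribute pieces $U_n$ of the form \eqref{231122e3_7zz} with $l=1$ and $s_{k_1}, t_{k_1}$ both constant multiples of $v^{m_j}$. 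Near each root curve $\theta = \rho_{j,k} v^{m_j}$ one applies the change of variable $\theta \mapsto \theta + \rho_{j,k} v^{m_j}$ and invokes the inductive hypothesis on $\widetilde P(v,\theta) := P(v, \theta + \rho_{j,k} v^{m_j})$; a Weierstrass preparation computation shows that the $\theta$-degree of $\widetilde P$ drops by (at least) the multiplicity of $\rho_{j,k}$ as a root of $\pi_{E_j}$, so the induction is on strictly smaller data. Pulling the inductive decomposition of $\widetilde P$ back through this shift produces the nested data $r_{k_1, k_2, \dots}$, $s_{k_1, \dots}$, $t_{k_1, \dots}$ of \eqref{231122e3_7zz}, with $r_{k_1} = \rho_{j,k} v^{m_j}$ as the outermost term.

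The order conditions in (1) are then immediate from this nesting: $\order(r_{k_1}) = m_j$, while every $s_{k_1, \dots}$, $t_{k_1, \dots}$ and the inner $r_{k_1, k_2, \dots}$ are controlled by $v^{m'}$ with $m' > m_j$, since after the shift $\theta \mapsto \theta + \rho_{j,k} v^{m_j}$ the Newton polygon of $\widetilde P$ at the vertex corresponding to $E_j$ has edges of strictly larger $\theta$-slope reciprocal (this is exactly the analogue of Lemma \ref{240124lemma5_1} and Lemma \ref{240124lemma5_2}). The main obstacle in the proof is verifying termination and the uniform analyticity statement: one must show that the recursion halts in some finite depth $L$ (which reduces to the strict monotonicity of the Weierstrass $\theta$-degree under each root-removal step) and that the resulting fractional-power expansions $r_{k_1, \dots, k_l}(v)$ are genuine analytic functions of $v^{1/M}$ on $[0, \epsilon)$ rather than merely formal Puiseux series. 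The latter is the classical Newton--Puiseux convergence theorem applied at each edge polynomial, combined with the fact that a finite composition of analytic substitutions preserves analyticity; together these yield the uniform $M$ and uniform bounds $L, N, K_1, \dots, K_L$ claimed in the statement.
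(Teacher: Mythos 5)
Your overall architecture — decompose $\B_\epsilon^+$ into vertex-dominating sectors and edge-dominating sectors, verify monomial comparability on the former and on the good part of the latter, then iterate on neighborhoods of the real roots of the edge polynomials — matches the paper's reduction in Subsections~\ref{231121subsection3_1} and \ref{231121subsection3_2}. But the inductive step and termination argument have a genuine gap.

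You shift by the leading monomial $\theta \mapsto \theta + \rho_{j,k} v^{m_j}$ and assert that ``the $\theta$-degree of $\widetilde P$ drops by (at least) the multiplicity of $\rho_{j,k}$.'' That is false. Take $P(v,\theta) = (\theta - v - v^2)^2$. The edge polynomial is $\pi(r) = (r-1)^2$, so $\rho = 1$ has multiplicity $2$. After the shift $\theta \mapsto \theta + v$ one gets $\widetilde P(v,\theta) = (\theta - v^2)^2$, whose Weierstrass $\theta$-degree is still $2$, and whose new edge polynomial is again $(r-1)^2$ with a root of multiplicity $2$. Iterating, $(\theta - v^2 - v^3)^2$, etc.: the relevant degree never decreases and the recursion never halts. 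This is exactly the situation described by \eqref{240129e1_42kkk} in the paper, and the paper explicitly notes in the Remark at the end of Subsection~\ref{240129subsection1_2} that the leading-monomial shift (Xiao's variant in \cite{Xia17}) does \emph{not} terminate in general. Your termination lemma therefore does not hold as stated, and the induction collapses.

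What Greenblatt actually does, and what the paper reproduces, is to shift by the \emph{full} implicitly defined function $h_{\iota,j}(v)$ solving $\partial_\theta^{s_{\iota,j}-1} P(v, h_{\iota,j}(v)) = 0$ (Lemma~\ref{231121lemma3_3}), rather than by its leading term $\mf{r}_{\iota,j} v^{\mf{m}_\iota}$. After the change of variable $(v,\theta) \mapsto (v, \theta - h_{\iota,j}(v))$ the new function $P_1$ has the property that $\partial_\theta^{s_{\iota,j}-1} P_1(v, 0) \equiv 0$, and the argument of Lemma~\ref{231122lemma3_4} then shows the maximal multiplicity $S$ strictly decreases at each iteration, so the recursion terminates in at most $S+1$ steps. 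In your example, $h(v) = v + v^2$, after which $P_1(v,\theta) = \theta^2$ — a single-vertex Newton diagram and immediate termination. If you insist on the monomial-shift variant, you must instead invoke the stabilization result (\cite[Theorem 4.11]{Xia17}): in the non-terminating scenario the successive $\gamma_{\star+0}$'s stabilize to the form \eqref{240129e1_42kkk}, and one then performs the infinite sum of shifts all at once. Either route must be made explicit; the straightforward degree-drop argument is simply wrong.

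A smaller issue: your invariant ``Weierstrass $\theta$-degree'' must be defined locally relative to the vertex being resolved (i.e.\ $s_{\iota,j}$, the multiplicity bound from Lemma~\ref{231121lemma3_2}), not globally as the distinguished polynomial degree of $P$, since different branches see different local degrees. This is taken care of automatically in the paper by working with $s_{\iota,j}$ directly.
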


Theorem \ref{231120theorem3_3} is a simplified version of \cite[Theorem 2.1]{Gre04}. 
The statement of Theorem \ref{231120theorem3_3} is long and involves complicated notation. After seeing the proof, we find it much easier to accept the functions in \eqref{231122e3_2zz} and their properties. For this reason, we decide to include the part of the proof of Theorem \ref{231120theorem3_3} in \cite{Gre04} that is related to the construction of the functions in \eqref{231122e3_2zz}.

\subsection{A standard reduction}\label{231121subsection3_1}

If the Newton diagram $\mc{N}_d(P)$ contains only one vertex, then the theorem is immediate. We therefore assume that the Newton diagram $\mc{N}_d(P)$ contains more than one vertices. 

We label all vertices of $\mc{N}_d(P)$ by $(p_1, q_1), (p_2, q_2), \dots, (p_{I}, q_{I})$ with 
\begin{equation}
    p_1< p_2< \dots, \ \ q_1>q_2>\dots
\end{equation}
Denote 
\begin{equation}
    m_{\iota}:=
    \frac{p_{\iota+1}-p_{\iota}}{q_{\iota}-q_{\iota+1}}.
\end{equation}
We have 
\begin{equation}
    m_1< m_2< \dots
\end{equation}
Consider the regions 
\begin{equation}\label{231121e3_8}
    \begin{split}
        & \bomega_1:=U_+\cap \{
    (v, \theta): \theta\ge C_P v^{m_1}
    \}, \\
    & 
    \bomega_{\iota}:= U_+\cap \{
    (v, \theta): 
    (C_P)^{-1} v^{m_{\iota-1}}\ge \theta
    \ge 
    C_P v^{
    m_{\iota}
    }
    \}, 1< \iota\le I, \\
    & \bomega_{I+1}:=U_+\cap 
    \{
    (v, \theta): \theta\le (C_P)^{-1} v^{m_I}
    \}.
    \end{split}
\end{equation}
Here and below, we use $C_P$ to denote a large constant that is allowed to depend on $P$. Its precise value is not important, and may vary from line to line. For each $1\le \iota\le I+1$, that $C_P$ is large enough guarantees that the monomial $v^{p_{\iota}}\theta^{q_{\iota}}$ dominates  on $\bomega_{\iota}$. More precisely, we have 
\begin{equation}
    |P(v, \theta)|\simeq 
v^{p_{\iota}}\theta^{q_{\iota}},
\end{equation}
for $(\theta, v)\in \bomega_{\iota}$. This shows that $\bomega_{\iota}$ is of the form \eqref{231122e3_6zz} or \eqref{231122e3_7zz} for every $\iota$.  Therefore, to prove the theorem, we only need to consider regions outside the union of $\bomega_{\iota}$. \\

For $1\le \iota\le I$, denote 
\begin{equation}
    \bomega'_{\iota}:=
    U_+\cap 
    \{
    (v, \theta): 
    (C_P)^{-1}v^{m_{\iota}}\le \theta\le C_P v^{m_{\iota}}
    \},
\end{equation}
where $C_P$ is the same as that in \eqref{231121e3_8} and we therefore have 
\begin{equation}\label{231121e3_11}
    \pnorm{\bigcup_{1\le \iota\le I+1} \bomega_{\iota}}
    \bigcup 
    \pnorm{\bigcup_{1\le \iota\le I} \bomega'_{\iota}}
\end{equation}
is the first quadrant. We denote by $E_{\iota}$ the edge of the Newton diagram that connects the vertices $(p_{\iota}, q_{\iota})$ and $(p_{\iota+1}, q_{\iota+1})$. To make it easier to remember notation, we denote 
\begin{equation}
    (p_{E_{\iota}, l}, q_{E_{\iota}, l}):=(p_{\iota}, q_{\iota}), \ \ (p_{E_{\iota}, r}, q_{E_{\iota}, r}):=(p_{\iota+1}, q_{\iota+1}).
\end{equation}
Denote 
\begin{equation}
    c_{p, q}:= \frac{1}{p ! q !} \partial_{v}^p \partial_{\theta}^q P(0,0).
\end{equation}
Moreover, denote
\begin{equation}
P_{E_{\iota}}(v,\theta)=\sum_{(p,q)\in E_{\iota}}c_{p,q}v^p\theta^q.
\end{equation}
Note that for $(v, \theta)\in \bomega'_{\iota}$, it holds that 
\begin{align}
    |v^{p_{E_{\iota},l}}\theta^{q_{E_{\iota},l}}|\simeq |v^{p_{E_{\iota},r}}\theta^{q_{E_{\iota},r}}|,
\end{align}
that is, 
\begin{align}
    |\theta|\simeq |v|^{m_{\iota}}.
\end{align}
Consider the curve 
$\theta=rv^{m_{\iota}}$, where $r\in [(C_P)^{-1}, C_P]$. On this curve, 
\begin{align}\label{231121e3_17}
    P_{E_{\iota}}(v, \theta)
    =P_{E_{\iota}}(v,rv^{m_{\iota}})=v^{e_{\iota}}\sum_{(p,q)\in E_{\iota}}c_{p,q} r^q:=v^{e_{\iota}}P_{E_{\iota}}(r), 
\end{align}
where
\begin{equation}
e_{\iota}=p+qm_{\iota}
\end{equation}
 and $(p, q)$ can be taken to be an arbitrary point in $E_{\iota}$.

The simple calculation in \eqref{231121e3_17} suggests that the zeros of the single variable polynomial $P_{E_{\iota}}(r)$ plays particular roles.  Let
\begin{equation}
\{r_{\iota, j}\}_{j=1}^{J_{\iota}}\subset \R
\end{equation}
 be the collection of distinct zeros of $P_{E_{\iota}}(r)$. For each $\iota$ and $j$, let $s_{\iota, j}$ be the multiplicity of $r_{\iota, j}$.
 Denote 
 \begin{equation}\label{231122e3_26pp}
 S:= \max_{\iota, j} s_{\iota, j}. 
 \end{equation}
  Recall that $r\in [(C_P)^{-1}, C_P]$. Denote 
\begin{equation}
    R_{\iota}:=[(C_P)^{-1}, C_P].
\end{equation}
Moreover, 
\begin{equation}
    R_{\iota, \mathrm{bad}}:= 
    R_{\iota} \bigcap \pnorm{
    \bigcup_{j=1}^{J_{\iota}}
    (r_{\iota, j}-(C_P)^{-2}, r_{\iota, j}+ (C_P)^{-2})
    }, \ \ R_{\iota, \mathrm{good}}:=R_{\iota}\setminus R_{\iota, \mathrm{bad}}. 
\end{equation}
We decompose $\bomega'_{\iota}$ into two parts: 
\begin{equation}\label{231121e3_20}
\begin{split}
    & \bomega'_{\iota, \mathrm{good}}:= 
    \{
    (v, \theta): \theta=rv^{m_{\iota}}, r\in R_{\iota, \mathrm{good}}
    \}, \\
    &  \bomega'_{\iota, \mathrm{bad}}:= 
    \{
    (v, \theta): \theta=rv^{m_{\iota}}, r\in R_{\iota, \mathrm{bad}}
    \}.
    \end{split}
\end{equation}
On $\bomega'_{\iota, \mathrm{good}}$, it is elementary to see that 
\begin{equation}
    |P(v, \theta)|\simeq v^p \theta^q,
\end{equation}
for every $(p, q)\in E_{\iota}$, and therefore $\bomega'_{\iota, \mathrm{good}}$ can be written in the form \eqref{231122e3_7zz} and satisfies \eqref{231122e3_9zz}. 

This finishes the reduction, and from now on we will only focus on the regions $\bomega'_{\iota, \mathrm{bad}}$.

\subsection{Structures of bad regions}\label{231121subsection3_2}

In this subsection, we will focus on $(v, \theta)$ in the bad regions $\bomega'_{\iota, \mathrm{bad}}$. Denote 
\begin{equation}
    \bomega'_{\iota, j}:=
    \{
    (v, \theta): 
    \theta= rv^{m_{\iota}}, r\in R_{\iota}, |r-r_{\iota, j}|\le (C_P)^{-2}
    \},
\end{equation}
and we have 
\begin{equation}\label{231121e3_23}
    \bomega'_{\iota, \mathrm{bad}}=\bigcup_{j=1}^{J_{\iota}} \bomega'_{\iota, j}. 
\end{equation}
Recall the polynomial $P_{E_{\iota}}(r)$, its roots $r_{\iota, j}$ and multiplicity $s_{\iota, j}$. 

\begin{lemma}[\cite{Gre04}, Lemma 2.3]\label{231121lemma3_2}
    For every $1\le \iota\le I$ and every $1\le j\le J_{\iota}$, it holds that 
    \begin{equation}\label{231121e3_24}
        s_{\iota, j}\le q_{E_{\iota}, l}- q_{E_{\iota}, r},
    \end{equation}
    and\footnote{This is a very crude bound, and it is stated here to say that the exponent on the right hand side of \eqref{231121e3_26} is non-negative. }
    \begin{equation}\label{231121e3_25}
        s_{\iota, j} m_{\iota}\le e_{\iota}. 
    \end{equation}
    On the bad region $\bomega'_{\iota, j}$, we have
    \begin{align}\label{231121e3_26}
        |\partial_{\theta}^{s_{\iota, j}}P(v,\theta)|\simeq v^{e_{\iota}-s_{\iota, j}m_{\iota}}.
    \end{align}
    Furthermore, $s_{\iota, j}$ is the minimal value of $s$ for which an identity of the form
    \begin{align}\label{231121e3_27}
        |\partial_{\theta}^sP(v,\theta)|\simeq v^{f}
    \end{align}
    holds on $\bomega'_{\iota, j}$ for some $f\ge 0$.
\end{lemma}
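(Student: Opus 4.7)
The plan is to reduce everything to the behaviour of the single-variable polynomial $P_{E_\iota}(r)$ near its root $r_{\iota,j}$, using the fact that on $\bomega'_\iota$ the edge contribution $P_{E_\iota}$ dominates the remaining terms of the Taylor series of $P$. For \eqref{231121e3_24}, I would factor the largest common power of $r$ out and write $P_{E_\iota}(r) = r^{q_{E_\iota,r}} Q(r)$, where $Q$ is a polynomial of degree $q_{E_\iota,l} - q_{E_\iota,r}$. Since $r_{\iota,j}\in [(C_P)^{-1},C_P]$ is nonzero, it must be a root of $Q$, so $s_{\iota,j} \le \deg Q$, which is exactly \eqref{231121e3_24}. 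Combining this with $e_\iota = p_{E_\iota,l} + q_{E_\iota,l} m_\iota = p_{E_\iota,r} + q_{E_\iota,r} m_\iota$ gives $s_{\iota,j} m_\iota \le (q_{E_\iota,l}-q_{E_\iota,r}) m_\iota = p_{E_\iota,r} - p_{E_\iota,l} \le e_\iota$, establishing \eqref{231121e3_25}.

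For \eqref{231121e3_26}, I would differentiate $P_{E_\iota}(v,\theta)$ term by term and substitute $\theta = r v^{m_\iota}$; since $p + q m_\iota = e_\iota$ for every $(p,q) \in E_\iota$, this collapses to the clean identity $\partial_\theta^s P_{E_\iota}(v, rv^{m_\iota}) = v^{e_\iota - s m_\iota} P_{E_\iota}^{(s)}(r)$. As $r_{\iota,j}$ is a root of $P_{E_\iota}(r)$ of multiplicity exactly $s_{\iota,j}$, one has $P_{E_\iota}^{(s_{\iota,j})}(r_{\iota,j}) \neq 0$, hence $|P_{E_\iota}^{(s_{\iota,j})}(r)| \simeq 1$ on $|r-r_{\iota,j}| \le (C_P)^{-2}$ once $C_P$ is large enough to isolate $r_{\iota,j}$ from the other roots. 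For the remainder $P - P_{E_\iota}$, every $(p,q) \in \mc{T}(P) \setminus E_\iota$ satisfies $p + q m_\iota > e_\iota$; since $m_\iota$ is rational (the Newton vertices live in $\N^2$), the gap is uniformly bounded below by some $\delta_0 > 0$. Combined with analyticity and $\theta \simeq v^{m_\iota}$ on $\bomega'_\iota$, the error after $s_{\iota,j}$ derivatives is $O(v^{e_\iota - s_{\iota,j} m_\iota + \delta_0})$, which is negligible against the main term and yields \eqref{231121e3_26}.

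For the minimality statement, for $s < s_{\iota,j}$ the derivative $P_{E_\iota}^{(s)}(r)$ vanishes at $r_{\iota,j}$ to order $s_{\iota,j}-s$, so $|P_{E_\iota}^{(s)}(r)| \simeq |r-r_{\iota,j}|^{s_{\iota,j}-s}$ on the interval. At the slice $r = r_{\iota,j}$ of $\bomega'_{\iota,j}$ the main term thus vanishes and only the error of order $v^{e_\iota - s m_\iota + \delta_0}$ survives, whereas for $r$ a fixed distance from $r_{\iota,j}$ one has $|\partial_\theta^s P(v,\theta)| \simeq v^{e_\iota - s m_\iota}$. The ratio is of order $v^{-\delta_0}$ as $v \to 0$, forbidding the comparison $|\partial_\theta^s P| \simeq v^f$ for any fixed $f$. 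The main obstacle is to justify the uniform gap $\delta_0$ and to control the infinite tail of $P$ uniformly on the bad strip; the rationality of $m_\iota$ together with the convergence of the Taylor series of $P$ is what makes this go through.
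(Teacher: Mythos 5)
Your proposal is correct and takes essentially the same approach as the paper: both exploit that on $\bomega'_{\iota,j}$ the edge part $P_{E_\iota}$ dominates, that under the substitution $\theta = r v^{m_\iota}$ (the paper's $\theta'=\theta/v^{m_\iota}$) the edge part collapses to $v^{e_\iota - s m_\iota}$ times a one-variable polynomial in $r$ vanishing to order exactly $s_{\iota,j}$ at $r_{\iota,j}$, and that the non-edge terms contribute a lower-order remainder. The only minor differences are cosmetic: you derive \eqref{231121e3_25} through $p_{E_\iota,r}-p_{E_\iota,l}\le e_\iota$ rather than the paper's $m_\iota q_{E_\iota,l}\le e_\iota$, you quantify the error via the rational gap $\delta_0$ rather than the paper's softer $o(v^{e_\iota})$ bookkeeping, and you supply the minimality argument in \eqref{231121e3_27} which the paper explicitly omits as ``proven similarly.''
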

\begin{proof}[Proof of Lemma \ref{231121lemma3_2}]
Recall that 
\begin{equation}
    P_{E_{\iota}}(v, \theta)=\sum_{
    (p, q)\in E_{\iota}
    }
    v^p\theta^q,
\end{equation}
and 
\begin{equation}
    P_{E_{\iota}}(v, r v^{m_{\iota}})
    =
    \sum_{(p, q)\in E_{\iota}}
    c_{p, q}
    v^p r^q v^{qm_{\iota}}=
    v^{e_{\iota}} \sum_{(p, q)\in E_{\iota}}
    c_{p, q}
    r^q. 
\end{equation}
    The highest order and lowest power of $r$ appearing in $P_{E_{\iota}}(r)$ is $r^{q_{E_{\iota}, l}}$ and $r^{q_{E_{\iota}, r}}$. From this, the bound \eqref{231121e3_24} follows immediately. We also have
    \begin{align}
        m_{
        \iota
        }s_{
        \iota, j}
        \leq m_{\iota}
        (q_{E_{\iota}, l}- q_{E_{\iota}, r})\leq m_{\iota}
        q_{E_{\iota}, l}\leq p_{E_{\iota}, l}+m_{\iota}q_{E_{\iota}, l}=e_{\iota}.
    \end{align}
    This finishes the proof of \eqref{231121e3_25}.\\

Next we make the coordinate changes $(v,\theta)=(v,v^{m_{\iota}}\theta')$. Then on the bad set $\bomega'_{\iota, j}$ we have
\begin{align}               
P(v,\theta)=P(v,v^{m_{\iota}}\theta')=v^{e_{\iota}}
\bnorm{
\sigma_{\iota, j}
(\theta'-r_{\iota, j})^{s_{\iota, j}}+O
\pnorm{
(\theta'-r_{\iota, j})^{s_{\iota, j}+1}
}
}+o(v^{e_{\iota}})
\end{align}
for some $\sigma_{\iota, j} \neq 0$.
Assume $s\leq s_{\iota, j}$. By the chain rule, we obtain 
\begin{align}\label{231121e3_32}
\partial_{\theta}^sP(v,\theta)=v^{e_{\iota}-m_{\iota} s}
\bnorm{
\rho_{\iota, j, s}(\theta'-r_{\iota, j})^{s_{\iota, j}-s}+O\pnorm{
(\theta'-r_{\iota, j})^{s_{\iota, j}+1-s}
}
}+o(v^{e_{\iota}-sm_{\iota}}),
\end{align}
for some $\rho_{\iota, j, s}\neq 0$. 
In particular, if we choose $s=s_{\iota, j}$, then we obtain 
\begin{align}
    \partial_{\theta}^{s_{\iota, j}}P(v,\theta)=v^{e_{\iota}-m_{\iota} s_{\iota, j}}
    \bnorm{
    \rho_{s_{\iota, j}}+O(\theta'-r_{\iota, j})
    }+o(v^{e_{\iota}-m_{\iota} s_{\iota, j}})
\end{align}
This finishes the proof of \eqref{231121e3_26}. The last statement \eqref{231121e3_27} can be proven similarly, and we leave out the proof. This finishes the proof of Lemma \ref{231121lemma3_2}.

\end{proof}

\begin{lemma}[\cite{Gre04}, Lemma 2.3]\label{231121lemma3_3}
For every $1\le \iota\le I$ and every $1\le j\le J_{\iota}$,  if we take $v$ to be sufficiently small, then there is a unique $h_{\iota, j}(v)$ with
\begin{equation}
\partial_{\theta}^{s_{\iota, j}-1}P(v,h_{\iota, j}(v))=0.
\end{equation}
 The function $h_{\iota, j}(v)$ is a real-analytic function of
 \begin{equation}
 v^{\frac{1}{q_{E_{\iota}, l}-q_{E_{\iota}, r}}}
 \end{equation}
  whose leading term is given by
  \begin{equation}
  r_{\iota, j}v^{m_{\iota}}= r_{\iota, j}
  v^{
\frac{
p_{E_{\iota}, r}-p_{E_{\iota}, l}
}{q_{E_{\iota}, l}-q_{E_{\iota}, r}}
  }.
  \end{equation}
   Moreover, on $\bomega'_{\iota, j}$, we have
    \begin{align}
    |\partial_{\theta}^{s_{\iota, j}-1}P(v,\theta)|\simeq v^{e_{\iota}-s_{\iota, j}m_{\iota}}|\theta-h_{\iota, j}(v)|   
    \end{align}
    
\end{lemma}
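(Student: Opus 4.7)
The plan is to run the same change of coordinates as in Lemma \ref{231121lemma3_2}, reduce the equation $\partial_\theta^{s_{\iota,j}-1}P(v,\theta)=0$ to one that the implicit function theorem (up to a fractional power of $v$) can handle, then get the size estimate from a single-variable Taylor expansion combined with Lemma \ref{231121lemma3_2}.

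First, I would set $\theta=v^{m_\iota}\theta'$ and specialize \eqref{231121e3_32} to $s=s_{\iota,j}-1$. This gives, on $\bomega'_{\iota,j}$,
\begin{equation}
\partial_\theta^{s_{\iota,j}-1}P(v,\theta)=v^{e_\iota-m_\iota(s_{\iota,j}-1)}\Bigl[\rho_{\iota,j,s_{\iota,j}-1}(\theta'-r_{\iota,j})+O\bigl((\theta'-r_{\iota,j})^2\bigr)\Bigr]+o\bigl(v^{e_\iota-m_\iota(s_{\iota,j}-1)}\bigr),
\end{equation}
with $\rho_{\iota,j,s_{\iota,j}-1}\neq 0$. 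Factoring out $v^{e_\iota-m_\iota(s_{\iota,j}-1)}$ and calling the resulting analytic expression $G(v^{1/N},\theta')$ for a suitable common denominator $N$ (accounting for the fractional exponent $m_\iota$), the equation becomes $G(v^{1/N},\theta')=0$ with $\partial_{\theta'}G(0,r_{\iota,j})\neq 0$.

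Next I would apply the analytic implicit function theorem in the variable $u:=v^{1/N}$: there is a unique analytic solution $\theta'=\phi(u)$ near $u=0$ with $\phi(0)=r_{\iota,j}$. Undoing the substitution $\theta=v^{m_\iota}\theta'$, I define
\begin{equation}
h_{\iota,j}(v):=v^{m_\iota}\phi(v^{1/N}),
\end{equation}
which is real-analytic in $v^{1/N}$ with $N=q_{E_\iota,l}-q_{E_\iota,r}$ (since $m_\iota$ has this denominator by construction), and whose leading term is $r_{\iota,j}v^{m_\iota}$, as required. Uniqueness follows from the uniqueness in the implicit function theorem together with the fact that $C_P$ was chosen so the balls around the distinct roots $r_{\iota,j}$ are pairwise disjoint, so a zero of $\partial_\theta^{s_{\iota,j}-1}P$ inside $\bomega'_{\iota,j}$ must satisfy $\theta'$ close to $r_{\iota,j}$.

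Finally, for the size estimate on $\bomega'_{\iota,j}$, I would Taylor expand in $\theta$ around $\theta=h_{\iota,j}(v)$:
\begin{equation}
\partial_\theta^{s_{\iota,j}-1}P(v,\theta)=\bigl(\theta-h_{\iota,j}(v)\bigr)\int_0^1 \partial_\theta^{s_{\iota,j}}P\bigl(v,h_{\iota,j}(v)+t(\theta-h_{\iota,j}(v))\bigr)\,dt.
\end{equation}
For $(v,\theta)\in\bomega'_{\iota,j}$, the intermediate point stays in $\bomega'_{\iota,j}$, so \eqref{231121e3_26} from Lemma \ref{231121lemma3_2} gives $|\partial_\theta^{s_{\iota,j}}P|\simeq v^{e_\iota-s_{\iota,j}m_\iota}$ throughout the integral, yielding the desired comparison. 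The main obstacle is tracking the fractional powers of $v$ so as to extract a genuinely analytic solution in $v^{1/N}$ for the correct $N$; this is exactly where Puiseux-type reasoning enters, but given the nondegeneracy $\rho_{\iota,j,s_{\iota,j}-1}\neq 0$ built into Lemma \ref{231121lemma3_2}, the standard analytic implicit function theorem suffices once the substitution $u=v^{1/N}$ is in place.
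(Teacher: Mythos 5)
Your proof is correct and follows the same route as the paper: apply \eqref{231121e3_32} with $s=s_{\iota,j}-1$, then invoke the analytic implicit function theorem in the variable $u=v^{1/N}$. The paper's own proof is a single sentence and leaves the final size estimate implicit; your fleshed-out version, including the fundamental-theorem-of-calculus factorization combined with \eqref{231121e3_26} from Lemma \ref{231121lemma3_2}, is the natural way to fill in those details.
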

\begin{proof}[Proof of Lemma \ref{231121lemma3_3}]
The proof of this lemma follows from applying \eqref{231121e3_32} with $s=s_{\iota, j}-1$, and then applying the implicit function theorem. 
\end{proof}

If we are in the case where 
\begin{equation}\label{231121e3_38}
s_{\iota, j}=1, \ \ \forall \iota, j,
\end{equation}
then Lemma \ref{231121lemma3_3} tells us that on each $\bomega'_{\iota, j}$, we have
    \begin{align}
    |P(v,\theta)|\simeq v^{e_{\iota}-s_{\iota, j}m_{\iota}}|\theta-h_{\iota, j}(v)|,
    \end{align}
which is exactly what Theorem \ref{231120theorem3_3} says. Therefore, we only need to consider cases where \eqref{231121e3_38} fails, that is, the multiplicity of some root is bigger than one. 

Let us summarize this subsection by writing down the decomposition we have done so far. Recall \eqref{231121e3_11}, \eqref{231121e3_20} and \eqref{231121e3_23}. We have decomposed the first quadrant into 
\begin{equation}\label{231121e3_40}
\begin{split}
&     \pnorm{\bigcup_{1\le \iota\le I+1} \bomega_{\iota}}
    \bigcup 
    \pnorm{\bigcup_{1\le \iota\le I} \bomega'_{\iota}}\\
    &     =\pnorm{\bigcup_{1\le \iota\le I+1} \bomega_{\iota}}
    \bigcup 
    \pnorm{\bigcup_{1\le \iota\le I} \bomega'_{\iota, \mathrm{good}}}  \bigcup 
    \pnorm{\bigcup_{1\le \iota\le I} \bomega'_{\iota, \mathrm{bad}}}\\
    &     =\pnorm{\bigcup_{1\le \iota\le I+1} \bomega_{\iota}}
    \bigcup 
    \pnorm{\bigcup_{1\le \iota\le I} \bomega'_{\iota, \mathrm{good}}}  \bigcup 
    \pnorm{\bigcup_{1\le \iota\le I} \bigcup_{j=1}^{J_{\iota}}
    \bomega'_{\iota, j}}.
\end{split}
\end{equation}
Moreover, there exists a positive integer $W$ such that each set from the most right hand side of \eqref{231121e3_40} can be written in the form 
\begin{equation}\label{231121e3_41}
\{(v, \theta): 
c_{w} v^{
\frac{w}{W}
}
<  \theta< 
C_{w'} v^{
\frac{w'}{W}
}
\},
\end{equation}
where $w\le w'$ are positive integers, and $c_{w}, C_{w'}\in [0, \infty]$. In particular, each $\bomega'_{\iota, j}$ is of the form \eqref{231121e3_41} with
\begin{equation}
w=w', \ \ 0< c_{w}< C_{w'}< \infty. 
\end{equation}
In the next subsection, we will decompose $\bomega'_{\iota, j}$ for those $\iota, j$ for which \eqref{231121e3_38} fails.  

\subsection{An iteration algorithm}\label{231122subsection3_3}

Take $\iota, j$ with $s_{\iota, j}>1$. We will decompose the bad region $\bomega'_{\iota, j}$ into more smaller regions. We perform the change of variables 
\begin{equation}
(v_1,\theta_1)=(v,\theta-h_{\iota, j}(v)),
\end{equation}
and consider 
\begin{equation}
P_1(v_1, \theta_1):= P(v_1, \theta_1+h_{\iota, j}(v_1)).
\end{equation}
We use $\bomega^1_{\iota, j}$ to denote the image of $\bomega'_{\iota, j}$ under the above transformation. We know that $\bomega^1_{\iota, j}$ is of the form 
\begin{equation}\label{231121e3_45}
-h^{1, -}_{\iota, j}(v_1)< \theta_1< h^{1, +}_{\iota, j}(v_1),
\end{equation}
where both $h^{1, -}_{\iota, j}(v_1)$ and $h^{1, +}_{\iota, j}(v_1)$ are non-negative analytic functions (for non-negative and small $v_1$) in 
 \begin{equation}
 v_1^{\frac{1}{q_{E_{\iota}, l}-q_{E_{\iota}, r}}}
 \end{equation}
  whose leading term is of the same order as 
  \begin{equation}
  v_1^{m_{\iota}}= 
  v_1^{
\frac{
p_{E_{\iota}, r}-p_{E_{\iota}, l}
}{q_{E_{\iota}, l}-q_{E_{\iota}, r}}
  }.
  \end{equation}
 We will break \eqref{231121e3_45} into two parts, and consider 
 \begin{equation}
 -h^{1, -}_{\iota, j}(v_1)< \theta_1<0, \ \ 0< \theta_1< h^{1, +}_{\iota, j}(v_1),
 \end{equation}
 separately. Without loss of generality, we consider the latter case. 
For the new analytic function $P_1(v_1, \theta_1)$, we repeat the same argument as in Subsection \ref{231121subsection3_1} and Subsection \ref{231121subsection3_2}, and arrive at a similar decomposition as in \eqref{231121e3_40}. The only difference is that we will intersect each of the resulting sets with $\bomega^1_{\iota, j}$. Each of the resulting set is of the form 
\begin{equation}\label{231122e3_49}
\{(v_1, \theta_1): 
c_{w_1} v_1^{
\frac{w_1}{W_1}
}
<  \theta_1< 
C_{w'_1} v_1^{
\frac{w'_1}{W_1}
}
\},
\end{equation}
where $w_1, w'_1, W_1$ are positive integers, and $c_{w_1}, C_{w'_1}\in [0, \infty)$. Moreover, we can always without loss of generality assume that 
\begin{equation}
\frac{w'_1}{W_1}\ge m_{\iota}.
\end{equation}
If we change variables back to $(v, \theta)$, then \eqref{231122e3_49} can be written as 
\begin{equation}
\{(v, \theta): 
h_{\iota, j}(v)+
c_{w_1} v^{
\frac{w_1}{W_1}
}
<  \theta< 
h_{\iota, j}(v)+
C_{w'_1} v^{
\frac{w'_1}{W_1}
}
\},
\end{equation}
which is of the form \eqref{231122e3_7zz}. 

\begin{lemma}[\cite{Gre04}, Lemma 2.4]\label{231122lemma3_4}
Recall the multiplicity parameter $S$ defined in \eqref{231122e3_26pp}. The above algorithm will terminate after repeating the argument in Subsection \ref{231121subsection3_1} and Subsection \ref{231121subsection3_2} at most $S+1$ times. 
\end{lemma}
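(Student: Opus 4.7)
The plan is to show that a single pass through the reduction of Subsections~\ref{231121subsection3_1}--\ref{231121subsection3_2} strictly decreases the maximum multiplicity of edge-polynomial roots on any bad region with current multiplicity at least two; once the maximum multiplicity drops to one, Lemma~\ref{231121lemma3_3} already writes every remaining bad region in the form~\eqref{231122e3_7zz}, so the algorithm terminates on the very next pass. If $S_k$ denotes the maximum multiplicity surviving after $k$ passes, this yields $S_0 = S > S_1 > \cdots > 1$ in at most $S - 1$ passes, followed by one final clean-up pass, for at most $S + 1$ passes in total.

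To establish the monotonicity, fix a bad region $\bomega'_{\iota, j}$ with multiplicity $s := s_{\iota, j} \ge 2$ and pass to the new function $P_1(v_1, \theta_1) := P(v_1, \theta_1 + h_{\iota, j}(v_1))$ supplied by Lemma~\ref{231121lemma3_3}, defined on the image $\bomega^1_{\iota, j}$. Since by the chain rule $\partial_{\theta_1}^s P_1(v_1, \theta_1) = \partial_\theta^s P(v_1, \theta_1 + h_{\iota, j}(v_1))$, and the right-hand side is monomial-like on $\bomega'_{\iota, j}$ by~\eqref{231121e3_26}, we obtain
\begin{equation}
|\partial_{\theta_1}^s P_1(v_1, \theta_1)| \simeq v_1^{\,e_\iota - s m_\iota}
\end{equation}
throughout $\bomega^1_{\iota, j}$. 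Applying the reduction of Subsections~\ref{231121subsection3_1}--\ref{231121subsection3_2} to $P_1$ inside $\bomega^1_{\iota, j}$, every new bad subregion carries a multiplicity $s'$; by the minimality clause at the end of Lemma~\ref{231121lemma3_2}, $s'$ is the least integer for which $|\partial_{\theta_1}^{s'} P_1|$ is a pure power of $v_1$ on that subregion. Since the $s$-th derivative already enjoys such a bound, the easy inequality $s' \le s$ follows.

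The crux --- and the main technical obstacle --- is upgrading this to the strict inequality $s' \le s - 1$. The underlying idea is that the change of variables precisely eliminates the unique tangency curve $\theta = h_{\iota, j}(v)$ along which $\partial_\theta^{s-1} P$ vanished; any surviving bad subregion $\bomega^1_{\iota', j'}$ therefore sits along a curve $\theta_1 = h^1_{\iota', j'}(v_1)$ disjoint from $\theta_1 \equiv 0$, and the argument of Lemma~\ref{231121lemma3_3} applied to $P_1$ produces such a curve only at an edge $E'$ of strictly steeper slope $m' > m_\iota$ (because $h_{\iota,j}$ has leading order $v^{m_\iota}$). Expanding $P_1$ in Taylor series at the root $r'_{\iota', j'}$ of the new edge polynomial $P_{1, E'}(r')$ and combining with the non-vanishing bound for $\partial_{\theta_1}^{s} P_1$ obtained above, one reads off that the vanishing order of $P_{1, E'}(r')$ at $r'_{\iota', j'}$ cannot reach $s$: else $\partial_{\theta_1}^{s-1} P_1$ would acquire a full zero along a curve separate from $\theta_1=0$, contradicting the uniqueness in Lemma~\ref{231121lemma3_3}. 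Granting this strict-decrease step, an induction on $S$ completes the proof.
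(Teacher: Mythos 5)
Your argument fleshes out exactly the route the paper sketches (and defers to Greenblatt's Lemma 2.4 for): the multiplicity drops strictly at each pass, which you establish by observing that a multiplicity-$s$ root of the new edge polynomial $P_{1,E'}(r')$ would produce, via Lemma~\ref{231121lemma3_3} applied to $P_1$, a second zero curve of $\partial_\theta^{s-1}P$ inside $\bomega'_{\iota,j}$ besides $\theta=h_{\iota,j}(v)$, contradicting the comparability $|\partial_\theta^{s-1}P(v,\theta)|\simeq v^{\,e_\iota-s_{\iota,j}m_\iota}\,|\theta-h_{\iota,j}(v)|$ asserted in Lemma~\ref{231121lemma3_3}. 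One minor slip: $(S-1)$ strict-decrease passes followed by one clean-up pass gives $S$ in total, not $S+1$, so your count in fact beats the stated bound and the final arithmetic in your opening paragraph is off by one.
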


Roughly speaking, Lemma \ref{231122lemma3_4} says that every time when we repeat the argument in Subsection \ref{231121subsection3_1} and Subsection \ref{231121subsection3_2}, the multiplicity parameter $s_{\iota, j}$ that is fixed at the beginning of Subsection \ref{231122subsection3_3} will drop by at least one, and therefore the algorithm will terminate after finitely many steps. In the end, we collect all the resulting functions and sets and write them in the form as in Theorem \ref{231120theorem3_3}.

\section{Sharp \texorpdfstring{$L^p$}{} bounds: Vertices-dominating}\label{240124section4}

Let $\gamma$ be an analytic function satisfying the assumptions \eqref{231122e1_3kk} and \eqref{231122e1_4kk}. Moreover, we assume that $\gamma$ is not strongly degenerate. In the rest of the paper we will prove that there exists $\epsilon>0$ such that 
\begin{equation}\label{240102e4_1}
\norm{
\mc{M}_{\gamma} f
}_{
L^p(\R^2)
} \lesim_{\gamma, a, p} \norm{f}_{L^p(\R^2)},
\end{equation}
for all
\begin{equation}
p> \max\{2, \mathfrak{D}_{\gamma}\},
\end{equation}
and for all smooth functions $a$ supported in $\B_{\epsilon}$, the ball of radius $\epsilon$ centered at the origin.  \\

Consider the reduced Newton diagram $\mc{R}\mc{N}_d(\gamma)$. 
Denote 
\begin{equation}\label{240106e4_3}
\gamma_{\mf{r}\mf{e}}(v, \theta):=
\sum_{
(\mf{p}, \mf{q}): \mf{q}\neq 0
}
c_{\mf{p}, \mf{q}} v^{\mf{p}} \theta^{\mf{q}},
\end{equation}
where 
\begin{equation}\label{240102e4_10}
c_{\mfp, \mfq}:=
\frac{1}{\mfp!\mfq!}
\partial_v^{\mfp} \partial_{\theta}^{\mfq} 
\gamma(0, 0),
\end{equation}
and $\mf{r}\mf{e}$ is short for ``reduced". 
We label all the vertices of $\mc{R}\mc{N}_d(\gamma)$ by 
\begin{equation}\label{240108e4_5}
(\mf{p}_1, \mf{q}_1), (\mf{p}_2, \mf{q}_2), \dots, (\mf{p}_{I_1}, \mf{q}_{I_1}),
\end{equation}
for some positive integer $I_1$, 
with 
\begin{equation}
\mf{p}_1<\dots< \mf{p}_{I_1}.
\end{equation}
Denote 
\begin{equation}
\mf{m}_{i}=
\frac{
\mf{p}_{i+1}-\mf{p}_i
}{
\mf{q}_{i}-\mf{q}_{i+1}
}.
\end{equation}
Consider the regions 
\begin{equation}\label{240124e4_8}
\begin{split}
& \mf{O}_1:=
\B_{\epsilon}\cap 
\{
(v, \theta):
|\theta|\ge C_{\gamma}|v|^{\mf{m}_1}
\}, \\
& 
\mf{O}_i:=
\B_{\epsilon}
\cap 
\{
(v, \theta):
(C_{\gamma})^{-1} |v|^{\mf{m}_{i-1}}\ge |\theta|\ge C_{\gamma}
|v|^{\mf{m}_i}
\}, \ \ 1< i\le I_1,\\
& \mf{O}_{I_1+1}:=
\B_{\epsilon}\cap 
\{
(v, \theta): 
|\theta|\le (C_{\gamma})^{-1} |v|^{\mf{m}_{I_1}}
\}.
\end{split}
\end{equation}
Here we use $C_{\gamma}$ to denote a large constant that is allowed to depend on $\gamma$. If $C_{\gamma}$ is chosen to be sufficiently large, then for every $1\le i\le I_1+1$, we always have that 
\begin{equation}
|\gamma_{
\mf{r}\mf{e}
}(v, \theta)|\simeq 
|v|^{
\mf{p}_i
} 
|\theta|^{
\mf{q}_i
}, \ \ (\theta, v)\in \mf{O}_i.
\end{equation}
For $1\le i\le I_{1}$, denote 
\begin{equation}\label{240220e5_10}
\mf{O}'_i:=
\B_{\epsilon}\cap 
\{
(v, \theta):
(C_{\gamma})^{-1}|v|^{\mf{m}_i}\le |\theta|\le 
C_{\gamma} |v|^{\mf{m}_i}
\}.
\end{equation}
We have 
\begin{equation}
    \B_{\epsilon}=
    \pnorm{
    \bigcup_{i=1}^{I_1+1}\mf{O}_i
    }\bigcup \pnorm{
    \bigcup_{i=1}^{I_1} \mf{O}'_i
    }.
\end{equation}
We use $\mf{E}_i$ to denote the edge of the Newton diagram that connects the vertices $(\mf{p}_i, \mf{q}_i)$ and $(\mf{p}_{i+1}, \mf{q}_{i+1})$. Moreover, denote 
\begin{equation}
\gamma_{\mf{E}_i}(v, \theta):=
\sum_{
(\mfp, \mfq)\in \mf{E}_i
}
c_{\mfp, \mfq} v^{\mfp} \theta^{\mfq}.
\end{equation}
For a given 
\begin{equation}
r\in [
(C_{\gamma})^{-1}, C_{\gamma}
],
\end{equation}
consider the curve 
\begin{equation}
\{(v, \theta):
 \theta= rv^{\mf{m}_i}
\}.
\end{equation}
On this curve, we have 
\begin{equation}
\gamma_{
\mf{E}_i
}(v, \theta)=
v^{\mf{e}_i}
\sum_{(\mfp, \mfq)\in \mf{E}_i}
c_{\mfp, \mfq}
r^{\mfq}:= 
v^{\mf{e}_i} \gamma_{\mf{E}_i}(r),
\end{equation}
where 
\begin{equation}\label{240102e4_ab}
\mf{e}_i:= \mfp+ \mfq\mf{m}_i,
\end{equation}
and in \eqref{240102e4_ab}, the point $(\mfp, \mfq)$ can be taken to be an arbitrary point in $\mf{E}_i$. \\

We will handle the regions $\mf{O}_i$ in the current section; the regions $\mf{O}'_{
i}$ will require a different argument, and will be discussed in the next section. More precisely, we will prove in this section that 
\begin{equation}\label{240102e4_20}
\Norm{
\sup_{v\in \R}
\anorm{
\int_{
\mf{O}_i(v)
}
f(x-\theta, y-\gamma(v, \theta)) a(v, \theta)d\theta
}
}_{L^p(\R^2)}
\lesim_{
\gamma, p, a
}
\norm{f}_{L^p(\R^2)},
\end{equation}
for all 
\begin{equation}
p> \max\{2, \mf{D}_{\gamma}\},
\end{equation}
where 
\begin{equation}
\mf{O}_i(v):=\{\theta: (v, \theta)\in \mf{O}_i\}.
\end{equation}
For each $|v|\le \epsilon$, let 
\begin{equation}
a_i(v, \cdot): \R\to \R
\end{equation}
 be a non-negative smooth bump function supported on $(1+c_{\gamma}) \mf{O}_i(v)$, and equal to one on $\mf{O}_i(v)$, where $c_{\gamma}>0$ is a small constant depending only on $\gamma$. If $c_{\gamma}$ is chosen to be sufficiently small, then on the support of $a_i(v, \theta)$, we still have 
\begin{equation}
|\gamma_{
\mf{r}\mf{e}
}(v, \theta)|\simeq 
|v|^{
\mf{p}_i
}
|\theta|^{
\mf{q}_i
}.
\end{equation}
Under these new notations, \eqref{240102e4_20} follows from 
\begin{equation}\label{240102e4_24}
\Norm{
\sup_{|v|\le \epsilon}
\anorm{
\int_{
\R
}
f(x-\theta, y-\gamma(v, \theta)) a_i(v, \theta)d\theta
}
}_{L^p(\R^2)}
\lesim_{
\gamma, p, a
}
\norm{f}_{L^p(\R^2)}.
\end{equation}
We consider several cases separately: The first case is when 
\begin{equation}\label{240107e4_26}
\mf{p}_i\ge 1, \ \ \mf{q}_i\ge 2,
\end{equation}
the second case is when 
\begin{equation}\label{240107e4_27}
\mf{p}_i= 0, \ \ \mf{q}_i\ge 2,
\end{equation}
 the third case is when 
\begin{equation}\label{240107e4_28}
\mf{p}_i\ge 1, \ \ \mf{q}_i=1, \ \ i=1,
\end{equation}
and the last case is when 
\begin{equation}\label{240108e4_29}
\mf{p}_i\ge 1, \ \ \mf{q}_i=1, \ \ i\ge 1.
\end{equation}
The case \eqref{240107e4_28} is the same as saying that the vertex $(\mf{p}_i, 1)$ is the only vertex in the reduced Newton diagram $\rn_d(\gamma)$, while the case \eqref{240108e4_29} says that in the reduced Newton diagram $\rn_d(\gamma)$, there exists a vertex that is different from the vertex $(\mf{p}_1, 1)$.\\

Let us explain the motivation behind the case distinctions \eqref{240107e4_26}-\eqref{240108e4_29}. In the case \eqref{240107e4_26}, the dominating monomial $v^{\mf{p}_i}\theta^{\mf{q}_i}$ has a ``good" cinematic curvature bound. In the other three cases, the dominating monomials $\theta^{\mf{q}_i}$ and $v^{\mf{p}_i}\theta$ both have zero cinematic curvatures. The difference between \eqref{240107e4_27} and \eqref{240107e4_28}-\eqref{240108e4_29} is that in the case \eqref{240107e4_27} the second order derivative (in the $\theta$ variable) of the dominating monomial $\theta^{\mf{q}_i}$ always has a ``favorable" lower bound (rotational curvatures), which allows us to use Van der Corput's lemma in the proof of \eqref{240102e4_24}. The cases \eqref{240107e4_28}-\eqref{240108e4_29} are particularly interesting because of the strongly degenerate examples given by item (2) in Definition \ref{231122defi1_5}. The existence of these strongly degenerate examples must be reflected in the proof, and we will see later that the case distinction between \eqref{240107e4_28} and \eqref{240108e4_29} is how we detect these examples. \\

\subsection{The dominating monomial 
\texorpdfstring{$v^{\mf{p}_i} \theta^{\mf{q}_i}$}{}
satisfies \texorpdfstring{$\mf{p}_i\ge 1, \mf{q}_i\ge 2$}{}. }\label{240106subsection4_1}

Denote 
\begin{equation}\label{240103e4_28}
h_1(v):=
\sum_{\mfp}
c_{\mfp, 0} v^{\mfp},
\end{equation}
where $c_{\mfp, 0}$ is as in \eqref{240102e4_10}. Let $\mfp_0$ be the smallest $\mfp$ such that $c_{\mfp, 0}\neq 0$. If such $\mfp$ does not exist, that is, $c_{\mfp, 0}=0$ for all $\mfp$, then we write $\mfp_0=\infty$. We consider three subcases. The first subcase is when there exists a vertex-tangent line $\mf{L}\in \mf{L}_{(\mf{p}_i, \mf{q}_i)}$ passing through $(\mfp_0, 0)$; the second case is when $(\mfp_0, 0)$ lies on the lower left of every $\mf{L}\in \mf{L}_{(\mf{p}_i, \mf{q}_i)}$, that is, 
if we write the line $\mf{L}\in \mf{L}_{(\mf{p}_i, \mf{q}_i)}$ as 
\begin{equation}
ax+ by=c, \ \ a>0, b>0, c>0,
\end{equation}
then 
\begin{equation}
a \mf{p}_0< c;
\end{equation}
the third case is when $(\mfp_0, 0)$ lies on the upper right of every $\mf{L}\in \mf{L}_{(\mf{p}_i, \mf{q}_i)}$.

\subsubsection{The third subcase}\label{240103subsubsection4_1_1}

We start with the third subcase, which is the simplest case among all the three subcases. What makes this subcase the simplest is that 
\begin{equation}
\gamma(v, \theta)
=
c_{
\mf{p}_i, \mf{q}_i
} v^{\mf{p}_i} \theta^{\mf{q}_i}+ o(
|v|^{\mf{p}_i} |\theta|^{\mf{q}_i}
), \ \ c_{
\mf{p}_i, \mf{q}_i
}\neq 0,
\end{equation}
whenever $(v, \theta)$ lies in the support of $a_i(v, \theta)$ and $|v|\le \epsilon$. Without loss of generality, we assume that $c_{
\mf{p}_i, \mf{q}_i
}=1$. Our goal is to prove that 
\begin{equation}\label{240102e4_30}
\Norm{
\sup_{|v|\le \epsilon}
\anorm{
\int_{
\R
}
f(x-\theta, y-\gamma(v, \theta)) a_i(v, \theta)d\theta
}
}_{L^p(\R^2)}
\lesim_{
\gamma, p, a
}
\norm{f}_{L^p(\R^2)},
\end{equation}
for all $p>2$. Note that in this case we do not see the vertical Newton distance $\mf{D}_{\gamma}$.\\

Let $v(x, y)$ be a measurable function mapping from $\R^2$ to $(-\epsilon, \epsilon)$. To prove \eqref{240102e4_30}, it suffices to prove that 
\begin{equation}\label{240102e4_30zz}
\Norm{
\int_{
\R
}
f(x-\theta, y-\gamma(v(x, y), \theta)) a_i(v(x, y), \theta)d\theta
}_{L^p(\R^2)}
\lesim_{
\gamma, p, a
}
\norm{f}_{L^p(\R^2)},
\end{equation}
where the implicit constant on the right hand side is independent of the measurable function $v(x, y)$. We can without loss of generality assume that 
\begin{equation}
v(x, y)\neq 0, \ \ \forall (x, y).
\end{equation}
Let $\chi: \R\to \R$ be a smooth bump function supported on $(-\epsilon, \epsilon)$. Let $\chi_{+}: \R\to \R$ be a smooth bump function supported on $(-3, -1)\cup (1, 3)$. Denote 
\begin{equation}
    \chi_{j_2}(\theta):=\chi_+(2^{j_2}\theta) \chi(\theta). 
\end{equation}
To prove \eqref{240102e4_30zz}, by the triangle inequality in $j_2$, it suffices to prove that 
\begin{equation}\label{240102e4_34}
\Norm{
\int_{
\R
}
f(x-\theta, y-\gamma(v(x, y), \theta)) a_i(v(x, y), \theta)
\chi_{j_2}(\theta)
d\theta
}_{L^p(\R^2)}
\lesim_{
\gamma, p, a
}
2^{-\delta_p j_2}
\norm{f}_{L^p(\R^2)},
\end{equation}
for every $p>2, j_2\in \N$ and some $\delta_p>0$ depending on $p$. From now on, we will always consider a fixed $j_2$. Moreover, we without loss of generality always assume that 
\begin{equation}\label{240102e4_35}
\mf{O}_i(v(x, y))\cap 
\supp(\chi_{j_2})\neq \emptyset, \ \ \forall (x, y),
\end{equation}
as otherwise the integral on the left hand side of \eqref{240102e4_34} simply vanishes. Denote 
\begin{equation}
\mf{j}_1(x, y):=
-[
\log |v(x, y)|
],
\end{equation}
where $[c]$ means the integral part of $c\in \R$. Recall that on the support of $a_i(v, \theta)$, we always have 
\begin{equation}\label{240103e4_37}
|\gamma(v, \theta)|\simeq 
|v|^{\mf{p}_i}|\theta|^{\mf{q}_i}.
\end{equation}
We apply a Littlewood-Paley decomposition in the second variable, and write \eqref{240102e4_34} as 
\begin{equation}\label{240103e4_38}
\begin{split}
& \Norm{
\sum_{k\in \Z}
\int_{
\R
}
P_k f(x-\theta, y-\gamma(v(x, y), \theta)) a_i(v(x, y), \theta)
\chi_{j_2}(\theta)
d\theta
}_{L^p(\R^2)}\\
& 
\lesim_{
\gamma, p, a
}
2^{-\delta_p j_2}
\norm{f}_{L^p(\R^2)}.
\end{split}
\end{equation}
For the frequencies $k
\le 
\mf{p}_i\mf{j}_1(x, y)
+ \mf{q}_i j_2$, by \eqref{240103e4_37}, the left hand side of \eqref{240103e4_38} can be controlled by the strong maximal operator. More precisely, we have 
\begin{equation}\label{240103e4_39}
\begin{split}
& \Norm{
\sum_{k
\le 
\mf{p}_i\mf{j}_1(x, y)
+ \mf{q}_i j_2
}
\int_{
\R
}
P_k f(x-\theta, y-\gamma(v(x, y), \theta)) a_i(v(x, y), \theta)
\chi_{j_2}(\theta)
d\theta
}_{L^p(\R^2)}\\
& 
\lesim_{
\gamma, p, a
}
2^{-\delta_p j_2}
\norm{f}_{L^p(\R^2)}.
\end{split}
\end{equation}
Therefore, by the triangle inequality, it suffices to prove that 
\begin{equation}\label{240103e4_40}
\begin{split}
& \Norm{
\int_{
\R
}
P_{
\mf{p}_i\mf{j}_1(x, y)
+ \mf{q}_i j_2
+k
} f(x-\theta, y-\gamma(v(x, y), \theta)) a_i(v(x, y), \theta)
\chi_{j_2}(\theta)
d\theta
}_{L^p(\R^2)}\\
& 
\lesim_{
\gamma, p, a
}
2^{-\delta_p j_2}
2^{-\delta_p k}
\norm{f}_{L^p(\R^2)}, \ \ \forall k\in \N. 
\end{split}
\end{equation}
Define 
\begin{equation}
v_{j_1}(x, y):=2^{-j_1} \frac{v(x, y)}{2^{-\mf{j}_1(x, y)}}.
\end{equation}
By Littlewood-Palay inequalities, and by a change of variable 
\begin{equation}
\theta\mapsto 2^{-j_2}\theta,
\end{equation}
and renaming the parameter $v\to 2^{-j_1}v$, 
\eqref{240103e4_40} follows from 
\begin{equation}\label{240103e4_42}
\begin{split}
& \Norm{
\int_{
\R
}
P_{
k
} f(x-\theta, y-\gamma_{\bfj}(v_{0}(x, y), \theta)) a_i(2^{-j_1} v_{0}(x, y), 2^{-j_2} \theta)
\chi_{+}(\theta)
d\theta
}_{L^p(\R^2)}\\
& 
\lesim_{
\gamma, p, a
}
2^{j_2-\delta_p j_2}
2^{-\delta_p k}
\norm{f}_{L^p(\R^2)}, \ \ \forall k\in \N,
\end{split}
\end{equation}
where 
\begin{equation}
\gamma_{\bfj}(v, \theta):=
2^{
\mf{p}_i j_1+\mf{q}_i j_2
}
\gamma(2^{-j_1}v, 2^{-j_2}\theta), \ \ \bfj=(j_1, j_2). 
\end{equation}
Note that
\begin{equation}
|\gamma_{\bfj}(v, \theta)|\simeq |v|^{\mf{p}_i}|\theta|^{\mf{q}_i},
\end{equation}
for all $|v|\simeq 1$ and $|\theta|\simeq 1$, and $\mf{p}_i\ge 1, \mf{q}_i\ge 2$. The desired estimate \eqref{240103e4_42} follows from the local smoothing estimates of Mockenhaupt, Seeger and Sogge \cite[Theorem 1]{MSS92}.

\subsubsection{The second subcase}\label{240104subsubsection4_1_2}

We consider the subcase where $(\mfp_0, 0)$ lies on the lower left of every $\mf{L}\in \mf{L}_{(\mfp_i, \mfq_i)}$. We follow the first few steps in the proof of the third subcase in Subsection \ref{240103subsubsection4_1_1}. More precisely, we will prove 
\begin{equation}\label{240103e4_43}
\Norm{
\int_{
\R
}
f(x-\theta, y-\gamma(v(x, y), \theta)) a_i(v(x, y), \theta)
\chi_{j_2}(\theta)
d\theta
}_{L^p(\R^2)}
\lesim_{
\gamma, p, a
}
2^{-\delta_p j_2}
\norm{f}_{L^p(\R^2)},
\end{equation}
for all $j_2\in \N$, and all 
\begin{equation}
p> \max\{2, \mf{D}_{\gamma}\},
\end{equation}
where we still without loss of generality assume that \eqref{240102e4_35} holds, that is, 
\begin{equation}\label{240103e4_45}
\mf{O}_i(v(x, y))\cap 
\supp(\chi_{j_2})\neq \emptyset, \ \ \forall (x, y).
\end{equation}
We remark here that in Subsection \ref{240103subsubsection4_1_1} the estimate \eqref{240102e4_30} was proven for all $p>2$. In this subsection, we will start to see the exponent $\mf{D}_{\gamma}$. \\

We apply a Littlewood-Paley decomposition in the second variable, and write \eqref{240103e4_43} as 
\begin{equation}\label{240103e4_46}
\begin{split}
& \Norm{
\sum_{k\in \Z}
\int_{
\R
}
P_k f(x-\theta, y-\gamma(v(x, y), \theta)) a_i(v(x, y), \theta)
\chi_{j_2}(\theta)
d\theta
}_{L^p(\R^2)}\\
& 
\lesim_{
\gamma, p, a
}
2^{-\delta_p j_2}
\norm{f}_{L^p(\R^2)}.
\end{split}
\end{equation}
The assumption that $(\mfp_0, 0)$ lies on the lower left of every $\mf{L}\in \mf{L}_{(\mfp_i, \mfq_i)}$ implies that 
\begin{equation}
|\gamma(v, \theta)|\simeq |v|^{\mf{p}_0},
\end{equation}
for every $(v, \theta)$ in the support of $a_i(v, \theta)$. As a consequence, we obtain 
\begin{equation}\label{240103e4_48}
\begin{split}
& \Norm{
\sum_{k\le 
\mf{p}_0 \mf{j}_1(x, y)
}
\int_{
\R
}
P_k f(x-\theta, y-\gamma(v(x, y), \theta)) a_i(v(x, y), \theta)
\chi_{j_2}(\theta)
d\theta
}_{L^p(\R^2)}\\
& 
\lesim_{
\gamma, p, a
}
2^{-\delta_p j_2}
\norm{f}_{L^p(\R^2)}.
\end{split}
\end{equation}
It remains to prove 
\begin{equation}\label{240103e4_49}
\begin{split}
& \Norm{
\sum_{
\mf{p}_i \mf{j}_1(x, y)+ \mf{q}_i j_2\ge 
k\ge 
\mf{p}_0 \mf{j}_1(x, y)
}
\int_{
\R
}
P_k f(x-\theta, y-\gamma(v(x, y), \theta)) a_i(v(x, y), \theta)
\chi_{j_2}(\theta)
d\theta
}_{L^p(\R^2)}\\
& 
\lesim_{
\gamma, p, a
}
2^{-\delta_p j_2}
\norm{f}_{L^p(\R^2)},
\end{split}
\end{equation}
and 
\begin{equation}\label{240103e4_50}
\begin{split}
& \Norm{
\sum_{k\ge 
\mf{p}_i \mf{j}_1(x, y)+ \mf{q}_i j_2
}
\int_{
\R
}
P_k f(x-\theta, y-\gamma(v(x, y), \theta)) a_i(v(x, y), \theta)
\chi_{j_2}(\theta)
d\theta
}_{L^p(\R^2)}\\
& 
\lesim_{
\gamma, p, a
}
2^{-\delta_p j_2}
\norm{f}_{L^p(\R^2)}.
\end{split}
\end{equation}
Here we remark that the assumption that $(\mfp_0, 0)$ lies on the lower left of every $\mf{L}\in \mf{L}_{(\mfp_i, \mfq_i)}$  guarantees that 
\begin{equation}
\mf{p}_i \mf{j}_1(x, y)+ \mf{q}_i j_2\ge 
\mf{p}_0 \mf{j}_1(x, y)
\end{equation}
for all $(x, y)$. \\

We first prove \eqref{240103e4_49}. Let $j_1$ be a possible value of the function $\mf{j}_1(x, y)$. The assumption \eqref{240103e4_45} says that 
\begin{equation}\label{240103e4_52}
\begin{split}
& 
j_1 \mf{p}_i+ j_2 \mf{q}_i\le j_1 \mf{p}_{i-1}+ j_2 \mf{q}_{i-1}, \\
& j_1 \mf{p}_i+ j_2 \mf{q}_i\le j_1 \mf{p}_{i+1}+ j_2 \mf{q}_{i+1}. 
\end{split}
\end{equation}
Let $\mf{J}_1$ be the sup of all the possible values of the function $\mf{j}_1(x, y)$. We bound the left hand side of \eqref{240103e4_49} by 
\begin{equation}
\pnorm{
\sum_{j_1=0}^{
\mf{J}_1
}
\anorm{
\sum_{
\mf{p}_i j_1+ \mf{q}_i j_2\ge 
k\ge 
\mf{p}_0 j_1
}
\int_{
\R
}
P_k f(x-\theta, y-\gamma(v_{j_1}(x, y), \theta)) a_i(v_{j_1}(x, y), \theta)
\chi_{j_2}(\theta)
d\theta
}^p
}^{
\frac{1}{p}
}
\end{equation}
and will prove that 
\begin{equation}\label{240103e4_54}
\begin{split}
& \Norm{
\sum_{
\mf{p}_i j_1+ \mf{q}_i j_2\ge 
k\ge 
\mf{p}_0 j_1
}
\int_{
\R
}
P_k f(x-\theta, y-\gamma(v_{j_1}(x, y), \theta)) a_i(v_{j_1}(x, y), \theta)
\chi_{j_2}(\theta)
d\theta
}_{L^p(\R^2)}\\
& 
\lesim_{
\gamma, p, a
}
2^{-\delta_p j_2}
\norm{f}_{L^p(\R^2)},
\end{split}
\end{equation}
for all allowable $j_1$. The estimate \eqref{240103e4_54}, combined with the following Claim \ref{240103claim4_1}, immediately implies \eqref{240103e4_49}.
\begin{claim}\label{240103claim4_1} For every $j_1=0, 1, \dots, \mf{J}_1$, we always have 
\begin{equation}
\#\{k: 
\mf{p}_i j_1+ \mf{q}_i j_2\ge 
k\ge 
\mf{p}_0 j_1
\}\lesim_{\gamma} j_2.
\end{equation}
\end{claim}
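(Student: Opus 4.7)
The set we wish to bound consists of consecutive integers in the range $[\mf{p}_0 j_1,\ \mf{p}_i j_1 + \mf{q}_i j_2]$, so its cardinality is at most $(\mf{p}_i - \mf{p}_0) j_1 + \mf{q}_i j_2 + 1$. Since $\mf{q}_i$ depends only on $\gamma$, the desired bound $\lesim_\gamma j_2$ will follow once I show $(\mf{p}_i - \mf{p}_0) j_1 \lesim_\gamma j_2$ whenever the range is non-empty.

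The plan is to split into three cases. If $\mf{p}_i \le \mf{p}_0$, then $(\mf{p}_i - \mf{p}_0) j_1 \le 0$ and there is nothing to prove. If $\mf{p}_i > \mf{p}_0$ and $i > 1$, I would invoke the first inequality in \eqref{240103e4_52}, namely $j_1(\mf{p}_i - \mf{p}_{i-1}) \le j_2(\mf{q}_{i-1} - \mf{q}_i)$, which rearranges to $j_1 \le j_2/\mf{m}_{i-1}$. The estimate then follows immediately with implicit constant $(\mf{p}_i - \mf{p}_0)/\mf{m}_{i-1}$.

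The main obstacle is ruling out the remaining configuration $\mf{p}_i > \mf{p}_0$ with $i = 1$; I claim it is incompatible with the standing assumption that $\gamma$ is not strongly degenerate. Writing a vertex-tangent line $\mf{L} \in \mf{L}_{(\mf{p}_1, \mf{q}_1)}$ as $ax + by = c$ with $a, b, c > 0$, the second-subcase hypothesis $a\mf{p}_0 < c$ becomes $\mf{p}_0 - \mf{p}_1 < (b/a)\mf{q}_1$ (using $c = a\mf{p}_1 + b\mf{q}_1$). Since $(\mf{p}_1, \mf{q}_1)$ is the leftmost vertex of $\mc{R}\mc{N}_d(\gamma)$, the set $\mf{L}_{(\mf{p}_1, \mf{q}_1)}$ contains lines with slopes $-a/b$ arbitrarily close to $-\infty$, so $b/a$ can be made arbitrarily small; this forces $\mf{p}_0 \le \mf{p}_1$. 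Combined with $\mf{p}_1 > \mf{p}_0$ and integrality of both quantities, one has $\mf{p}_1 \ge \mf{p}_0 + 1$. But then every $(\mf{p}, \mf{q}) \in \mc{T}(\gamma)$ with $\mf{q} \ge 1$ lies in $\mc{R}\mc{N}(\gamma)$ and so satisfies $\mf{p} \ge \mf{p}_1 \ge \mf{p}_0 + 1$; writing $\gamma(v, \theta) = c_{\mf{p}_0, 0} v^{\mf{p}_0} + \widetilde\gamma(v, \theta)$, one obtains $|\widetilde\gamma(v, \theta)| = O(|v|^{\mf{p}_0 + 1})$ near the origin. This places $\gamma$ in item (3) of Definition \ref{231122defi1_5}, contradicting non-strong-degeneracy. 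Hence this last configuration never occurs, and the claim is established.
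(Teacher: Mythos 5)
Your proof is correct and follows essentially the same route as the paper: trivial when $\mf{p}_0 \ge \mf{p}_i$, invoke the first inequality of \eqref{240103e4_52} when $i \ge 2$, and rule out $i = 1$ with $\mf{p}_1 > \mf{p}_0$ by observing that this configuration places $\gamma$ in item (3) of Definition \ref{231122defi1_5}, contradicting non--strong-degeneracy. One minor remark: the vertex-tangent line computation in your third case is logically redundant, since it derives only $\mf{p}_0 \le \mf{p}_1$, which is strictly weaker than the case hypothesis $\mf{p}_1 > \mf{p}_0$ you already have; the contradiction follows directly once you note $\mf{p}_1 \ge \mf{p}_0 + 1$.
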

\begin{proof}[Proof of Claim \ref{240103claim4_1}]
If $\mf{p}_0\ge \mf{p}_i$, then the claim is immediate. Let us assume $\mf{p}_0< \mf{p}_i$. In this case, we will need to make use of \eqref{240103e4_52}. Recall item (3) in Definition \ref{231122defi1_5}, and recall that we are assuming that $\gamma$ is not strongly degenerate. As a consequence of the assumption  $\mf{p}_0< \mf{p}_i$, we can conclude that  $i\ge 2$, that is, $(\mf{p}_{i-1}, \mf{q}_{i-1})$ is also a vertex of the Newton diagram of $\gamma$. \footnote{This is not necessarily true for $(\mf{p}_{i+1}, \mf{q}_{i+1})$. } By the first equation of \eqref{240103e4_52}, we obtain 
\begin{equation}
j_1 (\mf{p}_i-\mf{p}_{i-1})\le j_2 (\mf{q}_{i-1}-\mf{q}_i).
\end{equation}
Therefore, 
\begin{equation}
\#\{k: 
\mf{p}_i j_1+ \mf{q}_i j_2\ge 
k\ge 
\mf{p}_0 j_1
\}\le 
(\mf{p}_i-\mf{p}_0)j_1+ \mf{q}_i j_2 \lesim_{\gamma} j_2.
\end{equation}
This finishes the proof of the claim. 
\end{proof}
As noted above, to prove \eqref{240103e4_49}, it remains to prove \eqref{240103e4_54}. By applying Claim \ref{240103claim4_1} again and by the triangle inequality, it suffices to prove that 
\begin{equation}\label{240103e4_58}
\begin{split}
& \Norm{
\int_{
\R
}
P_k f(x-\theta, y-\gamma(v_{j_1}(x, y), \theta)) a_i(v_{j_1}(x, y), \theta)
\chi_{j_2}(\theta)
d\theta
}_{L^p(\R^2)}\\
& 
\lesim_{
\gamma, p, a
}
2^{-\delta_p j_2}
\norm{f}_{L^p(\R^2)},
\end{split}
\end{equation}
for all 
\begin{equation}\label{240103e4_59}
j_1\le \mf{J}_1, \ \ \mf{p}_i j_1+ \mf{q}_i j_2\ge 
k\ge 
\mf{p}_0 j_1.
\end{equation}
Note that the function $P_k f$ is essentially constant in the vertical direction at the scale 
\begin{equation}
2^{-k}\ge 2^{
-(
\mf{p}_i j_1+ \mf{q}_i j_2
)
},
\end{equation}
and therefore \eqref{240103e4_58} is equivalent to proving that 
\begin{equation}\label{240103e4_61}
\begin{split}
& \Norm{
\int_{
\R
}
P_k f(x-\theta, y-h_1(v_{j_1}(x, y))) a_i(v_{j_1}(x, y), \theta)
\chi_{j_2}(\theta)
d\theta
}_{L^p(\R^2)}\\
& 
\lesim_{
\gamma, p, a
}
2^{-\delta_p j_2}
\norm{f}_{L^p(\R^2)},
\end{split}
\end{equation}
where $h_1$ was introduced in \eqref{240103e4_28}. We write the above estimate in maximal operator form, and need to prove 
\begin{equation}\label{240103e4_62}
\begin{split}
& \Norm{
\sup_{
|v|\simeq 2^{-j_1}
}
\anorm{
\int_{
\R
}
P_k f(x-\theta, y-h_1(v))
\chi_{j_2}(\theta)
d\theta
}
}_{L^p(\R^2)}
\lesim
2^{-\delta_p j_2}
\norm{f}_{L^p(\R^2)}.
\end{split}
\end{equation}
Here to simplify the presentation, we are not carrying the term $a_i(v_{j_1}(x, y), \theta)$ in \eqref{240103e4_61}; it can be removed by doing a Fourier series expansion as both $v$ and $\theta$ are localized in dyadic intervals.  By the change of variable 
\begin{equation}
\theta\mapsto 2^{-j_2}\theta,
\end{equation}
and by renaming the parameter $v\mapsto 2^{-j_1}v$, the desired bound \eqref{240103e4_62} follows from 
\begin{equation}\label{240103e4_64}
\begin{split}
& \Norm{
\sup_{
|v|\simeq 1
}
\anorm{
\int_{
\R
}
P_{
k-
\mf{p}_0 j_1
} f(x-\theta, y-\widetilde{h}_1(v))
\chi_{+}(\theta)
d\theta
}
}_{L^p(\R^2)}
\lesim
2^{j_2-\delta_p j_2}
\norm{f}_{L^p(\R^2)},
\end{split}
\end{equation}
where 
\begin{equation}
\widetilde{h}_1(v):=2^{
\mf{p}_0
 j_1} h_1\left(2^{-j_1} v\right).
\end{equation}
By Sobolev embedding, the left hand side of \eqref{240103e4_64} is bounded by 
\begin{equation}
\begin{split}
& 2^{
\frac{
k-\mf{p}_0 j_1
}{p}
}
\Norm{
\int_{
\R
}
P_{
k-
\mf{p}_0 j_1
} f(x-\theta, y-\widetilde{h}_1(v))
\chi_{+}(\theta)
d\theta
}_{
L^p_{x, y, v}
}\\
& \lesim 
2^{
\frac{
k-\mf{p}_0 j_1
}{p}
}
\norm{f}_{L^p(\R^2)}
\lesim 
2^{
\frac{
\mf{p}_i j_1+ \mf{q}_i j_2- \mf{p}_0 j_1
}{p}
} 
\norm{f}_{L^p(\R^2)},
\end{split}
\end{equation}
where in the last step we used \eqref{240103e4_59}. The constraints \eqref{240103e4_52} tells us that 
\begin{equation}\label{240104e4_69}
\frac{
\mf{q}_{i}-\mf{q}_{i+1}
}{
\mf{p}_{i+1}-\mf{p}_{i}
}
\le 
\frac{j_1}{j_2} \le \frac{
\mf{q}_{i-1}-\mf{q}_i
}{
\mf{p}_{i}-\mf{p}_{i-1}
}.
\end{equation}
By letting 
\begin{equation}
2^{
\frac{
\mf{p}_i j_1+ \mf{q}_i j_2- \mf{p}_0 j_1
}{p}
} 
\le 2^{j_2},
\end{equation}
we obtain 
\begin{equation}
p\ge 
\mf{q}_i+(\mf{p}_i-\mf{p}_0)j_1/j_2.
\end{equation}
This, combined with \eqref{240104e4_69}, leads to the constraint 
\begin{equation}\label{240104e4_72}
p\ge 
\sup_{
\mf{L}\in \mf{L}_{
(\mf{p}_i, \mf{q}_i)
}
}
\mf{d}(
\mf{L}, (\mf{p}_0, 0)
). 
\end{equation}
Note that \eqref{240103e4_64} holds at $p=\infty$ with $\delta_p=1$. Interpolation show that \eqref{240103e4_64} holds for all
\begin{equation}
p> \max\{
2, \mf{D}_{\gamma}
\}.
\end{equation}
This finishes estimating the term \eqref{240103e4_49}.\\

Next we prove \eqref{240103e4_50}. By the triangle inequality, it suffices to prove 
\begin{equation}\label{240103e4_69}
\begin{split}
& \Norm{
\int_{
\R
}
P_{
\mf{p}_i \mf{j}_1(x, y)+ \mf{q}_i j_2+k
} f(x-\theta, y-\gamma(v(x, y), \theta)) a_i(v(x, y), \theta)
\chi_{j_2}(\theta)
d\theta
}_{L^p(\R^2)}\\
& 
\lesim_{
\gamma, p, a
}
2^{-\delta_p j_2
-\delta_p k
}
\norm{f}_{L^p(\R^2)}, \ \ \forall k\in \N. 
\end{split}
\end{equation}
By the Littlewood-Paley inequality, we just need to prove 
\begin{equation}\label{240103e4_70}
\begin{split}
& \Norm{
\int_{
\R
}
P_{
\mf{p}_i j_1+ \mf{q}_i j_2+k
} f(x-\theta, y-\gamma(v_{j_1}(x, y), \theta)) a_i(v_{j_1}(x, y), \theta)
\chi_{j_2}(\theta)
d\theta
}_{L^p(\R^2)}\\
& 
\lesim_{
\gamma, p, a
}
2^{-\delta_p j_2
-\delta_p k
}
\norm{f}_{L^p(\R^2)},
\end{split}
\end{equation}
for all $(j_1, j_2)$ satisfying \eqref{240103e4_52}. We write \eqref{240103e4_70} in maximal operator form, and need to prove 
\begin{equation}\label{240103e4_71}
\begin{split}
& \Norm{
\sup_{
|v|\simeq 2^{-j_1}
}
\anorm{
\int_{
\R
}
P_{
\mf{p}_i j_1+ \mf{q}_i j_2+k
} f(x-\theta, y-\gamma(v, \theta))
\chi_{j_2}(\theta)
d\theta
}
}_{L^p(\R^2)}\\
& 
\lesim
2^{-\delta_p j_2
-\delta_p k
}
\norm{f}_{L^p(\R^2)}.
\end{split}
\end{equation}
Here to simplify the presentation, we again removed the amplitude function $a_i$, and this can be done via Fourier series expansion.  By the change of variable 
\begin{equation}
\theta\mapsto 2^{-j_2}\theta,
\end{equation}
and by renaming the parameter $v\to 2^{-j_1}v$, it is equivalent to prove 
\begin{equation}\label{240104e4_73}
\begin{split}
& \Norm{
\sup_{
|v|\simeq 1
}
\anorm{
\int_{
\R
}
P_{
k
} f(x-\theta, y-\gamma_{\bfj}(v, \theta))
\chi_{+}(\theta)
d\theta
}
}_{L^p(\R^2)}
\lesim
2^{j_2-\delta_p j_2
-\delta_p k
}
\norm{f}_{L^p(\R^2)},
\end{split}
\end{equation}
where 
\begin{equation}
\gamma_{\mathbf{j}}(v, \theta):=2^{
\mf{p}_i j_1+\mf{q}_i j_2
} \gamma\left(2^{-j_1} v, 2^{-j_2} \theta\right).
\end{equation}
Note that in the current case, the function $\gamma_{\bfj}$ contains terms with very large coefficients. More precisely, 
\begin{equation}
\gamma_{\bfj}(v, \theta)
=
v^{\mf{p}_i}\theta^{\mf{q}_i}+
o(|v|^{\mf{p}_i}|\theta|^{\mf{q}_i})+
2^{
\mf{p}_i j_1+\mf{q}_i j_2
}
h_1(2^{-j_1}v),
\end{equation}
and the leading term of $2^{
\mf{p}_i j_1+\mf{q}_i j_2
}
h_1(2^{-j_1}v)$ is 
\begin{equation}
2^{
\mf{p}_i j_1+\mf{q}_i j_2
}
2^{-\mf{p}_0 j_1} v^{\mf{p}_0}. 
\end{equation}
By Sobolev embedding, to prove \eqref{240104e4_73}, it suffices to prove 
\begin{equation}\label{240104e4_82}
\begin{split}
& \Norm{
\int_{
\R
}
P_{
k
} f(x-\theta, y-\gamma_{\bfj}(v, \theta))
\chi_{+}(\theta)
d\theta
}_{L^p_{x, y, v}}\\
&
\lesim
2^{-\frac{\mf{p}_i j_1+\mf{q}_i j_2-\mf{p}_0 j_1+k}{p}}
2^{j_2-\delta_p j_2
-\delta_p k
}
\norm{f}_{L^p(\R^2)},
\end{split}
\end{equation}
where on the left hand side, when integrating in $v$, we have $|v|\simeq 1$. By local smoothing estimates of Mockenhaupt, Seeger and Sogge \cite[Theorem 1]{MSS92},  we have 
\begin{equation}
\Norm{
\int_{
\R
}
P_{
k
} f(x-\theta, y-\gamma_{\bfj}(v, \theta))
\chi_{+}(\theta)
d\theta
}_{L^p_{x, y, v}}
\lesim 
2^{
-\frac{k}{p}-\delta_p k
}\norm{f}_p,
\end{equation}
for some $\delta_p>0$. Exactly the same computations as in \eqref{240104e4_69}-\eqref{240104e4_72} show that \eqref{240104e4_82} holds for all 
\begin{equation}
p> \max\{
2, \mf{D}_{\gamma}
\}.
\end{equation}
This finishes estimating the term \eqref{240103e4_50}, and therefore finishes the whole discussion of this subsection.

\subsubsection{The first subcase}\label{240107subsubsection4_1_3}

In this subsection we consider the subcase where there exists $\mf{L}\in \mf{L}_{(\mf{p}_i, \mf{q}_i)}$ that passes through the point $(\mf{p}_0, 0)$. This subcase is the most complicated among all the three subcases. However, we will see that the proof of the current subcase is just a combination of the proofs of the previous two subcases. \\

Let us first write down what we need to prove. We copy \eqref{240103e4_45} and \eqref{240103e4_46}, and will prove 
\begin{equation}\label{240104e4_85}
\begin{split}
& \Norm{
\sum_{k\in \Z}
\int_{
\R
}
P_k f(x-\theta, y-\gamma(v(x, y), \theta)) a_i(v(x, y), \theta)
\chi_{j_2}(\theta)
d\theta
}_{L^p(\R^2)}\\
& 
\lesim_{
\gamma, p, a
}
2^{-\delta_p j_2}
\norm{f}_{L^p(\R^2)},
\end{split}
\end{equation}
for all $j_2\in \N$, 
\begin{equation}
p> \max\{
2, \mf{D}_{\gamma}
\},
\end{equation}
and some $\delta_p>0$, where $v(x, y)$ takes values in $(-\epsilon, \epsilon)$ and satisfies 
\begin{equation}
\mf{O}_i(v(x, y))\cap 
\supp(\chi_{j_2})\neq \emptyset, \ \ \forall (x, y).
\end{equation}
By splitting the left hand side of \eqref{240104e4_85} into two pieces, it suffices to prove \eqref{240104e4_85} under the assumption 
\begin{equation}\label{240104e4_88}
2^{
-\mf{j}_1(x, y) \mf{p}_i- j_2\mf{q}_i
}\le 2^{
-\mf{p}_0 j_1
}, \ \ \forall (x, y),
\end{equation}
and under the assumption 
\begin{equation}\label{240104e4_89}
2^{
-\mf{j}_1(x, y) \mf{p}_i- j_2\mf{q}_i
}\ge 2^{
-\mf{p}_0 j_1
}, \ \ \forall (x, y),
\end{equation}
respectively. Under the assumption \eqref{240104e4_88}, we have 
\begin{equation}
|\gamma(v(x, y), \theta)|\simeq |v(x, y)|^{\mf{p}_0},
\end{equation}
and under the assumption \eqref{240104e4_89}, we have 
\begin{equation}
|\gamma(v(x, y), \theta)|\simeq |v(x, y)|^{\mf{p}_i}|\theta|^{\mf{q}_i}.
\end{equation}
The proof of the case \eqref{240104e4_89} is precisely the same as the proof in Subsection \ref{240103subsubsection4_1_1}, and the proof of the case \eqref{240104e4_88} is essentially the same as the proof in Subsection \ref{240104subsubsection4_1_2}. The details are left out.

\subsection{The dominating monomial is \texorpdfstring{$\theta^{\mf{q}_i}$}{} with \texorpdfstring{$\mf{q}_i\ge 2$}{}. }\label{240107subsection4_2}

Recall from \eqref{240102e4_24} that we need to prove 
\begin{equation}\label{240104e4_91}
\Norm{
\sup_{|v|\le \epsilon}
\anorm{
\int_{
\R
}
f(x-\theta, y-\gamma(v, \theta)) a_i(v, \theta)d\theta
}
}_{L^p(\R^2)}
\lesim_{
\gamma, p, a
}
\norm{f}_{L^p(\R^2)},
\end{equation}
for all 
\begin{equation}
p> \max\{
2, \mf{D}_{\gamma}
\}.
\end{equation}
The main difficulties in this case is that the cinematic curvature of the monomial $\theta^{\mf{q}_i}$ vanishes constantly, as $\theta^{\mf{q}_i}$ does not depend on the $v$ variable.

Recall the definition of $h_1(v)$ in \eqref{240103e4_28}, and the notation $\mf{p}_0$. We will consider two subcases separately. The first subcase is when there exists a vertex-tangent line $\mf{L}\in \mf{L}_{(
\mf{p}_i, \mf{q}_i
)}$ passing through $(\mf{p}_0, 0)$, and the second subcase is when $(\mf{p}_0, 0)$ lies on the upper right of  every vertex-tangent line $\mf{L}\in \mf{L}_{(
\mf{p}_i, \mf{q}_i
)}$. The proofs for these two subcases are similar, and the proof for the second subcase is slightly less technical as it involves fewer case distinctions. In particular, in the second subcase we will prove \eqref{240104e4_91} for all $p>2$, and in the first subcase we will see the exponent $\mf{D}_{\gamma}$.  We will discuss the proof of the second subcase first, and then sketch the proof of the first subcase.

\subsubsection{The second subcase}\label{240106subsection4_2_1}

In this subcase, we always have 
\begin{equation}
|\gamma(v, \theta)|\simeq 
|\theta|^{\mf{q}_i}, \ \ \forall (v, \theta)\in \supp(a_i).
\end{equation}
To prove \eqref{240104e4_91}, we without loss of generality will only prove 
\begin{equation}\label{240104e4_95}
\Norm{
\sup_{0\le v\le \epsilon}
\anorm{
\int_{
\R
}
f(x-\theta, y-\gamma(v, \theta)) a_i(v, \theta)d\theta
}
}_{L^p(\R^2)}
\lesim_{
\gamma, p, a
}
\norm{f}_{L^p(\R^2)}.
\end{equation} 
Moreover, as mentioned above, we will prove \eqref{240104e4_95} for all $p>2$.  Denote 
\begin{equation}
    P(v, \theta):= \cine(\gamma)(v, \theta),
\end{equation}
the cinematic curvature of $\gamma$. 
We apply Theorem \ref{231120theorem3_3} to the analytic function $P(v, \theta)$, and write $\B_{\epsilon}^+$
  as a disjoint union 
\begin{equation}
U_0\bigcup \pnorm{
\bigcup_{n=1}^N
U_n
}\bigcup U_{N+1}.
\end{equation}
We recall some properties of this decomposition. We have  a sequence of functions
\begin{equation}
\Theta_0(v)\le \Theta_1(v)\le \dots\le \Theta_N(v)
\end{equation}
that are positive when $v>0$, and 
\begin{equation}
\begin{split}
& U_0(v):=U_0\cap \{(v, \theta): \theta\in \R\}=[0, \Theta_0(v)), \\
& U_n(v):=U_n\cap \{(v, \theta): \theta\in \R\}=
[\Theta_{n-1}(v), \Theta_{n}(v)), \ n=1, 2, \dots, N, \\
& U_{N+1}(v):=U_{N+1}\cap \{(v, \theta): \theta< \epsilon\}=[\Theta_{N}(v), \epsilon). 
\end{split}
\end{equation}
For every $1\le n\le N-1$, there exist a positive integer $\ell_n$ and a sequence of functions 
\begin{equation}
\Theta_{n, 1}, \dots, \Theta_{n, \ell_n}, \Theta_{n, \ell_n}^+, \Theta_{n, \ell_n}^-,
\end{equation}
with 
\begin{equation}
\order(\Theta_{n, 1})< \dots< \order(\Theta_{n, \ell_n})\le 
\min\{\order(\Theta_{n, \ell_n}^-),
\order(\Theta_{n, \ell_n}^+)
\},
\end{equation}
such that 
\begin{equation}
\begin{split}
& 
\Theta_{n-1}=\Theta_{n, 1}+ \dots+ \Theta_{n, \ell_n}+ \Theta_{n, \ell_n}^-, \\
& \Theta_{n}=\Theta_{n, 1}+ \dots+ \Theta_{n, \ell_n}+ \Theta_{n, \ell_n}^+.
\end{split}
\end{equation}
There exists a positive integer $M$ such that all the functions that appear above are analytic functions in $v^{\frac{1}{M}}$ for $0\le v< \epsilon$. On $U_0$ and $U_{N+1}$, it holds that 
\begin{equation}
|P(v, \theta)|\simeq |v|^{a_0} |\theta|^{b_0}, \ \ |P(v, \theta)|\simeq |v|^{a_{N+1}} |\theta|^{b_{N+1}},
\end{equation}
respectively, for some $a_0, b_0, a_{N+1}, b_{N+1}\in \N$, and on $U_n, n=1, \dots, N$, it holds that 
\begin{equation}
|P(v, \theta)|\simeq |v|^{a_n} \anorm{
\theta- \Theta^{\circ}_n(v)
}^{b_n},
\end{equation}
for some $a_n, b_n\in \N$, where
\begin{equation}
\Theta_n^{\circ}=\Theta_{n, 1}+ \dots+ \Theta_{n, \ell_n}.
\end{equation}
This finishes recalling Theorem \ref{231120theorem3_3}. \\

To prove \eqref{240104e4_95}, it suffices to prove 
\begin{equation}\label{231130e4_82}
    \Norm{
    \sup_{0\le v\le \epsilon} 
    \anorm{
    \int_{U_n(v)} f(x-\theta, y-\gamma(v, \theta))
    a_i(v, \theta)
    d\theta
    }
    }_{
    L^p(\R^2)
    }
    \lesim_{\gamma, p} \norm{f}_{L^p(\R^2)},
\end{equation}
for every $n=0, 1, \dots, N+1$. We consider the case $n=0$, the case $n=N+1$ and the case $1\le n\le N$ separately. \\

Let us start with the case $n=0$. Assume that 
\begin{equation}\label{231130e4_83}
\Theta_0(v)= c_0 v^{\alpha_0}+ o(v^{\alpha_0})
\end{equation}
for some $c_0>0$ and $\alpha_0$ is an integer multiple of $1/M$. Recall that for $0\le \theta\le \Theta_0(v)$, it holds that 
\begin{equation}\label{231130e4_84}
|P(v, \theta)|\simeq |v|^{a_0} |\theta|^{b_0}.
\end{equation}
If $c_{\gamma}>0$ is chosen to be sufficiently small, depending on $\gamma$, then we can guarantee that \eqref{231130e4_84} still holds for all 
\begin{equation}\label{231130e4_85}
0\le \theta\le (1+c_{\gamma}) \Theta_0(v).
\end{equation}
Next, for each $v$ we create a smooth partition of unity for the interval 
\begin{equation}\label{240104e4_110}
\supp (a_i(v, \cdot))\cap (
0, (1+c_{\gamma})\Theta_0(v)
).
\end{equation}
Let $\chi_+: \R\to \R$ be a smooth bump function supported on $(1, 3)$. Denote 
\begin{equation}\label{231130e4_86}
\chi_{j_1}(v):=\chi_+(2^{j_1}v), \ \ \chi_{j_2}(\theta):=\chi_+(2^{j_2}\theta),
\end{equation}
and 
\begin{equation}\label{231130e4_87}
\chi_{\bfj}(v, \theta):=\chi_+(2^{j_1}v)\chi_+(2^{j_2}\theta), \ \ \bfj=(j_1, j_2). 
\end{equation}
For each $j_1$, we define $J^-_2(j_1), J^+_2(j_1)\in \N\cup{\infty}$ to be two integers such that 
\begin{equation}
\sum_{
J^-_2(j_1)\le 
j_2\le J^+_2(j_1)
}
\chi_{\bfj}(v, \theta)
\end{equation}
is supported on \eqref{240104e4_110} and equals one on 
\begin{equation}\label{240104e4_114}
\supp (a_i(v, \cdot))\cap (
0, \Theta_0(v)
),
\end{equation} 
for all $v$ in the support of $\chi_{j_1}(v)$. With the new notations, what we need to prove can be written as 
\begin{equation}\label{231130e4_89}
\Norm{
\sup_{j_1} \sup_v 
\anorm{
\sum_{
J^-_2(j_1)\le 
j_2\le J^+_2(j_1)
}
\int_{\R} 
f(x-\theta, y-\gamma(v, \theta))
\chi_{\bfj}(v, \theta)d\theta
}
}_{
L^p(\R^2)
}
\lesim \norm{f}_{L^p(\R^2)}.
\end{equation}
Let $v(x, y)$ be a measurable function from $\R^2$ to $(-\epsilon, \epsilon)$. Without loss of generality, we assume that $v(x, y)\neq 0$ for every $(x, y)$. Denote 
\begin{equation}\label{231130e4_90}
\mathfrak{j}_1(x, y):=-[\log 
|v(x, y)|
],
\end{equation}
and 
\begin{equation}\label{231130e4_91}
v_j(x, y):= 2^{-j}
\frac{
v(x, y)
}{
2^{-
\mathfrak{j}_1(x, y)
}
}. 
\end{equation}
To prove \eqref{231130e4_89}, it suffices to prove that 
\begin{equation}\label{231130e4_92}
\Norm{
\sum_{
J^+_2(
\mathfrak{j}_1(x, y)
)\ge 
j_2\ge J^-_2(
\mathfrak{j}_1(x, y)
)
}
\int_{\R} 
f(x-\theta, y-\gamma(v(x, y), \theta))
\chi_{j_2}(\theta)d\theta
}_{
L^p(\R^2)
}
\lesim \norm{f}_{L^p(\R^2)}.
\end{equation}
By applying the triangle inequality to the sum over $j_2$, it suffices to prove that there exists $\delta_p>0$ such that 
\begin{equation}\label{240104e4_119}
\Norm{
\int_{\R} 
f(x-\theta, y-\gamma(v(x, y), \theta))
\chi_{j_2}(\theta)d\theta
}_{
L^p(\R^2)
}
\lesim
2^{-\delta_p j_2}
 \norm{f}_{L^p(\R^2)},
\end{equation}
where we can without loss of generality assume that 
\begin{equation}
J^+_2(
\mathfrak{j}_1(x, y)
)\ge 
j_2\ge J^-_2(
\mathfrak{j}_1(x, y)
), \ \ \forall (x, y).
\end{equation}
This, combined with \eqref{231130e4_83}, tells us that 
\begin{equation}
2^{
-\alpha_0 \mathfrak{j}_1(x, y)
}\gtrsim 2^{-j_2},
\end{equation}
which further implies the upper bound 
\begin{equation}
\mathfrak{j}_1(x, y)\le j_2/\alpha_0.
\end{equation}
By the triangle inequality, to prove \eqref{240104e4_119}, it suffices to prove that 
\begin{equation}\label{231130e4_97}
\Norm{
\int_{\R} 
f(x-\theta, y-\gamma(v_{j_1}(x, y), \theta))
\chi_{j_2}(\theta)d\theta
}_{
L^p(\R^2)
}
\lesim 
2^{-\delta_p j_2}
\norm{f}_{L^p(\R^2)},
\end{equation}
for every $j_1\le j_2/\alpha_0$. By a change of variable in $\theta$, \eqref{231130e4_97} is equivalent to 
\begin{equation}\label{240104e4_124}
\Norm{
\int_{\R} 
f(x-\theta, y-\widetilde{\gamma}(v_{j_1}(x, y), \theta))
\chi_{+}(\theta)d\theta
}_{
L^p(\R^2)
}
\lesim 
2^{j_2-\delta_p j_2}
\norm{f}_{L^p(\R^2)},
\end{equation}
where 
\begin{equation}
\widetilde{\gamma}(v, \theta):=
2^{\mf{q}_i j_2} \gamma(v, 2^{-j_2}\theta).
\end{equation}
We apply a Littlewood-Paley decomposition in the second variable, and need to prove 
\begin{equation}\label{240104e4_126}
\Norm{
\int_{\R} 
\sum_{k\in \Z} P_k
f(x-\theta, y-\widetilde{\gamma}(v_{j_1}(x, y), \theta))
\chi_{+}(\theta)d\theta
}_{
L^p(\R^2)
}
\lesim 
2^{j_2-\delta_p j_2}
\norm{f}_{L^p(\R^2)}.
\end{equation}
Note that the low frequency part $k\le 0$ can be bounded by the strong maximal operator. 
To prove \eqref{240104e4_126}, it therefore remains to prove that 
\begin{equation}\label{240104e4_128}
\Norm{
\int_{\R} 
\sum_{k\ge 0} P_k
f(x-\theta, y-\widetilde{\gamma}(v_{j_1}(x, y), \theta))
\chi_{+}(\theta)d\theta
}_{
L^p(\R^2)
}
\lesim 
2^{j_2-\delta_p j_2}
\norm{f}_{L^p(\R^2)}.
\end{equation}
By Sobolev embedding and Van der Corput's lemma, we obtain 
\begin{equation}\label{231130e4_102zz}
\Norm{
\int_{\R} 
P_k f(x-\theta, y-\widetilde{\gamma}(v_{j_1}(x, y), \theta))
\chi_{+}(\theta)d\theta
}_{
L^p(\R^2)
}
\lesim 
\norm{f}_{L^p(\R^2)},
\end{equation}
for every $p\ge 2$ and every $k\in \N$.  We compute the cinematic curvature of $\widetilde{\gamma}$: 
\begin{equation}\label{240107e4_133jj}
\begin{split}
& \det\begin{bmatrix}
2^{\mf{q}_i j_2-2j_2}\gamma_{\theta\theta}(v, 2^{-j_2}\theta), & 2^{\mf{q}_i j_2-3j_2}\gamma_{\theta\theta\theta}(v, 2^{-j_2}\theta)\\
2^{\mf{q}_i j_2-j_2}
\gamma_{v\theta}(v, 2^{-j_2}\theta), & 2^{\mf{q}_i j_2-2j_2}
\gamma_{v\theta\theta}(v, 2^{-j_2}\theta)
\end{bmatrix}\\
& =
2^{2\mf{q}_i j_2-4j_2} 
\det\begin{bmatrix}
\gamma_{\theta\theta}(v, 2^{-j_2}\theta), & \gamma_{\theta\theta\theta}(v, 2^{-j_2}\theta)\\
\gamma_{v\theta}(v, 2^{-j_2}\theta), & 
\gamma_{v\theta\theta}(v, 2^{-j_2}\theta)
\end{bmatrix}
\simeq 
2^{2\mf{q}_i j_2-4j_2} 2^{-a_0 j_1-b_0 j_2}.
\end{split}
\end{equation}
\begin{claim}\label{231130claim4_4}
There exists a large constant $C_{\gamma}>0$ such that 
\begin{equation}\label{231130e4_104}
\Norm{
\int_{\R} 
P_k f(x-\theta, y-\widetilde{\gamma}(v_{j_1}(x, y), \theta))
\chi_{+}(\theta)d\theta
}_{
L^p(\R^2)
}
\lesim 
2^{
-\frac{k}{2p}+ C_{\gamma}(j_1+j_2)
}
\norm{f}_{L^p(\R^2)},
\end{equation}
at $p=6$, for every $k\in \N$. The exponent $1/2$ can be replaced by any number smaller than $1$; in our application, any number strictly bigger than zero works. 
\end{claim}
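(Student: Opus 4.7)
The plan is to combine a one-dimensional Sobolev embedding in the $v$-variable with the local smoothing estimate of Mockenhaupt--Seeger--Sogge \cite{MSS92}. Write $A_v g(x,y):=\int g(x-\theta, y-\widetilde{\gamma}(v,\theta))\chi_{+}(\theta)\,d\theta$, so that the left-hand side of \eqref{231130e4_104} equals $\|A_{v_{j_1}(x,y)}P_k f\|_{L^p(\R^2)}$. Since $v_{j_1}(x,y)$ takes values in an interval $J$ of length $\simeq 2^{-j_1}$, the standard one-dimensional Sobolev embedding $W^{1,p}(J)\hookrightarrow L^\infty(J)$ applied pointwise in $(x,y)$ and then in $L^p$ via H\"older yields
\begin{equation*}
\|A_{v_{j_1}(x,y)}P_k f\|_{L^p(\R^2)}
\lesim
2^{j_1/p}\|A_v P_k f\|_{L^p_{x,y,v}}
+\|A_v P_k f\|_{L^p_{x,y,v}}^{1-1/p}\|\partial_v A_v P_k f\|_{L^p_{x,y,v}}^{1/p}.
\end{equation*}
The $2^{j_1/p}$ prefactor on the first term will be absorbed into the permitted $2^{C_\gamma(j_1+j_2)}$ loss.

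Next I bound each factor on the right by MSS local smoothing. Observe that $\partial_v A_v P_k f=-A^{\#}_v(\partial_y P_k f)$, where $A^{\#}_v$ is the averaging operator along the same curves but with amplitude $\partial_v\widetilde{\gamma}(v,\theta)\chi_+(\theta)$; on the support $|\partial_v\widetilde{\gamma}|\lesim 2^{\mf{q}_i j_2}$, and the symbol of $\partial_y$ on $P_k f$ contributes a factor $2^k$. So both $\|A_v P_k f\|_{L^p_{x,y,v}}$ and $\|\partial_v A_v P_k f\|_{L^p_{x,y,v}}$ are (up to a $2^{\mf{q}_i j_2+k}$ factor in the latter) $L^p_{x,y,v}$-norms of MSS-type averaging operators. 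The obstacle to applying \cite[Theorem 1]{MSS92} directly is that, by \eqref{240107e4_133jj}, the cinematic curvature of $\widetilde{\gamma}$ on the support is only $\simeq 2^{2\mf{q}_i j_2-4j_2-a_0 j_1-b_0 j_2}$ rather than of unit size. We handle this by rescaling $v\mapsto\mu v$ and $\theta\mapsto\lambda\theta$ with $\mu\lambda^4$ matching the actual cinematic curvature (since cinematic curvature transforms by a factor $\mu\lambda^4$ under such a rescaling), so that the rescaled phase has unit-size cinematic curvature; we then apply MSS in the rescaled variables and undo the rescaling. The net cost is a constant of at most $2^{C_\gamma(j_1+j_2)}$, giving at $p=6$
\begin{equation*}
\|A_v P_k f\|_{L^p_{x,y,v}} \lesim 2^{-k(1/p+\delta_p)}\cdot 2^{C_\gamma(j_1+j_2)}\|f\|_{L^p(\R^2)}
\end{equation*}
for some fixed $\delta_p>0$ furnished by MSS, and analogously $\|\partial_v A_v P_k f\|_{L^p_{x,y,v}}\lesim 2^{k(1-1/p-\delta_p)}\cdot 2^{C_\gamma(j_1+j_2)}\|f\|_{L^p(\R^2)}$.

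Substituting these two bounds into the Sobolev inequality and simplifying the exponent of $2^k$ (the contributions $(1-1/p)(-1/p-\delta_p)$ and $(1/p)(1-1/p-\delta_p)$ cancel to leave $-\delta_p$) gives
\begin{equation*}
\|A_{v_{j_1}(x,y)}P_k f\|_{L^p(\R^2)} \lesim 2^{-\delta_p k}\cdot 2^{C_\gamma(j_1+j_2)}\|f\|_{L^p(\R^2)}
\end{equation*}
at $p=6$, which is exactly of the form claimed; the remark after the claim explicitly allows replacing $1/(2p)$ by any positive exponent, so any $\delta_p>0$ from MSS suffices. The main obstacle in this plan is precisely the degeneracy of the cinematic curvature of $\widetilde{\gamma}$: without it, MSS would apply off the shelf, and the $2^{C_\gamma(j_1+j_2)}$ factor built into the claim is exactly the budget needed to absorb the loss incurred in rescaling the cinematic curvature back up to unit size.
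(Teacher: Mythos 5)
Your high-level plan — one-dimensional Sobolev embedding in $v$ to remove the measurable choice $v_{j_1}(x,y)$, then an MSS-type local smoothing estimate on $\|A_vP_kf\|_{L^p_{x,y,v}}$ and $\|\partial_vA_vP_kf\|_{L^p_{x,y,v}}$, accepting a polynomial $2^{C_\gamma(j_1+j_2)}$ loss — is the right skeleton and is the same one the paper gestures at with the phrase ``slightly modifying the argument in \cite{MSS92}''. The identity $\partial_vA_v=-A^{\#}_v\partial_y$ and the cancellation of the $\pm k/p$ exponents after interpolating the Sobolev embedding are also correct, and the paper's remark that any positive power of $2^{-k}$ suffices means you do not need to track the exact $\delta_p$.

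The gap is in the rescaling step. You write that rescaling $v\mapsto\mu v$, $\theta\mapsto\lambda\theta$ multiplies the cinematic curvature by $\mu\lambda^4$, which is true, and then propose to choose $\mu\lambda^4$ to ``match the actual cinematic curvature''. But the variables $v$ and $\theta$ have already been normalized to lie in unit dyadic intervals, and any dilation that enlarges the curvature shrinks those intervals: if you cut $[1,2]\times[1,2]$ into $\mu\times\lambda$ subboxes and rescale each box to unit size, the new phase has cinematic curvature $\mu^{-1}\lambda^{-4}$ times the old one, i.e.\ it becomes \emph{smaller}, not larger. There is also a competing constraint that blocks the fix via an overall multiplication $\gamma\mapsto\nu\gamma$: after the $\theta$-dilation fixes $|\partial_\theta^2\widetilde{\gamma}|\simeq1$ (which one needs for Van der Corput and for the fixed-time part of MSS), the cinematic curvature is whatever it is, $\simeq2^{2\mf{q}_ij_2-4j_2-a_0j_1-b_0j_2}$ per \eqref{240107e4_133jj}, and no residual dilation parameter remains to push it to unit size while keeping the domains and $\partial_\theta^2\widetilde{\gamma}$ under control. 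So ``the net cost is $2^{C_\gamma(j_1+j_2)}$'' cannot be produced by a change of variables. What actually needs to be shown is that every implicit constant in the MSS local smoothing proof — in particular the constant in the Carleson--Sj\"olin/Stein--Tomas oscillatory integral estimate that drives the gain — depends at most polynomially on the lower bound for the cinematic curvature (here $\gtrsim2^{-C(j_1+j_2)}$) and on finitely many $C^N$ norms of $\widetilde{\gamma}$. That is the content of ``slightly modifying MSS'' and it is a genuine verification inside the MSS argument, not an a priori scaling reduction; your proof should either carry it out or at least state this precisely as the inherited dependence, rather than presenting it as a change of variables.
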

Claim \ref{231130claim4_4} can be proven by slightly modifying the argument in Mockenhaupt, Seeger and Sogge \cite{MSS92}, and will not be detailed here. 

We apply Claim \ref{231130claim4_4} and finish the proof of \eqref{240104e4_128}. By interpolating \eqref{231130e4_104} with a trivial $L^{\infty}$ bound and with the $L^2$ bound in \eqref{231130e4_102zz}, we obtain that 
\begin{equation}\label{231130e4_105}
\Norm{
\int_{\R} 
P_k f(x-\theta, y-\widetilde{\gamma}(v_{j_1}(x, y), \theta))
\chi_{+}(\theta)d\theta
}_{
L^p(\R^2)
}
\lesim 
2^{
-C_p k+ \frac{6C_{\gamma}}{p}(j_1+j_2)
}
\norm{f}_{L^p(\R^2)},
\end{equation}
where 
\begin{equation}
C_p:=
\begin{cases}
\frac{1}{2p}, & \text{ if } p\ge 6,\\
\frac14\pnorm{\frac12-\frac1p}, & \text{ if } 2\le p\le 6.
\end{cases}
\end{equation}
Recall that 
\begin{equation}
j_1\le j_2/\alpha_0.
\end{equation}
Let $C_{\gamma}^+$ be a constant that is much larger compared with $C_{\gamma}$ and $\alpha_0$. We bound the left hand side of \eqref{240104e4_128} by 
\begin{equation}\label{240107e4_138zz}
\begin{split}
& \Norm{
\int_{\R} 
\sum_{k\le C^+_{\gamma}j_2}
P_k f(x-\theta, y-\widetilde{\gamma}(v_{j_1}(x, y), \theta))
\chi_{+}(\theta)d\theta
}_{
L^p(\R^2)
}\\
&+ \Norm{
\int_{\R} 
\sum_{k\ge C^+_{\gamma}j_2}
P_k f(x-\theta, y-\widetilde{\gamma}(v_{j_1}(x, y), \theta))
\chi_{+}(\theta)d\theta
}_{
L^p(\R^2)
}\\
& 
\lesim 
C^+_{\gamma}j_2 \norm{f}_{L^p(\R^2)}+ 
2^{
-C_p C^+_{\gamma} j_2 
+
\frac{6C_{\gamma}}{p}(j_1+j_2)
}
\norm{f}_{L^p(\R^2)}.
\end{split}
\end{equation}
Now it is clear that once $C^+_{\gamma}$ is chosen to be large enough, we will be able to obtain \eqref{240104e4_128}. This finishes the discussion on the case $n=0$. \\

Next, we consider the case $n=N+1$. Assume that 
\begin{equation}
\Theta_N(v)= c_N v^{\alpha_N} + o(v^{\alpha_N}),
\end{equation}
where $c_N>0$. Recall that for $\theta\ge \Theta_N(v)$, it holds that 
\begin{equation}\label{231130e4_111}
|P(v, \theta)|\simeq |v|^{a_N} |\theta|^{b_N}.
\end{equation}
We choose $c_{\gamma}>0$ so small that \eqref{231130e4_111} holds for all
\begin{equation}
\theta\ge (1-c_{\gamma}) \Theta_N(v). 
\end{equation}
Recall the notation from \eqref{231130e4_86} and \eqref{231130e4_87}. For each $j_1$, we define $J^-_2(j_1)$ to be an integer such that 
\begin{equation}
\sum_{
j_2\le J_2^-(j_1)
}
\chi_{\bfj}(v, \theta)
\end{equation}
as a function of $\theta$ is supported on
\begin{equation}
\supp(a_i(v, \cdot ))\cap 
((1-c_{\gamma}) \Theta_N(v), \epsilon)
\end{equation}
 and equals one on 
 \begin{equation}
\supp(a_i(v, \cdot ))\cap 
(\Theta_N(v), \epsilon)
\end{equation}
  for all $v$ in the support of $\chi_{j_1}(v)$. What we need to prove can be written as 
\begin{equation}\label{231130e4_114}
\Norm{
\sup_{j_1}\sup_v 
\anorm{
\sum_{
j_2\le J^-_2(j_1)
}
\int_{\R}
f(x-\theta, y-\gamma(v, \theta))
\chi_{\bfj}(v, \theta)d\theta
}
}_{
L^p(\R^2)
}
\lesim 
\norm{f}_{L^p(\R^2)}.
\end{equation}
Recall the notation $v(x, y), \mathfrak{j}_1(x, y)$ and $v_j(x, y)$ from \eqref{231130e4_90} and \eqref{231130e4_91}. To prove \eqref{231130e4_114}, it is equivalent to prove 
\begin{equation}\label{231130e4_115}
\Norm{
\sum_{
j_2\le J^-_2(
\mathfrak{j}_1(x, y)
)
}
\int_{\R} 
f(x-\theta, y-\gamma(v(x, y), \theta))
\chi_{j_2}(\theta)d\theta
}_{
L^p(\R^2)
}
\lesim \norm{f}_{L^p(\R^2)}.
\end{equation}
We fix $j_2$, and aim to prove that there exists $\delta_p>0$ such that 
\begin{equation}\label{231130e4_116}
\Norm{
\int_{\R} 
f(x-\theta, y-\gamma(v(x, y), \theta))
\chi_{j_2}(\theta)d\theta
}_{
L^p(\R^2)
}
\lesim 
2^{
-\delta_p j_2
}
\norm{f}_{L^p(\R^2)},
\end{equation}
where we can without loss of generality assume that 
\begin{equation}
j_2\le J^-_2(
\mathfrak{j}_1(x, y)
), \ \ \forall (x, y).
\end{equation}
By the change of variable
\begin{equation}
\theta\mapsto 2^{-j_2}\theta,
\end{equation}
 to prove \eqref{231130e4_116}, it suffices to prove that 
\begin{equation}\label{231130e4_118}
\Norm{
\int_{\R} 
f(x-\theta, y-\widetilde{\gamma}(v(x, y), \theta))
\chi_{+}(\theta)d\theta
}_{
L^p(\R^2)
}
\lesim 
2^{j_2
-\delta_p j_2
}
\norm{f}_{L^p(\R^2)},
\end{equation}
where 
\begin{equation}
\widetilde{\gamma}(v, \theta):= 
2^{
\mf{q}_i
j_2
}\gamma(v, 2^{-j_2}\theta).
\end{equation}
By Littlewood-Paley inequalities and $L^p$ bounds for the strong maximal operators, \eqref{231130e4_118} follows from 
\begin{equation}\label{231130e4_120}
\Norm{
\int_{\R} 
\sum_{k\in \N} P_k f(x-\theta, y-\widetilde{\gamma}(v(x, y), \theta))
\chi_{+}(\theta)d\theta
}_{
L^p(\R^2)
}
\lesim 
2^{j_2
-\delta_p j_2
}
\norm{f}_{L^p(\R^2)}.
\end{equation}
Here comes the difference with the previous case. We let $\mf{p}'$ be the smallest positive integer such that 
\begin{equation}
(\mf{p}', \mf{q}')\in \mc{T}(\gamma), \text{ for some } \mf{q}'.
\end{equation}
Note that if
\begin{equation}
k\le \mf{p}' 
\mathfrak{j}_1(x, y),
\end{equation}
 then the function $P_k f$ is essentially constant in the second variable at the scale 
\begin{equation}
2^{-k}\ge 2^{-
\mf{p}' \mf{j}_1(x, y)
},
\end{equation}
and as a consequence of Minkowski's inequality, we obtain 
\begin{equation}\label{231130e4_122}
\Norm{
\int_{\R} 
\sum_{k\le \mf{p}' 
\mathfrak{j}_1(x, y)
} P_k f(x-\theta, y-\widetilde{\gamma}(v(x, y), \theta))
\chi_{+}(\theta)d\theta
}_{
L^p(\R^2)
}
\lesim 
2^{j_2
-\delta_p j_2
}
\norm{f}_{L^p(\R^2)}.
\end{equation}
Therefore it remains to prove that 
\begin{equation}\label{231130e4_123}
\Norm{
\int_{\R} 
\sum_{k\ge \mf{p}' 
\mathfrak{j}_1(x, y)
} P_k f(x-\theta, y-\widetilde{\gamma}(v(x, y), \theta))
\chi_{+}(\theta)d\theta
}_{
L^p(\R^2)
}
\lesim 
2^{j_2
-\delta_p j_2
}
\norm{f}_{L^p(\R^2)}.
\end{equation}
By the triangle inequality and Littlewood-Paley inequalities, it suffices to prove that 
\begin{equation}\label{231130e4_125}
\Norm{
\int_{\R} 
P_{
\mf{p}'
j_1+k
} f(x-\theta, y-\widetilde{\gamma}(v_{j_1}(x, y), \theta))
\chi_{+}(\theta)d\theta
}_{
L^p(\R^2)
}
\lesim 
2^{-
\delta_p k
}
2^{j_2
-\delta_p j_2
}
\norm{f}_{L^p(\R^2)},
\end{equation}
which is equivalent to 
\begin{equation}\label{231130e4_126}
\Norm{
\sup_{|v|\simeq 1}
\anorm{
\int_{\R} 
P_{
\mf{p}'
j_1+k
} f(x-\theta, y-\widetilde{\gamma}(2^{-j_1}v, \theta))
\chi_{+}(\theta)d\theta
}
}_{
L^p(\R^2)
}
\lesim 
2^{-
\delta_p k
}
2^{j_2
-\delta_p j_2
}
\norm{f}_{L^p(\R^2)}.
\end{equation}
Note that by Sobolev embedding, Van der Corput's lemma and interpolation with trivial $L^{\infty}$ bounds, we have\footnote{It is crucial here to gain the factor $2^{-j_1/p}$.}
\begin{equation}\label{240107e4_158pp}
\Norm{
\sup_{|v|\simeq 1}
\anorm{
\int_{\R} 
P_{
\mf{p}'
j_1+k
} f(x-\theta, y-\widetilde{\gamma}(2^{-j_1}v, \theta))
\chi_{+}(\theta)d\theta
}
}_{
L^p(\R^2)
}
\lesim 
2^{
\frac{k}{p}
}
2^{
-\frac{k+j_1}{p}
}
\norm{f}_{L^p(\R^2)},
\end{equation}
for every $p\ge 2$. Let $C_{\gamma}^+$ be a large constant depending on $\gamma$; it is the same as the large constant $C_{\gamma}^+$ in \eqref{240107e4_138zz}. If $k\le C_{\gamma}^+ j_1$, then the desired bound \eqref{231130e4_126} follows directly from \eqref{240107e4_158pp}. When $k\ge C_{\gamma}^+ j_1$, \eqref{231130e4_126}  follows from  Claim \ref{231130claim4_4} and interpolation (see \eqref{231130e4_105}). This finishes the proof of the case $n=N+1$.  \\

In the end, we handle the case $n=1, \dots, N$. Our goal is to prove that 
\begin{equation}\label{231202e4_123}
    \Norm{
    \sup_{0\le v\le \epsilon} 
    \anorm{
    \int_{U_n(v)} f(x-\theta, y-\gamma(v, \theta))
    a_i(v, \theta)
    d\theta
    }
    }_{
    L^p(\R^2)
    }
    \lesim_{\gamma, p} \norm{f}_{L^p(\R^2)}.
\end{equation}
It seems convenient to do the change of variable
\begin{equation}
\theta\mapsto 
\theta+ \Theta_n^{\circ}(v),
\end{equation}
and  prove 
\begin{multline}\label{231202e4_125}
    \Norm{
    \sup_{0\le v\le \epsilon} 
    \anorm{
    \int_{
    \Theta_{n, \ell_n}^-
    }^{
    \Theta_{n, \ell_n}^+
    } f(x-\theta-\Theta_n^{\circ}(v), y-\gamma_n(v, \theta
    ))
    a_i(v, 
    \theta+ \Theta_n^{\circ}(v)
    )
    d\theta
    }
    }_{
    L^p(\R^2)
    }\\
    \lesim \norm{f}_{L^p(\R^2)},
\end{multline}
where 
\begin{equation}
\gamma_n(v, \theta):=
\gamma(v, \theta+ 
    \Theta_n^{\circ}(v)
    ).
\end{equation}
Depending on signs of $\Theta_{n, \ell_n}^-$ and $\Theta_{n, \ell_n}^+$, and on the shape of the intersection 
\begin{equation}
\supp(
a_i(v, \cdot+ \Theta_n^{\circ}(v))
)
\cap 
(
\Theta^-_{n, \ell_n}(v), \Theta^+_{n, \ell_n}(v)
),
\end{equation}
 there are multiple cases to consider, and the most involved case is when $\Theta^-_{n, \ell_n}(v)$ and $\Theta^+_{n, \ell_n}(v)$ have opposite signs, and 
 \begin{equation}
 (
\Theta^-_{n, \ell_n}(v), \Theta^+_{n, \ell_n}(v)
)\subset 
\supp(
a_i(v, \cdot+ \Theta_n^{\circ}(v))
),
 \end{equation}
 which we assume from now on. By splitting the integral in \eqref{231202e4_125} into two parts, we without loss of generality will only prove 
\begin{equation}\label{231202e4_126}
\begin{split}
   &  \Norm{
    \sup_{0\le v\le \epsilon} 
    \anorm{
    \int_{
    0
    }^{
    \Theta_{n, \ell_n}^+
    } f(x-\theta-\Theta_n^{\circ}(v), y-\gamma_n(v, \theta
    ))
    d\theta
    }
    }_{
    L^p(\R^2)
    }
    \lesim \norm{f}_{L^p(\R^2)}.
    \end{split}
\end{equation}
Direct computation shows that 
\begin{equation}
|\cine(\gamma_n)|\simeq |v|^{a_n}|\theta|^{b_n}.
\end{equation}
In the algorithm of resolutions of singularities, we have that 
the new function $\gamma_n(v, \theta)$ always satisfies 
\begin{equation}
|\gamma_n(v, \theta)|\simeq 
|\Theta_n^{\circ}(v)
|^{
\mf{q}_i
},
\end{equation}
as the dominating monomial of $\gamma$ in the current case is $\theta^{\mf{q}_i}$. 
Let us write 
\begin{equation}
\begin{split}
& \Theta_n^{\circ}(v)=c_n v^{\alpha_n}+ o(v^{\alpha_n}),\\
& 
\Theta_{n, \ell_n}^+(v)=
c_n^+ v^{
\alpha_n^+
}+o(v^{
\alpha_n^+
}),
\end{split}
\end{equation}
where 
\begin{equation}
0< \alpha_n\le \alpha_n^+.
\end{equation}
Let us write \eqref{231202e4_126} as 
\begin{equation}\label{231202e4_133}
    \Norm{
    \int_{
    0
    }^{
    \Theta_{n, \ell_n}^+(v(x, y))
    } f(x-\theta-\Theta_n^{\circ}(v(x, y)), y-\gamma_n(v(x, y), \theta
    )
    )d\theta
    }_{
    L^p(\R^2)
    }
    \lesim \norm{f}_{L^p(\R^2)}.
\end{equation}
Recall the definition of $\mathfrak{j}_1(x, y)$. Denote 
\begin{equation}
J_1(x, y):= 
\alpha_n^+ \mathfrak{j}_1(x, y).
\end{equation}
Under this notation, \eqref{231202e4_133} can be written as 
\begin{equation}
    \Norm{
    \sum_{
    j_2\ge 
    J_1(x, y)
    }
    \int
     f(x-\theta-\Theta_n^{\circ}(v(x, y)), y-\gamma_n(v(x, y), \theta
    )
    )
        \chi_{j_2}(\theta)
        d\theta
    }_{
    L^p(\R^2)
    }
    \lesim \norm{f}_{L^p(\R^2)}.
\end{equation}
By the triangle inequality applied to $j_2$, it suffices to prove 
\begin{equation}\label{231203e4_135}
    \Norm{
\int     f(x-\theta-\Theta_n^{\circ}(v(x, y)), y-\gamma_n(v(x, y), \theta
    )
    )
        \chi_{j_2}(\theta)
        d\theta
    }_{
    L^p(\R^2)
    }
    \lesim
    2^{
    -\delta_p j_2
    }
    \norm{f}_{L^p(\R^2)},
\end{equation}
for every $j_2\in \N$, and we can without loss of generality assume that 
\begin{equation}
J_1(x, y)\le j_2, \ \ \forall (x, y).
\end{equation}
By the triangle inequality applied in $j_1$, it suffices to prove that 
\begin{equation}\label{231203e4_138}
    \Norm{
\int     f(x-\theta-\Theta_n^{\circ}(v_{j_1}(x, y)), y-\gamma_n(v_{j_1}(x, y), \theta
    )
    )
        \chi_{j_2}(\theta)
        d\theta
    }_{
    L^p(\R^2)
    }
    \lesim
    2^{
    -\delta_p j_2
    }
    \norm{f}_{L^p(\R^2)}.
\end{equation}
Recall that 
\begin{equation}
\gamma_n(v, \theta)=
(\theta+
\Theta_n^{\circ}(v)
)^{
\mf{q}_i
}+ \text{higher order terms.}
\end{equation}
We apply the change of variable 
\begin{equation}
\theta\mapsto 2^{-j_2}\theta,
\end{equation} 
rename the parameter $v\to 2^{-j_1}v,$ and need to prove 
\begin{equation}\label{231203e4_143}
\begin{split}
&    \Norm{
\int     f(x-2^{-j_2+ \alpha_n j_1}\theta-2^{\alpha_n j_1}\Theta_n^{\circ}(2^{-j_1}v_{0}(x, y)), y-\gamma_{n, \bfj}(v_0(x, y), \theta)
    )
        \chi_{+}(\theta)
        d\theta
    }_{
    L^p(\R^2)
    }\\
    & 
    \lesim
    2^{j_2
    -\delta_p j_2
    }
    \norm{f}_{L^p(\R^2)},
\end{split}
\end{equation}
where 
\begin{equation}
\gamma_{n, \bfj}(v, \theta):=
2^{
\mf{q}_i
\alpha_n j_1
}
\gamma_n(2^{-j_1}v, 2^{-j_2}\theta).
\end{equation}
We first apply a Littlewood-Paley decomposition in the second variable, and need to prove 
\begin{equation}\label{231203e4_145}
\begin{split}
&    \Norm{
\int    P_{\le 0} f(x-2^{-j_2+ \alpha_n j_1}\theta-2^{\alpha_n j_1}\Theta_n^{\circ}(2^{-j_1}v_{0}(x, y)), y-\gamma_{n, \bfj}(v_0(x, y), \theta)
    )
        \chi_{+}(\theta)
        d\theta
    }_{
    L^p(\R^2)
    }\\
    & 
    \lesim
    2^{j_2
    -\delta_p j_2
    }
    \norm{f}_{L^p(\R^2)},
\end{split}
\end{equation}
and 
\begin{equation}\label{231203e4_146}
\begin{split}
&    \Norm{
\int    P_{\ge 0} f(x-2^{-j_2+ \alpha_n j_1}\theta-2^{\alpha_n j_1}\Theta_n^{\circ}(2^{-j_1}v_{0}(x, y)), y-\gamma_{n, \bfj}(v_0(x, y), \theta)
    )
        \chi_{+}(\theta)
        d\theta
    }_{
    L^p(\R^2)
    }\\
    & 
    \lesim
    2^{j_2
    -\delta_p j_2
    }
    \norm{f}_{L^p(\R^2)}.
\end{split}
\end{equation}
The low frequency part \eqref{231203e4_145} is more tricky than before, because of the $x$ term. First of all, by Taylor's expansion in the second variable, it is equivalent to 
\begin{equation}\label{231203e4_147}
\begin{split}
&    \Norm{
\int    P_{\le 0} f(x-2^{-j_2+ \alpha_n j_1}\theta-2^{\alpha_n j_1}\Theta_n^{\circ}(2^{-j_1}v_{0}(x, y)), y
    )
        \chi_{+}(\theta)
        d\theta
    }_{
    L^p(\R^2)
    }\\
    & 
    \lesim
    2^{j_2
    -\delta_p j_2
    }
    \norm{f}_{L^p(\R^2)}.
\end{split}
\end{equation}
Let $P'_{k}$ be a Littlewood-Paley projection operator in the first variable. The low frequency part $P'_{\le 0}$ is trivial. To prove \eqref{231203e4_147}, it suffices to prove 
\begin{equation}\label{231203e4_148}
\begin{split}
&    \Norm{
\int   P'_{\ge 0} P_{\le 0} f(x-2^{-j_2+ \alpha_n j_1}\theta-2^{\alpha_n j_1}\Theta_n^{\circ}(2^{-j_1}v_{0}(x, y)), y
    )
        \chi_{+}(\theta)
        d\theta
    }_{
    L^p(\R^2)
    }\\
    & 
    \lesim
    2^{j_2
    -\delta_p j_2
    }
    \norm{f}_{L^p(\R^2)}.
\end{split}
\end{equation}
We consider two different frequencies: 
\begin{equation}
P'_{\ge 0}= 
P'_{\ge j_2-\alpha_n j_1}+ P'_{\le  j_2-\alpha_n j_1}.
\end{equation}
For the low frequency part $P'_{\le  j_2-\alpha_n j_1}$, observe that the term $2^{-j_2+ \alpha_n j_1}\theta$ is negligible. For the high frequency part $P'_{\ge j_2-\alpha_n j_1}$, we will explore cancellations when  integrating in the $\theta$ variable. Let us be more precise. We will prove that 
\begin{equation}\label{231203e4_149}
\begin{split}
&    \Norm{
\int   P'_{\ge j_2-\alpha_n j_1} P_{\le 0} f(x-2^{-j_2+ \alpha_n j_1}\theta-2^{\alpha_n j_1}\Theta_n^{\circ}(2^{-j_1}v_{0}(x, y)), y
    )
        \chi_{+}(\theta)
        d\theta
    }_{
    L^p(\R^2)
    }\\
    & 
    \lesim
    2^{j_2
    -\delta_p j_2
    }
    \norm{f}_{L^p(\R^2)},
\end{split}
\end{equation}
and 
\begin{equation}\label{231203e4_150}
\begin{split}
&    \Norm{
\int   P'_{\le j_2-\alpha_n j_1} P_{\le 0} f(x-2^{-j_2+ \alpha_n j_1}\theta-2^{\alpha_n j_1}\Theta_n^{\circ}(2^{-j_1}v_{0}(x, y)), y
    )
        \chi_{+}(\theta)
        d\theta
    }_{
    L^p(\R^2)
    }\\
    & 
    \lesim
    2^{j_2
    -\delta_p j_2
    }
    \norm{f}_{L^p(\R^2)}.
\end{split}
\end{equation}
Consider \eqref{231203e4_150} first. It is equivalent to 
\begin{equation}\label{231203e4_151}
\begin{split}
&    \Norm{
\int   P'_{\le j_2-\alpha_n j_1} P_{\le 0} f(x-2^{\alpha_n j_1}\Theta_n^{\circ}(2^{-j_1}v_{0}(x, y)), y
    )
        \chi_{+}(\theta)
        d\theta
    }_{
    L^p(\R^2)
    }\\
    & 
    \lesim
    2^{j_2
    -\delta_p j_2
    }
    \norm{f}_{L^p(\R^2)},
\end{split}
\end{equation}
which holds for all $p>1$, and follows directly from Sobolev embedding. Consider \eqref{231203e4_149} next. We first apply Sobolev embedding in the $v$ variable, and then integrate in the $\theta$ variable for every fixed $v$. This finishes the discussion of \eqref{231203e4_145}.\\

From now on, we focus on the high frequency part \eqref{231203e4_146}. We apply a Littlewood-Paley decomposition in the first variable, and consider the contributions from $P'_{\ge 0}$ and $P'_{\le 0}$ separately. The lower frequency part $P'_{\le 0}$ is easier, and here we will only write down details for the high frequency part. More precisely, we will prove 
\begin{equation}\label{231203e4_152}
\begin{split}
&    \Norm{
\int    \bfP_{\ge 0} f(x-2^{-j_2+ \alpha_n j_1}\theta-2^{\alpha_n j_1}\Theta_n^{\circ}(2^{-j_1}v_{0}(x, y)), y-\gamma_{n, \bfj}(v_0(x, y), \theta)
    )
        \chi_{+}(\theta)
        d\theta
    }_{
    L^p(\R^2)
    }\\
    & 
    \lesim
    2^{j_2
    -\delta_p j_2
    }
    \norm{f}_{L^p(\R^2)},
\end{split}
\end{equation}
where 
\begin{equation}
\bfP_{\ge 0}:=P'_{\ge 0} P_{\ge 0}.
\end{equation}
Write 
\begin{equation}
\bfk=(k_1, k_2)\in \N^2, \ \ |\bfk|:=\max\{|k_1|, |k_2|\}.
\end{equation}
To prove \eqref{231203e4_152}, it suffices to prove 
\begin{equation}\label{231203e4_155}
\begin{split}
&    \Norm{
\int    \bfP_{\bfk} f(x-2^{-j_2+ \alpha_n j_1}\theta-2^{\alpha_n j_1}\Theta_n^{\circ}(2^{-j_1}v_{0}(x, y)), y-\gamma_{n, \bfj}(v_0(x, y), \theta)
    )
        \chi_{+}(\theta)
        d\theta
    }_{
    L^p(\R^2)
    }\\
    & 
    \lesim
    2^{
    -\delta_p |\bfk|
    }
    2^{j_2
    -\delta_p j_2
    }
    \norm{f}_{L^p(\R^2)}.
\end{split}
\end{equation}
By Sobolev embedding, it suffices to prove 
\begin{equation}\label{231203e4_156}
\begin{split}
&    \Norm{
\int    \bfP_{\bfk} f(x-2^{-j_2+ \alpha_n j_1}\theta, y-\gamma_{n, \bfj}(v, \theta)
    )
        \chi_{+}(\theta)
        d\theta
    }_{
    L^p_{x, y, v}
    }\\
    & 
    \lesim
    2^{
    -\delta_p |\bfk|-
    \frac{|\bfk|}{p}
    }
    2^{j_2
    -\delta_p j_2
    }
    \norm{f}_{L^p(\R^2)}.
\end{split}
\end{equation}
Here when integrating in the $x, y, v$ variables, we implicitly applied a change of variables in $x$ and $v$, and removed the term
\begin{equation}
2^{\alpha_n j_1}\Theta_n^{\circ}(2^{-j_1}v_{0}(x, y)).
\end{equation}
  Recall that 
\begin{equation}
\gamma_{n, \bfj}(v, \theta):=
2^{
\mf{q}_i
\alpha_n j_1
}
\gamma_n(2^{-j_1}v, 2^{-j_2}\theta),
\end{equation}
and 
\begin{equation}
\gamma_n(v, \theta):=
\gamma(v, \theta+ 
    \Theta_n^{\circ}(v)
    ).
\end{equation}
Moreover, recall that 
\begin{equation}
|\cine(\gamma_n)|\simeq |v|^{a_n}|\theta|^{b_n}.
\end{equation}
Note that 
\begin{equation}
|
\partial_{\theta}^2\gamma_{n, \bfj}(v, \theta)
|\simeq 
2^{
-2j_2+2\alpha_n j_1
}.
\end{equation}
Therefore, by Van der Corput's lemma and interpolation with trivial $L^{\infty}$ bounds, we obtain
\begin{equation}\label{231203e4_161}
\begin{split}
&    \Norm{
\int    \bfP_{\bfk} f(x-2^{-j_2+ \alpha_n j_1}\theta, y-\gamma_{n, \bfj}(v, \theta)
    )
        \chi_{+}(\theta)
        d\theta
    }_{
    L^p_{x, y, v}
    }\\
    & 
    \lesim
    2^{-
    \frac{k_2}{p}
    }
    2^{\frac{2j_2}{p}
    }
    \norm{f}_{L^p(\R^2)}.
\end{split}
\end{equation}
By balancing $k_1$ and $k_2$, this proves \eqref{231203e4_156} for all $p>2$ and $|\bfk|\le C_{p, \gamma} j_2$, where $C_{p, \gamma}$ is a large constant depending on $p$ and $\gamma$. Therefore, we only need to consider $|\bfk|\ge C_{p, \gamma} j_2$, which follows directly from Claim \ref{231130claim4_4}. This finishes the discussion on the case \eqref{231203e4_146}.

\subsubsection{The first subcase}

In this subsection we consider the case when there exists a vertex-tangent line $\mf{L}\in 
\mf{L}_{
(\mf{p}_i, \mf{q}_i)
}
$
 passing through $(\mf{p}_0, 0)$. Recall from \eqref{240104e4_91} that we need to prove 
\begin{equation}\label{240105e4_196}
\Norm{
\sup_{|v|\le \epsilon}
\anorm{
\int_{
\R
}
f(x-\theta, y-\gamma(v, \theta)) a_i(v, \theta)d\theta
}
}_{L^p(\R^2)}
\lesim_{
\gamma, p, a
}
\norm{f}_{L^p(\R^2)},
\end{equation}
for all 
\begin{equation}
p> \max\{
2, \mf{D}_{\gamma}
\}.
\end{equation}
In particular, in this subsection we will see the exponent $\mf{D}_{\gamma}$.  It suffices to prove that 
\begin{equation}\label{231201e4_130}
    \Norm{
    \sup_{|v|\le \epsilon} 
    \anorm{
    \int_{\R} f(x-\theta, y-\gamma(v(x, y), \theta))
    a_i(v, \theta)
    \chi_{j_2}(\theta)d\theta
    }
    }_{
    L^p(\R^2)
    }
    \lesim_{\gamma, \chi, p} 
    2^{-\delta_p j_2}
    \norm{f}_{L^p(\R^2)}.
\end{equation}
We consider two cases separately: The first case is when 
\begin{equation}\label{231201e4_131}
2^{-
\mf{q}_i
 j_2}\ge 2^{-
\mf{p}_0
\mathfrak{j}_1(x, y)
},
\end{equation}
and the second case is 
\begin{equation}\label{231201e4_132}
2^{-\mf{q}_i j_2}\le 2^{-
\mf{p}_0
\mathfrak{j}_1(x, y)
}.
\end{equation}
We start with the easier case \eqref{231201e4_131}. In this case, we still have that 
\begin{equation}
|\gamma(v, \theta)|\simeq |\theta|^{\mf{q}_i},
\end{equation}
exactly the same as what we had in Subsection \ref{240106subsection4_2_1}. Indeed, one can also repeat exactly the same proof there and prove \eqref{231201e4_130}, for all $p>2$. We leave out the details. \\

We next discuss the case \eqref{231201e4_132}. This case is more interesting, and we will see the  exponent $p>\mf{D}_{\gamma}$.  By the triangle inequality, it suffices to prove that
\begin{multline}\label{231201e4_135}
    \Norm{
    \int_{\R} f(x-\theta, y-\gamma(v_{j_1}(x, y), \theta))
    a_i(v_{j_1}(x, y), \theta)
    \chi_{j_2}(\theta)d\theta
    }_{
    L^p(\R^2)
    }\\
    \lesim_{\gamma, \chi, p} 
    2^{-\delta_p j_2}
    \norm{f}_{L^p(\R^2)},
\end{multline}  
for every
\begin{equation}
j_1\le 
\mf{q}_i
j_2/\mf{p}_0.
\end{equation}
Recall the notation $\gamma_{\mf{r}\mf{e}}$ in \eqref{240106e4_3}. Moreover, recall that on the support of $a_i(v, \theta)$, we have 
\begin{equation}
|\gamma_{\mf{r}\mf{e}}(v, \theta)|\simeq |\theta|^{\mf{q}_i}.
\end{equation} 
When proving \eqref{231201e4_135}, we can always without loss of generality assume that 
\begin{equation}\label{240106e4_206}
\supp(\chi_{j_2})\subset 
\supp(
a_i(v, \cdot)
), 
\end{equation}
for all $|v|\simeq 2^{-j_1}$, as otherwise there is nothing to prove.  We therefore from now on always assume \eqref{240106e4_206}, and \eqref{231201e4_135} becomes 
\begin{equation}\label{240106e4_207}
    \Norm{
    \int_{\R} f(x-\theta, y-\gamma(v_{j_1}(x, y), \theta))
    \chi_{j_2}(\theta)d\theta
    }_{
    L^p(\R^2)
    }
    \lesim_{\gamma, \chi, p} 
    2^{-\delta_p j_2}
    \norm{f}_{L^p(\R^2)}.
\end{equation}  
 By changing variable in $\theta$ and renaming $v$ variables, it is equivalent to prove 
\begin{equation}
    \Norm{
    \sup_{|v|\simeq 1} 
    \anorm{
    \int_{\R} f(x-\theta, y-\gamma_{\bfj}(v, \theta))\chi_{+}(\theta)d\theta
    }
    }_{
    L^p(\R^2)
    }
    \lesim_{\gamma, \chi, p} 
    2^{j_2-\delta_p j_2}
    \norm{f}_{L^p(\R^2)},
\end{equation}
where 
\begin{equation}
\gamma_{\bfj}(v, \theta):=
2^{
\mf{p}_0
j_1
}
\gamma(
2^{-j_1}v, 2^{-j_2}\theta
).
\end{equation}
By applying a  Littlewood-Paley decomposition in the second variable, and by the triangle inequality, 
it suffices to prove that 
\begin{equation}\label{231201e4_139}
    \Norm{
    \sup_{|v|\simeq 1} 
    \anorm{
    \int_{\R} 
    P_k f(x-\theta, y-\gamma_{\bfj}(v, \theta))\chi_{+}(\theta)d\theta
    }
    }_{
    L^p(\R^2)
    }
    \lesim_{\gamma, \chi, p} 
    2^{
    -\delta_p k
    }
    2^{j_2-\delta_p j_2}
    \norm{f}_{L^p(\R^2)},
\end{equation}
for every $k\in \N$. By Sobolev embedding, Van der Corput's lemma and interpolation with trivial $L^{\infty}$ bounds, we can bound the left hand side by 
\begin{equation}\label{240106e4_212}
2^{
\frac{k}{p}}
2^{-
\frac{
k+
\mf{p}_0
j_1-
\mf{q}_i
j_2
}{p}
}\norm{f}_p. 
\end{equation}
Consider the neighboring vertex $(\mf{p}_{i+1}, \mf{q}_{i+1})$ of the vertex $(0, \mf{q}_i)$ in the reduced Newton diagram $\mc{R}\mc{N}_d(\gamma)$. The relation of $j_1$ and $j_2$ is given by 
\begin{equation}\label{240106e4_213}
2^{
-j_2\mf{q}_i
}
\ge 2^{
-j_1 \mf{p}_{i+1}
-j_2
\mf{q}_{i+1}
}\implies 
(\mf{q}_i-\mf{q}_{i+1}) j_2\le \mf{p}_{i+1} j_1.
\end{equation}
We compare the constant in \eqref{240106e4_212} with that on the right hand side of \eqref{231201e4_139}, and let 
\begin{equation}
\frac{k}{p}-\frac{
k+\mf{p}_0 j_1-\mf{q}_i j_2
}{p}
\le j_2.
\end{equation}
This gives us the constraint 
\begin{equation}
p\ge 
\mf{q}_i-
\mf{p}_0 \frac{j_1}{j_2},
\end{equation}
for all $j_1, j_2$ satisfying \eqref{240106e4_212}, which further leads to the constraint 
\begin{equation}\label{240106e4_216}
p\ge 
\mf{q}_i-
\mf{p}_0 
\frac{
\mf{p}_{i+1}
}{
\mf{q}_i-\mf{q}_{i+1}
}.
\end{equation}
Note that the right hand side of \eqref{240106e4_216} is exactly the vertical distance between the vertex $(\mf{p}_0, 0)$ and the line connecting the vertices $(0, \mf{p}_i)$ and $(\mf{p}_{i+1}, \mf{q}_{i+1})$. This finishes the proof of \eqref{231201e4_139} with $\delta_p=0$ for all 
\begin{equation}\label{240106e4_217}
p> \max\{2, \mf{D}_{\gamma}\}.
\end{equation}
To prove \eqref{231201e4_139} for all $p$ satisfying \eqref{240106e4_217} and some $\delta_p>0$, by interpolation, we only need to prove it for sufficiently large $p$, and this can be obtained by repeating exactly the same local smoothing estimates in Subsection \ref{240106subsection4_2_1} (for instance Claim \ref{231130claim4_4}).

\subsection{
The dominating polynomial \texorpdfstring{$v^{\mf{p}_i} \theta^{\mf{q}_i}$}{} satisfies 
\texorpdfstring{$\mf{p}_i\ge 1, \mf{q}_i=1$}{}
and \texorpdfstring{$i=1$}{}
}\label{240108subsection4_3}

In this subsection we consider the case where 
\begin{equation}\label{240108e4_219}
    \gamma(v, \theta)=
    c_{
    \mf{p}_i, 1
    } v^{\mf{p}_i} \theta+ o(|v|^{\mf{p}_i} |\theta|)+ h_1(v),\ c_{
    \mf{p}_i, 1
    }\neq 0, \ \mf{p}_i\ge 1,
\end{equation}
for all $(v, \theta)$ in the support of $a_i$, 
and $h_1$ collections all the terms in the Taylor expansion of $\gamma$ that depend purely on $v$. The extra assumption that $i=1$ carries a lot of information. Recall how we labelled vertices of the reduced Newton diagram $\rn_d(\gamma)$ in \eqref{240108e4_5}. That $i=1$, combined with the fact that $\mf{q}_i=1$, implies that $v^{\mf{p}_i}\theta$ is the only vertex in the reduced Newton diagram $\rn_d(\gamma)$. As a consequence, we see that \eqref{240108e4_219} holds for all $(v, \theta)$. 

Recall the definition of strongly degenerate functions in Definition \ref{231122defi1_5}. In particular, note that $\gamma$ satisfies the assumption of item (2) in Definition \ref{231122defi1_5}, which says that the reduced Newton diagram $\rn_d(\gamma)$ consists of only one vertex $(\mf{p}, 1)$ for some $\mf{p}\ge 1$. 
As we are assuming that $\gamma$ is not strongly degenerate, we can therefore write 
\begin{equation}\label{240125e4_222}
\gamma(v, \theta)
=
v^{\mf{p}_i} h_2(\theta)+ O(|v|^{\mf{p}_i+1} |\theta|)+ h_1(v),
\end{equation}
where 
\begin{equation}
h_2(\theta)=\theta+ o(\theta),
\end{equation}
and the cinematic curvature of $v^{\mf{p}_i} h_2(\theta)$ does not vanish constantly. Moreover, by item (3) in Definition \ref{231122defi1_5}, we obtain that 
\begin{equation}
h_1(v)= O(v^{\mf{p}_i}).
\end{equation}
Recall from \eqref{240102e4_24} that we need to prove 
\begin{equation}\label{240106e4_221}
\Norm{
\sup_{|v|\le \epsilon}
\anorm{
\int_{
\R
}
f(x-\theta, y-\gamma(v, \theta)) a_i(v, \theta)d\theta
}
}_{L^p(\R^2)}
\lesim_{
\gamma, p, a
}
\norm{f}_{L^p(\R^2)},
\end{equation}
for all 
\begin{equation}\label{240108e4_224}
p> \max\{
2, \mf{D}_{\gamma}
\}=2.
\end{equation}
In \eqref{240108e4_224}, we used the fact that $\mf{D}_{\gamma}\le 1$ in the current case. We follow the first few steps of Subsection \ref{240103subsubsection4_1_1} (or similarly Subsection \ref{240104subsubsection4_1_2}), and need to prove 
\begin{equation}\label{240108e4_242kkk}
\begin{split}
& \Norm{
\sum_{k\in \Z}
\int_{
\R
}
P_k f(x-\theta, y-\gamma(v(x, y), \theta))
\chi_{j_2}(\theta)
d\theta
}_{L^p(\R^2)}\lesim
2^{-\delta_p j_2}
\norm{f}_{L^p(\R^2)},
\end{split}
\end{equation}
for all $p>2, j_2\in \N$ and some $\delta_p>0$. In \eqref{240108e4_242kkk} we do not have the term $a_i(v(x, y), \theta)$ in \eqref{240106e4_221} because there is only one vertex in the reduced Newton diagram $\rn_d(\gamma)$. When proving \eqref{240108e4_242kkk}, the low frequency part $k\le 
\mf{p}_i \mf{j}_1(x, y)
$ can be controlled by the strong maximal operator. We only need to prove 
\begin{equation}\label{240108e4_242kkkk}
\begin{split}
& \Norm{
\sum_{k\ge \mf{p}_i
\mf{j}_1(x, y)
}
\int_{
\R
}
P_k f(x-\theta, y-\gamma(v(x, y), \theta))
\chi_{j_2}(\theta)
d\theta
}_{L^p(\R^2)}\lesim
2^{-\delta_p j_2}
\norm{f}_{L^p(\R^2)}.
\end{split}
\end{equation}
By Littlewood-Paley inequalities, it suffices to prove
\begin{equation}\label{240111e4_227jj}
\begin{split}
& \Norm{
\int_{
\R
}
P_k f(x-\theta, y-\widetilde{\gamma}(v_0(x, y), \theta))
\chi_{j_2}(\theta)
d\theta
}_{L^p(\R^2)}\lesim
2^{-\delta_p j_2
-\delta_p k
}
\norm{f}_{L^p(\R^2)},
\end{split}
\end{equation} 
for all $k\in \N$, 
where 
\begin{equation}
\widetilde{\gamma}(v, \theta):=
2^{\mf{p}_i j_1}\gamma(2^{-j_1}v, \theta).
\end{equation}
Here we find it much more convenient not to do the change of variable $\theta\mapsto 2^{-j_2}\theta$. 
\begin{claim}\label{240111claim4_4}
For every $j_2\in \N, k\in \N$, it holds that 
\begin{equation}\label{240111e4_255}
\begin{split}
& \Norm{
\int_{
\R
}
P_{
j_2+k
} f(x-\theta, y-\widetilde{\gamma}(v_{0}(x, y), \theta))
\chi_{j_2}(\theta)
d\theta
}_{L^2(\R^2)}\lesim_{\delta}
2^{\delta j_2+\delta k}
\norm{f}_{L^2(\R^2)},
\end{split}
\end{equation}
for every $\delta>0$. 
\end{claim}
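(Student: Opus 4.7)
The plan is a Sobolev--Plancherel argument for the operator $T_{v}f(x,y):=\int P_{j_{2}+k}f(x-\theta,y-\widetilde{\gamma}(v,\theta))\chi_{j_{2}}(\theta)\,d\theta$. First, by the Sobolev inequality
\[
\sup_{|v|\simeq 1}|F(v)|^{2}\lesim \int|F|^{2}\,dv+\Bigl(\int|F|^{2}\,dv\Bigr)^{1/2}\Bigl(\int|F'|^{2}\,dv\Bigr)^{1/2}
\]
applied pointwise in $(x,y)$ to $F(v):=T_{v}f(x,y)$, the claim reduces to bounding $\|T_{v}f\|_{L^{2}_{xyv}}$ and $\|\partial_{v}T_{v}f\|_{L^{2}_{xyv}}$. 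By Plancherel in $(x,y)$, these are controlled by the $L^{\infty}_{\xi,\eta,v}$-norm of the Fourier multiplier
\[
m(v,\xi,\eta):=\rho(2^{-j_{2}-k}\eta)\int e^{-i(\theta\xi+\widetilde{\gamma}(v,\theta)\eta)}\chi_{j_{2}}(\theta)\,d\theta
\]
and of its $v$-derivative, where $\rho$ is the Littlewood--Paley bump associated to $P_{j_{2}+k}$.

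Next, I would estimate $m$ via stationary phase in $\theta$ with phase $\phi(\theta):=\theta\xi+\widetilde{\gamma}(v,\theta)\eta$. The derivatives satisfy $\phi'(\theta)=\xi+(v^{\mf{p}_{i}}h_{2}'(\theta)+O(2^{-j_{1}}))\eta$ and $\phi''(\theta)=(v^{\mf{p}_{i}}h_{2}''(\theta)+O(2^{-j_{1}}))\eta$. Since $h_{2}'(\theta)=1+O(\theta)\simeq 1$ on the support, the phase is non-stationary unless $|\xi|\simeq|\eta|\simeq 2^{j_{2}+k}$; outside this set, $m$ and $\partial_{v}m$ are $O_{N}(2^{-N(j_{2}+k)})$ by repeated integration by parts. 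In the stationary region, the decay of $m$ is governed by $|\phi''|$. Because $\gamma$ is not strongly degenerate and the reduced Newton diagram of $\gamma$ consists of the single vertex $(\mf{p}_{i},1)$, the cinematic curvature of $v^{\mf{p}_{i}}h_{2}(\theta)$ is not identically zero; a direct computation shows this cinematic curvature equals $\mf{p}_{i}v^{2\mf{p}_{i}-1}(h_{2}''^{2}-h_{2}'''h_{2}')$, and the non-vanishing of this quantity forces $h_{2}''\not\equiv 0$ (the alternative is ruled out by Lemma \ref{231122lemma2_1}). Hence $h_{2}''$ vanishes at $\theta=0$ to some finite order $\ell\geq 0$, and on $\supp \chi_{j_{2}}$ one has $|h_{2}''(\theta)|\simeq 2^{-\ell j_{2}}$ and thus $|\phi''|\simeq 2^{k+(1-\ell)j_{2}}$.

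Combining the stationary-phase bound $|m|\lesim |\phi''|^{-1/2}$ when $|\phi''|\gtrsim 2^{2j_{2}}$ with the trivial bound $|m|\lesim 2^{-j_{2}}$ elsewhere, and the parallel bound $|\partial_{v}m|\lesim 2^{k}|m|$ coming from $|\partial_{v}\widetilde{\gamma}|\simeq 2^{-j_{2}}$, the Sobolev inequality delivers the desired $2^{\delta(j_{2}+k)}$-bound in most ranges of $(j_{2},k)$. The main obstacle is the intermediate range where the stationary-phase gain from $|\phi''|^{-1/2}$ is insufficient to absorb the polynomial loss $2^{(\ell-1)j_{2}/2}$ into $2^{\delta(j_{2}+k)}$; in that range, I would invoke a local smoothing estimate of Mockenhaupt--Seeger--Sogge type (parallel to Claim \ref{231130claim4_4}) applied on each dyadic $v$-window where the cinematic curvature of $v^{\mf{p}_{i}}h_{2}(\theta)$ has controlled size, and interpolate with trivial $L^{\infty}$-bounds to absorb the resulting loss into an arbitrarily small power of $2^{j_{2}+k}$.
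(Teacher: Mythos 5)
The paper's proof of Claim~\ref{240111claim4_4} is one line: after dropping the absolute values one verifies the \emph{rotational} curvature condition
$|\partial_{v}\partial_{\theta}\widetilde{\gamma}|\simeq 1$ (a trivial computation since the reduced Newton diagram is the single vertex $(\mf{p}_i,1)$), and then cites the main theorem of Zahl~\cite{Zah23}, a geometric/combinatorial $L^2(\log)$--maximal inequality for families of plane curves that requires only non-vanishing rotational curvature. Your route is genuinely different --- Sobolev embedding in $v$, Plancherel, stationary phase --- and as written it has concrete gaps.

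The first gap is the claim $|\partial_{v}\widetilde{\gamma}|\simeq 2^{-j_{2}}$. Here $\widetilde{\gamma}(v,\theta)=2^{\mf{p}_i j_1}\gamma(2^{-j_1}v,\theta)$ contains the term $\widetilde{h}_1(v):=2^{\mf{p}_i j_1}h_1(2^{-j_1}v)$, and since $h_1(v)=O(v^{\mf{p}_i})$ (with equality possible at $\mf{p}_0=\mf{p}_i$) one has $\partial_v\widetilde{h}_1(v)=O(1)$, and indeed $\simeq 1$ when $c_{\mf{p}_i,0}\neq 0$. So the true bound is $|\partial_v m|\lesssim 2^{j_2+k}|m|$, and Sobolev embedding costs $2^{(j_2+k)/2}$, not $2^{k/2}$. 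The second gap is that $h_2''$ is only known to be $\not\equiv 0$; the vanishing order $\ell$ of $h_2''$ at $\theta=0$ can be any nonnegative integer. When $\ell\ge 1$ the stationary-phase gain $|m|\lesssim |\phi''|^{-1/2}\simeq 2^{-(j_2+k-\ell j_2)/2}$ (valid only for $k\gtrsim (1+\ell)j_2$) leaves a net loss of order $2^{\ell j_2/2}$ after Sobolev; and for $k\lesssim (1+\ell)j_2$ only the trivial $|m|\lesssim 2^{-j_2}$ is available, which again leaves $2^{\ell j_2/2}$. Neither is absorbed by $2^{\delta(j_2+k)}$. The third gap is the proposed fallback: the cinematic curvature of $v^{\mf{p}_i}h_2(\theta)$ is $\mf{p}_i v^{2\mf{p}_i-1}(h_2''^2-h_2'h_2''')$, which itself vanishes to order $\ell-1$ at $\theta=0$, so an MSS-type local smoothing estimate applied on the $\theta$-window $|\theta|\simeq 2^{-j_2}$ degrades at exactly the same rate and cannot recover the lost $2^{\ell j_2/2}$ with an $\epsilon$-loss only. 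Also, $|\phi''|\simeq 2^{k+(1-\ell)j_2}$ only if $\ell j_2\lesssim j_1$; the $O(2^{-j_1})$ error term may dominate otherwise.

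The structural lesson, which the paper makes explicit in the remark following \eqref{240107e4_237}: every Fourier-analytic curvature quantity ($\phi''$, or the cinematic curvature) can degenerate as $\theta\to 0$ or as $j_1\to\infty$, whereas the rotational curvature $\partial_v\partial_\theta\widetilde{\gamma}$ stays bounded below uniformly in $j_1,j_2,\theta$. Zahl's theorem is tailored to exactly that hypothesis, which is why the paper invokes it here; Claim~\ref{240111claim4_4} is then used only in the range $k\lesssim j_2$, with Claim~\ref{231130claim4_4} (MSS local smoothing, which does need a cinematic curvature lower bound) covering $k\gtrsim j_2$.
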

\begin{proof}[Proof of Claim \ref{240111claim4_4}]
Recall that  $\chi: \R\to \R$ is a smooth bump function supported on $(-\epsilon, \epsilon)$. By the triangle inequality, we bound the left hand side of \eqref{240111e4_255} by 
\begin{equation}
\begin{split}
&
\Norm{
\int_{
\R
}
|
P_{
j_2+k
} f(x-\theta, y-\widetilde{\gamma}(v_{0}(x, y), \theta))
\chi_{j_2}(\theta)
|
d\theta
}_{L^2(\R^2)}\\
& 
\le 
\Norm{
\int_{
\R
}
|
P_{
j_2+k
} f(x-\theta, y-\widetilde{\gamma}(v_{0}(x, y), \theta))
\chi(\theta)
|
d\theta
}_{L^2(\R^2)}.
\end{split}
\end{equation}
Denote 
\begin{equation}
\Gamma(\theta; y, v):=
y-\widetilde{\gamma}(v, \theta).
\end{equation}
If we can show that 
\begin{equation}\label{240111e4_231}
\det
\begin{bmatrix}
\partial_y \Gamma, & \partial_v \Gamma\\
\partial_y\partial_{\theta} \Gamma, & \partial_v \partial_{\theta} \Gamma
\end{bmatrix}
\end{equation}
never vanishes, then Claim \ref{240111claim4_4} follows directly from the (one-parameter case of the)  main theorem in Zahl \cite{Zah23}. Direct computations show that 
\begin{equation}\label{240111e4_232}
\eqref{240111e4_231}=
|\partial_v \partial_{\theta}\Gamma|=
|\partial_v \partial_{\theta}\gamma_{\bfj}|
\simeq 1,
\end{equation}
and this finishes the proof of Claim \ref{240111claim4_4}. 
\end{proof}

Let $p>2$. Let $C_{p, \gamma}$ be a large constant depending on $p$ and $\gamma$ and its choice will become clear later.  By interpolating \eqref{240111e4_255} with trivial $L^{\infty}$ bounds, we can prove \eqref{240111e4_227jj} for all 
\begin{equation}
k\le (C_{p, \gamma})^8 j_2.
\end{equation}
Therefore it remains to prove \eqref{240111e4_227jj} for all 
\begin{equation}\label{240111e4_235ggg}
k\ge (C_{p, \gamma})^8 j_2.
\end{equation}
By the change of variable $\theta\mapsto 2^{-j_2}\theta$, it is equivalent to prove 
\begin{equation}\label{240111e4_227jjj}
\begin{split}
& \Norm{
\int_{
\R
}
P_k f(x-\theta, y-\gamma_{\bfj}(v_0(x, y), \theta))
\chi_{+}(\theta)
d\theta
}_{L^p(\R^2)}\lesim
2^{j_2-\delta_p j_2
-\delta_p k
}
\norm{f}_{L^p(\R^2)},
\end{split}
\end{equation} 
where 
\begin{equation}
\gamma_{\bfj}(v, \theta):=
2^{\mf{p}_i j_1}
\gamma(
2^{-j_1}v, 2^{-j_2}\theta
).
\end{equation}
We consider the case
\begin{equation}\label{231128e4_153}
j_1\le (C_{p, \gamma})^4 j_2,
\end{equation}
and the case 
\begin{equation}\label{231128e4_154}
j_1\ge (C_{p, \gamma})^4 j_2.
\end{equation}
Start with the case \eqref{231128e4_154}. Let us compute cinematic curvature in this case. Write 
\begin{equation}
\gamma(v, \theta)=v^{\mf{p}_i} h_2(\theta)+ R(v, \theta)+ h_1(v),
\end{equation}
where 
\begin{equation}
R(v, \theta)= O(v^{\mf{p}_i+1}\theta).
\end{equation}
Under this notation, 
\begin{equation}
\gamma_{\bfj}(v, \theta)=
v^{\mf{p}_i} h_2(2^{-j_2}\theta)+ 2^{\mf{p}_i j_1} R(2^{-j_1}v, 2^{-j_2}\theta)
+
2^{
\mf{p}_i j_1
}h_1(2^{-j_1}v)
.
\end{equation}
The cinematic curvature of $\gamma_{\bfj}$ is 
\begin{equation}
\det
\begin{bmatrix}
(\star)_{11}, & 
(\star)_{12}\\
(\star)_{21}, & 
(\star)_{22}
\end{bmatrix}
\end{equation}
where 
\begin{equation}
\begin{split}
& (\star)_{11}:= 2^{-2j_2} v^{
\mf{p}_i
} h''_2(2^{-j_2}\theta)+ 2^{-2j_2}2^{\mf{p}_i j_1} \partial^2_{\theta} R(2^{-j_1}v, 2^{-j_2}\theta),\\
& 
(\star)_{12}:=2^{-3j_2} v^{\mf{p}_i} h'''_2(2^{-j_2}\theta)+ 2^{-3j_2}2^{
\mf{p}_i
j_1} \partial^3_{\theta} R(2^{-j_1}v, 2^{-j_2}\theta),\\ 
& (\star)_{21}:= 
\mf{p}_i v^{\mf{p}_i-1} 2^{-j_2}h'_2(2^{-j_2}\theta)+ 2^{-j_1-j_2} 2^{
\mf{p}_i
j_1} \partial_v \partial_{\theta}R(2^{-j_1}v, 2^{-j_2}\theta),\\
& (\star)_{22}:=
\mf{p}_i v^{\mf{p}_i-1} 2^{-2j_2} h''_2(2^{-j_2}\theta)+ 2^{-j_1-2j_2} 2^{\mf{p}_i j_1} \partial_v \partial^2_{\theta}R(2^{-j_1}v, 2^{-j_2}\theta).
\end{split}
\end{equation}
This is equal to 
\begin{equation}
2^{-4j_2}
\det
\begin{bmatrix}
(\star)'_{11}, & 
(\star)'_{12}\\
(\star)'_{21}, & 
(\star)'_{22}
\end{bmatrix}
\end{equation}
where 
\begin{equation}
\begin{split}
& (\star)'_{11}:= v^{\mf{p}_i} h''_2(2^{-j_2}\theta)+ 2^{\mf{p}_i j_1} \partial^2_{\theta} R(2^{-j_1}v, 2^{-j_2}\theta),\\
& 
(\star)'_{12}:=v^{\mf{p}_i} h'''_2(2^{-j_2}\theta)+ 2^{\mf{p}_i j_1} \partial^3_{\theta} R(2^{-j_1}v, 2^{-j_2}\theta),\\ 
& (\star)'_{21}:= 
\mf{p}_i v^{\mf{p}_i-1} h'_2(2^{-j_2}\theta)+ 2^{-j_1} 2^{\mf{p}_i j_1} \partial_v \partial_{\theta}R(2^{-j_1}v, 2^{-j_2}\theta),\\
& (\star)'_{22}:=
\mf{p}_i v^{\mf{p}_i-1} h''_2(2^{-j_2}\theta)+ 2^{-j_1} 2^{\mf{p}_i j_1} \partial_v \partial^2_{\theta}R(2^{-j_1}v, 2^{-j_2}\theta).
\end{split}
\end{equation}
By the assumption that the cinematic curvature of $v^{\mf{p}_i} h_2(\theta)$ does not vanish constantly and by the relation \eqref{231128e4_154}, we obtain that 
\begin{equation}\label{240107e4_237}
|\cine(\gamma_{\bfj})|\gtrsim 
2^{- (C_{p, \gamma})^2 j_2}.
\end{equation}
The desired estimate \eqref{240111e4_227jjj} now follows from \eqref{240111e4_235ggg} and Claim \ref{231130claim4_4}. This finishes the discussion on the case \eqref{231128e4_154}. 

Let us remark here that it is crucial that the right hand side of \eqref{240107e4_237} does not tend to $0$ as $j_1$ tends to $\infty$ (unlike, say the cinematic curvature bound in \eqref{240107e4_133jj}). The constant on the right hand side of \eqref{240111e4_227jjj} does not involve $j_1$, and therefore Claim \ref{240111claim4_4} can only handle the case $k\lesim_{p, \gamma} j_2$. In other words, we need that the local smoothing estimates in Claim \ref{231130claim4_4} handles all ``large" $j_1$ uniformly, and this is why it is crucial that  the bound in \eqref{240107e4_237} must be independent of $j_1$. \footnote{This also explains why we need to include item (2) in the definition of strongly degenerate functions in Definition \ref{231122defi1_5}.}\\

Next we discuss the case \eqref{231128e4_153}. In this case, because of the constraint \eqref{231128e4_153} and because of the relation \eqref{240111e4_235ggg}, we have that 
\begin{equation}\label{240122e4_248}
k\ge (C_{p, \gamma})^4 \max\{j_1, j_2\}.
\end{equation}
By repeating the whole argument in Subsection \ref{240107subsection4_2}, we will be able to prove \eqref{240111e4_227jjj}. The details are left out.

\subsection{
The dominating polynomial 
\texorpdfstring{$v^{\mf{p}_i} \theta^{\mf{q}_i}$}{}
satisfies 
\texorpdfstring{$\mf{p}_i\ge 1, \mf{q}_i=1$}{}
and 
\texorpdfstring{$i\ge 2$}{}
}\label{240118subsection4_4}

Recall the labelling in \eqref{240108e4_5}. The assumption that $i\ge 2$ implies that the vertex $(\mf{p}_{i-1}, \mf{q}_{i-1})$ is also a vertex in the reduced Newton diagram $\rn_d(\gamma)$. Recall from \eqref{240102e4_24} that we need to prove 
\begin{equation}\label{240108e4_255}
\Norm{
\sup_{|v|\le \epsilon}
\anorm{
\int_{
\R
}
f(x-\theta, y-\gamma(v, \theta)) a_i(v, \theta)d\theta
}
}_{L^p(\R^2)}
\lesim_{
\gamma, p, a
}
\norm{f}_{L^p(\R^2)},
\end{equation}
for all 
\begin{equation}\label{240108e4_224hhh}
p> \max\{
2, \mf{D}_{\gamma}
\}.
\end{equation}
Because of the existence of the vertex $(\mf{p}_{i-1}, \mf{q}_{i-1})$, the equality in \eqref{240108e4_224} may not hold anymore. \\

We consider two subcases.  Let $\mfp_0$ be the smallest $\mfp$ such that $c_{\mfp, 0}\neq 0$. If such $\mfp$ does not exist, that is, $c_{\mfp, 0}=0$ for all $\mfp$, then we write $\mfp_0=\infty$. The first subcase is when $\mf{p}_0\ge \mf{p}_i$, and the second subcase is when $\mf{p}_0< \mf{p}_i$. 

The case distinction here is quite different from previous ones, say the one in Subsection \ref{240107subsection4_2}. The main reason for the differences comes from the two different methods of proving $L^2$ estimates: The method of proving $L^2$ estimates in Subsection \ref{240107subsection4_2} is via Van der Corput's lemma, see for instance \eqref{231130e4_102zz}, while the method we will use in this Subsection is from planar maximal Kakeya inequalities, similar to Claim \ref{240111claim4_4}.

\subsubsection{The first subcase}

We first consider the case $\mf{p}_0\ge \mf{p}_i$.  We will see that the proof of \eqref{240108e4_255} is very similar to the proof in Subsection \ref{240108subsection4_3}, more precisely, the proof of \eqref{240106e4_221} under the assumption \eqref{231128e4_153}.

%
First of all, let us point out that under the assumption  that $\mf{p}_0\ge \mf{p}_i$, we have
\begin{equation}
\max\{2, \mf{D}_{\gamma}\}=2,
\end{equation}
precisely the same as in \eqref{240108e4_224}. In other words,  we will prove the estimate \eqref{240108e4_255} for all $p>2$, which is exactly the same as the range \eqref{240108e4_224} for the estimate \eqref{240106e4_221}. Secondly, as pointed out at the beginning of this section, the vertex $(\mf{p}_{i-1}, \mf{q}_{i-1})$ is also a vertex in the reduced Newton diagram $\mc{R}\mc{N}_d(\gamma)$. Take $(v, \theta)\in \supp(a_i)$, and let $j_1, j_2\in \N$ be such that 
\begin{equation}\label{240122e4_252}
|v|\simeq 2^{-j_1}, \ \ |\theta|\simeq 2^{-j_2},
\end{equation}
then we must have 
\begin{equation}\label{240122e4_253}
2^{
-
\mf{p}_{i-1} j_1-
\mf{q}_{i-1} j_2
}
\le 2^{
-\mf{p}_i j_1-
\mf{q}_i j_2
},
\end{equation}
which further implies that 
\begin{equation}\label{240118e4_253}
j_1
\le 
C_{\gamma} j_2,
\end{equation}
for some constant $C_{\gamma}$ depending only on $\gamma$; the relation \eqref{240118e4_253} is the same as the relation \eqref{231128e4_153}. \footnote{The constant in \eqref{231128e4_153} is allowed to depend on $p$, but this is a minor difference. } 

Now we very briefly sketch how to prove \eqref{240108e4_255} for all $p>2$. Similarly to \eqref{240108e4_242kkk}, it suffices to prove 
\begin{multline}\label{240118e4_254}
\Norm{
\sum_{k\in \Z}
\int_{
\R
}
P_k f(x-\theta, y-\gamma(v(x, y), \theta))
a_i(v(x, y), \theta)
\chi_{j_2}(\theta)
d\theta
}_{L^p(\R^2)}\\
\lesim
2^{-\delta_p j_2}
\norm{f}_{L^p(\R^2)},
\end{multline}
for all $p>2, j_2\in \N$ and some $\delta_p>0$. The low frequency part $k\le \mf{p}_i \mf{j}_1(x, y)$ can be controlled by the strong maximal operator. The high frequency part $k\ge \mf{p}_i \mf{j}_1(x, y)$ can be proven in exactly the same way as how \eqref{240108e4_242kkkk} was proven.

\subsubsection{The second subcase}

We consider the case $\mf{p}_0< \mf{p}_i$. What we need to prove is still \eqref{240118e4_254}, that is, 
\begin{multline}\label{240118e4_256}
\Norm{
\sum_{k\in \Z}
\int_{
\R
}
P_k f(x-\theta, y-\gamma(v(x, y), \theta))
a_i(v(x, y), \theta)
\chi_{j_2}(\theta)
d\theta
}_{L^p(\R^2)}\\
\lesim
2^{-\delta_p j_2}
\norm{f}_{L^p(\R^2)},
\end{multline}
for all $j_2\in \N$, some $\delta_p>0$, and a different range 
\begin{equation}
p> \max\{2, \mf{D}_{\gamma}\}.
\end{equation}
Recall the discussions in \eqref{240122e4_252} and \eqref{240122e4_253}. Recall the notation that 
\begin{equation}
|v(x, y)|\simeq 2^{
-\mf{j}_1(x, y)
}. 
\end{equation}
Let $j_1\in \N$ be a possible value of the function $\mf{j}_1(x, y)$. Then we can without loss of generality assume that \eqref{240122e4_253} always holds, that is, 
\begin{equation}\label{240118e4_259}
2^{
-
\mf{p}_{i-1} j_1-
\mf{q}_{i-1} j_2
}
\le 2^{
-\mf{p}_i j_1-
\mf{q}_i j_2
},
\end{equation}
as otherwise the integral on the left hand side of \eqref{240118e4_256} vanishes. By the triangle inequality applied in $j_1$, to prove \eqref{240118e4_256}, it suffices to prove that 
\begin{equation}\label{240118e4_260}
\begin{split}
& \Norm{
\sum_{k\in \Z}
\int_{
\R
}
P_k f(x-\theta, y-\gamma(v_{j_1}(x, y), \theta))
a_i(v_{j_1}(x, y), \theta)
\chi_{j_2}(\theta)
d\theta
}_{L^p(\R^2)}\\
& \lesim
2^{-\delta_p j_2}
\norm{f}_{L^p(\R^2)},
\end{split}
\end{equation}
for all $j_1, j_2$ satisfying \eqref{240118e4_259}. To simplify our presentation, we without loss of generality assume that 
\begin{equation}
\supp(\chi_{j_2})\subset 
\supp(
a_i(v, \cdot)
),
\end{equation}
for every $|v|\simeq 2^{-j_1}$; this can be achieved by cutting the support of $\chi_{j_2}$ into several smaller pieces. We write \eqref{240118e4_262} in the simpler form
\begin{equation}\label{240118e4_262}
\begin{split}
& \Norm{
\sum_{k\in \Z}
\int_{
\R
}
P_k f(x-\theta, y-\gamma(v_{j_1}(x, y), \theta))
\chi_{j_2}(\theta)
d\theta
}_{L^p(\R^2)} \lesim
2^{-\delta_p j_2}
\norm{f}_{L^p(\R^2)}.
\end{split}
\end{equation}
By the change of variable 
\begin{equation}
\theta\mapsto 2^{-j_2}\theta,
\end{equation}
and renaming the variable $v\to 2^{-j_1}v$, it suffices to prove 
\begin{equation}\label{240118e4_264}
\begin{split}
& \Norm{
\sum_{k\in \Z}
\int_{
\R
}
P_k f(x-\theta, y-\gamma_{\bfj}(v_{0}(x, y), \theta))
\chi_{+}(\theta)
d\theta
}_{L^p(\R^2)} \lesim
2^{j_2-\delta_p j_2}
\norm{f}_{L^p(\R^2)},
\end{split}
\end{equation}
where 
\begin{equation}
\gamma_{\bfj}(v, \theta):=
2^{
\mf{p}_0 j_1
}
\gamma(
2^{-j_1}v, 2^{-j_2}\theta
), \ \ \bfj=(j_1, j_2).
\end{equation}
The low frequency part $k\le 0$ can be controlled by the strong maximal function, and therefore it suffices to prove that 
\begin{equation}\label{240118e4_266}
\begin{split}
& \Norm{
\int_{
\R
}
P_k f(x-\theta, y-\gamma_{\bfj}(v_{0}(x, y), \theta))
\chi_{+}(\theta)
d\theta
}_{L^p(\R^2)} \lesim
2^{j_2-\delta_p j_2
-\delta_p k
}
\norm{f}_{L^p(\R^2)},
\end{split}
\end{equation}
for all $k\in \N$. Here comes a new phenomenon. Note that 
\begin{equation}
\gamma_{\bfj}(v, \theta)=
c_{\mf{p}_0, 0} v^{\mf{p}_0}+ 
c_{
\mf{p}_i, 1
}
2^{
\mf{p}_0 j_1- \mf{p}_i j_1-
j_2
} v^{\mf{p}_i} \theta+ \text{smaller term},
\end{equation}
which suggests that we consider the case
\begin{equation}\label{240118e4_268z}
0\le k\le 
\mf{p}_i j_1+j_2-\mf{p}_0 j_1
\end{equation}
and the case 
\begin{equation}\label{240118e4_269z}
k\ge \mf{p}_i j_1+j_2-\mf{p}_0 j_1
\end{equation}
separately. \\

Consider the former case \eqref{240118e4_268z}. In this case, we apply Sobolev embedding in the $v$ variable, and bound the left hand side of \eqref{240118e4_266} by 
\begin{equation}
2^{\frac{k}{p}} \|f\|_p\lesim 
2^{\frac{\mf{p}_i j_1+j_2-\mf{p}_0 j_1}{p}}\|f\|_p.
\end{equation}
In order for \eqref{240118e4_266} to hold, we know that $p$ must satisfy 
\begin{equation}
\frac{\mf{p}_i j_1+j_2-\mf{p}_0 j_1}{p}\le j_2 \implies p\ge 
(\mf{p}_i-\mf{p}_0)\frac{j_1}{j_2}+1.
\end{equation}
Recall the relation in \eqref{240118e4_259} that 
\begin{equation}
\frac{j_1}{j_2}\le 
\frac{
\mf{q}_{i-1}-\mf{q}_i
}{
\mf{p}_i-\mf{p}_{i-1}
}.
\end{equation}
All these together imply that $p$ must satisfy 
\begin{equation}
p\ge 
(\mf{p}_i-\mf{p}_0)\frac{
\mf{q}_{i-1}-\mf{q}_i
}{
\mf{p}_i-\mf{p}_{i-1}
}
+1,
\end{equation}
which is exactly the vertical distance between $(\mf{p}_0, 0)$ and the line connecting the vertices $(\mf{p}_{i-1}, \mf{q}_{i-1})$ and $(\mf{p}_{i}, \mf{q}_{i})$. This proves \eqref{240118e4_266} for all 
\begin{equation}\label{240125e4_276}
p\ge \max\{2, \mf{D}_{\gamma}\},
\end{equation}
with $\delta_p=0$. Moreover, by picking $p$ to be strictly bigger than $\max\{2, \mf{D}_{\gamma}\}$, it is elementary to see that  \eqref{240118e4_266} holds for some $\delta_p>0$.\\

It remains to handle the latter case \eqref{240118e4_269z}. 
Note that 
\begin{equation}\label{240118e4_268}
\anorm{
\det
\begin{bmatrix}
\partial_y \gamma_{\bfj}, & \partial_v \gamma_{\bfj}\\
\partial_y\partial_{\theta} \gamma_{\bfj}, & \partial_v \partial_{\theta} \gamma_{\bfj}
\end{bmatrix}
}
\simeq 2^{\mf{p}_0 j_1- \mf{p}_i j_1-
j_2},
\end{equation}
which may get very small if $j_1, j_2$ are large, and therefore we are not able to use Zahl's result \cite{Zah23} directly. To prove \eqref{240118e4_266}, observe that $|P_k f|$ is essentially constant in the vertical direction at the scale $2^{-k}$, and therefore 
\begin{equation}\label{240122e4_276}
\begin{split}
& \Norm{
\int_{
\R
}
P_k f(x-\theta, y-\gamma_{\bfj}(v_{0}(x, y), \theta))
\chi_{+}(\theta)
d\theta
}_{L^p(\R^2)}\\
& 
\lesim
\Norm{
\sup_{w\in \N, w\simeq 2^k}
\anorm{
\int_{\R}
|
P_k f(x-\theta, 
y-\gamma_{\bfj}(w2^{-k}, \theta))\chi_+(\theta)
|d\theta
}
}_{L^p(\R^2)}.
\end{split}
\end{equation}
Recall that 
\begin{equation}
\gamma_{\bfj}(v, \theta)=
c_{\mf{p}_0, 0} v^{\mf{p}_0}+ 
c_{
\mf{p}_i, 1
}
2^{
\mf{p}_0 j_1- \mf{p}_i j_1-
j_2
} v^{\mf{p}_i} \theta+ \text{smaller term}.
\end{equation}
Because of the small factor $2^{
\mf{p}_0 j_1- \mf{p}_i j_1-
j_2
}$, for two $w_1, w_2\in \N$, the slopes between the two lines 
\begin{equation}\label{240122e4_278}
\pnorm{\theta, 2^{
\mf{p}_0 j_1- \mf{p}_i j_1-
j_2
} (w_1 2^{-k})^{\mf{p}_i} \theta}
\text{ and }
\pnorm{\theta, 
2^{
\mf{p}_0 j_1- \mf{p}_i j_1-
j_2
} (w_2 2^{-k})^{\mf{p}_i} \theta}
\end{equation}
can be as small as 
\begin{equation}
2^{
\mf{p}_0 j_1- \mf{p}_i j_1-
j_2
}
|w_1-w_2|2^{-k},
\end{equation}
which is much smaller compared with $|w_1-w_2|2^{-k}$. The idea is to cut the sup in \eqref{240122e4_276} into smaller groups. More precisely, for 
\begin{equation}
A=1, 2, \dots, 2^{\mf{p}_i j_1+j_2-\mf{p}_0 j_1},
\end{equation}
define 
\begin{equation}
W_A:=\{w: w\in \N, w\simeq 2^k, w\equiv A\ (\mathrm{mod}\ 2^{\mf{p}_i j_1+j_2-\mf{p}_0 j_1})\}.
\end{equation}
We bound \eqref{240122e4_276} by 
\begin{equation}
\Norm{
\pnorm{
\sum_{A=1}^{
2^{\mf{p}_i j_1+j_2-\mf{p}_0 j_1}
}
\anorm{
\sup_{w\in W_A}
\int_{\R}
|
P_k f(x-\theta, 
y-\gamma_{\bfj}(w2^{-k}, \theta))\chi_+(\theta)
|d\theta
}^p
}^{\frac{1}{p}}
}_{L^p(\R^2)}.
\end{equation}
For each fixed $A$, we have 
\begin{equation}\label{240122e4_283}
\Norm{
\sup_{w\in W_A}
\int_{\R}
|
P_k f(x-\theta, 
y-\gamma_{\bfj}(w2^{-k}, \theta))\chi_+(\theta)
|d\theta
}_{L^2(\R^2)}
\lesim_{\delta}
2^{\delta k}\norm{f}_{L^2(\R^2)},
\end{equation}
for every $\delta>0$, by a standard duality argument and by expanding the $L^2$ norm; one can also apply Zahl \cite{Zah23} directly. We interpolate \eqref{240122e4_283} with a trivial $L^{\infty}$ bound, and can further bound \eqref{240122e4_276} by 
\begin{equation}
2^{
\frac{
\mf{p}_i j_1+j_2-\mf{p}_0 j_1
}{p}
} 2^{\delta k}\norm{f}_{L^p(\R^2)}, \ \ p\ge 2. 
\end{equation}
This finishes the proof of the desired estimate \eqref{240118e4_266} if $k\le (C_{p, \gamma})^8  j_2$, where $C_{p, \gamma}$ is a large constant depending on $p$ and $\gamma$. For the case $k\ge (C_{p, \gamma})^8  j_2$, we first note that 
\begin{equation}
k\ge (C_{p, \gamma})^4  \max\{j_1, j_2\},
\end{equation}
and we are in exactly the same situation as in \eqref{240122e4_248}. The details are left out. \\

So far we have finished all the cases \eqref{240107e4_26}, \eqref{240107e4_27}, \eqref{240107e4_28} and \eqref{240108e4_29}. In the next section we will handle the contributions from the regions $\mf{O}'_{i}$.

\section{Sharp \texorpdfstring{$L^p$}{} bounds: Edges-dominating}\label{240220section6}

Let us first remind ourselves what we mean by $\mf{O}'_{i}$. Recall that we use $\mf{E}_i$ to denote the edge of the reduced Newton diagram that connects the vertices $(\mf{p}_i, \mf{q}_i)$ and $(\mf{p}_{i+1}, \mf{q}_{i+1})$. Moreover,  
\begin{equation}
\gamma_{\mf{E}_i}(v, \theta)=
\sum_{
(\mfp, \mfq)\in \mf{E}_i
}
c_{\mfp, \mfq} v^{\mfp} \theta^{\mfq}.
\end{equation}
Consider the curve $\theta= rv^{\mf{m}_i}$, where 
\begin{equation}
\mf{m}_{i}=
\frac{
\mf{p}_{i+1}-\mf{p}_i
}{
\mf{q}_{i}-\mf{q}_{i+1}
}.
\end{equation}
and 
\begin{equation}\label{240123e4_290zzz}
r\in [
(C_{\gamma})^{-1}, C_{\gamma}
].
\end{equation}
Here $C_{\gamma}$ is a large constant depending only on $\gamma$. The reason of considering the region \eqref{240123e4_290zzz} is that outside this region, the function $\gamma_{\mf{E}_i}(v, \theta)$ is very ``small".  On this curve, we have 
\begin{equation}
\gamma_{
\mf{E}_i
}(v, \theta)=
v^{\mf{e}_i}
\sum_{(\mfp, \mfq)\in \mf{E}_i}
c_{\mfp, \mfq}
r^{\mfq}:= 
v^{\mf{e}_i} \gamma_{\mf{E}_i}(r),
\end{equation}
where 
\begin{equation}\label{240123e4_292zzz}
\mf{e}_i= \mf{p}+\mf{q}\mf{m}_i,
\end{equation}
and in \eqref{240123e4_292zzz}, the point $(\mf{p}, \mf{q})$ can be taken to be an arbitrary point on $\mf{E}_i$.\\

We will prove that 
\begin{equation}
    \Norm{
\sup_{v\in \R}
\anorm{
\int_{
\mf{O}'_i(v)
}
f(x-\theta, y-\gamma(v, \theta)) a(v, \theta)d\theta
}
}_{L^p(\R^2)}
\lesim_{
\gamma, p, a
}
\norm{f}_{L^p(\R^2)},
\end{equation}
for all 
\begin{equation}
    p> \max\{
    2, \mf{D}_{\gamma}
    \},
\end{equation}
where 
\begin{equation}
    \mf{O}'_i(v):=
    \{
    \theta: (v, \theta)\in \mf{O}'_i
    \}.
\end{equation}
For each $|v|\le \epsilon$, let
\begin{equation}
a'_i(v, \cdot): \R\to \R
\end{equation}
 be a non-negative smooth bump function supported on $(1+c_{\gamma})\mf{O}'_i(v)$, and equal to one on $\mf{O}'_i(v)$, where $c_{\gamma}>0$ is a small constant depending only on $\gamma$. Under the new notation, we need to prove 
 \begin{equation}\label{240219e6_9mmm}
    \Norm{
\sup_{v\in \R}
\anorm{
\int_{
\R
}
f(x-\theta, y-\gamma(v, \theta)) a'_i(v, \theta)d\theta
}
}_{L^p(\R^2)}
\lesim_{
\gamma, p, a
}
\norm{f}_{L^p(\R^2)}.
\end{equation}
We consider two cases separately. The first case is when 
\begin{equation}\label{240219e6_6}
    \#\{
    (\mf{p}, \mf{q})\in \mf{E}_i\cap \mc{T}(\gamma): \mf{q}\ge 2
    \}=1,
\end{equation}
and the second case is when 
\begin{equation}\label{240219e6_7}
    \#\{
    (\mf{p}, \mf{q})\in \mf{E}_i\cap \mc{T}(\gamma): \mf{q}\ge 2
    \}\ge 2.
\end{equation}
Here $\mc{T}(\gamma)$ means the Taylor support of $\gamma$. 
Let us first consider the case \eqref{240219e6_6}, as it is much easier compared with the other case. \\

Assume \eqref{240219e6_6}. What makes this case easy is that the second order derivative of $\gamma$ in the $\theta$ variable has a simple structure. Recall that the two endpoints of the edge $\mf{E}_i$ are
\begin{equation}
    (
    \mf{p}_i, \mf{q}_i
    ), \ \ (
    \mf{p}_{i+1}, \mf{q}_{i+1}
    ),
\end{equation}
with 
\begin{equation}
    \mf{q}_i> \mf{q}_{i+1}.
\end{equation}
Note that we always have 
\begin{equation}
\mf{q}_{i+1}\ge 1, \ 
    \mf{q}_i\ge 2,
\end{equation}
and therefore under the assumption \eqref{240219e6_6}, the only vertex that belongs to the set in \eqref{240219e6_6} is $(
    \mf{p}_i, \mf{q}_i
    )$. As a consequence, we obtain 
\begin{equation}\label{240219e6_15}
    |
    \partial_{\theta}^2 \gamma(v, \theta)
    |\simeq 
    |
    \partial_{\theta}^2 \gamma_{
    \mf{E}_i
    }(v, \theta)
    |
    \simeq 
    |v|^{
    \mf{p}_i
    }
    |\theta|^{
    \mf{q}_i-2
    },
\end{equation}
for every 
\begin{equation}
    (v, \theta)\in \supp(
    a'_i
    ).
\end{equation}
This favorable bound on the second derivative in $\theta$, combined with local smoothing estimates, will lead to the desired estimate \eqref{240219e6_9mmm}. We will not write down the details here. One reason is that similar calculations have appeared multiple times before. A more important reason is that the second case \eqref{240219e6_7} will be strictly more difficult, and we will add more details when we discuss that case. \\

From now on we assume \eqref{240219e6_7}. In this case, the second derivative $\partial^2_{\theta}\gamma$ has a much more complicated behavior than \eqref{240219e6_15}. More precisely, $\partial^2_{\theta}\gamma_{
\mf{E}_i
}$ contains at least two competing monomials. Denote 
\begin{equation}
    \kappa_{
    \mf{E}_i
    }(r):= 
    \sum_{
    (\mf{p}, \mf{q})\in \mf{E}_i, \mf{q}\ge 2
    } c_{
    \mf{p}, \mf{q}
    } \mf{q}(\mf{q}-1) r^{
    \mf{q}-2
    },
\end{equation}
which, similarly to the function $\gamma_{\mf{E}_i}(r)$, is obtained by restricting the function $\partial^2_{\theta}\gamma_{
\mf{E}_i
}$ to the curve
\begin{equation}
    \theta= rv^{
    \mf{m}_i
    }.
\end{equation}
Let $\{\mf{r}_{i, j}\}_{j=1}^{J_i}$ be the collection of all the distinct non-zero roots of the polynomial $\kappa_{
    \mf{E}_i
    }(r)$. 
Denote 
\begin{equation}
    \mf{O}'_{i, j}:=
    \{
    (v, r v^{\mf{m}_i})\in \mf{O}'_i: |v|\le \epsilon,  r\in [(C_{\gamma})^{-1}, C_{\gamma}], |r-\mf{r}_{i, j}|\le (C_{\gamma})^{-2}
    \},
\end{equation}
where $C_{\gamma}$ is the same as in \eqref{240123e4_290zzz}. Moreover, denote 
\begin{equation}\label{240219e6_20}
    \mf{O}'_{
    i, \mathrm{bad}
    }:=
    \bigcup_{j=1}^{J_i}
    \mf{O}'_{i, j}, 
\end{equation}
and 
\begin{equation}\label{240219e6_21}
    \mf{O}'_{i, \mathrm{good}}:=
    \mf{O}'_{i}\setminus 
    \mf{O}'_{i, 
    \mathrm{bad}}.
\end{equation}
The contribution from $\mf{O}'_{i, \mathrm{good}}$ and the contribution from $\mf{O}'_{i, \mathrm{bad}}$ will be discussed in the following two subsections separately.

\subsection{The good region}

In this subsection we handle the contribution from the good region $\mf{O}'_{
i, \mathrm{good}
}$, as defined in \eqref{240219e6_21}. We will prove that 
\begin{equation}
    \Norm{
\sup_{v\in \R}
\anorm{
\int_{
\mf{O}'_{
i, \mathrm{good}
}(v)
}
f(x-\theta, y-\gamma(v, \theta)) a(v, \theta)d\theta
}
}_{L^p(\R^2)}
\lesim_{
\gamma, p, a
}
\norm{f}_{L^p(\R^2)},
\end{equation}
for all 
\begin{equation}
    p> \max\{
    2, \mf{D}_{\gamma}
    \},
\end{equation}
where 
\begin{equation}
    \mf{O}'_{
    i, \mathrm{good}
    }(v):=
    \{
    \theta: (v, \theta)\in \mf{O}'_{
    i, \mathrm{good}
    }
    \}.
\end{equation}
For each $|v|\le \epsilon$, let 
\begin{equation}
    a'_{
i, \mathrm{good}
}(v, \cdot): \R\to \R
\end{equation}
 be a non-negative smooth bump function supported on a  sufficiently small neighborhood of $\mf{O}'_{
 i, \mathrm{good}
 }(v)$, and equal to one on $\mf{O}'_{
 i, \mathrm{good}
 }(v)$. Under the new notation, we need to prove 
 \begin{equation}\label{240219e6_23}
    \Norm{
\sup_{|v|\le \epsilon}
\anorm{
\int_{
\R
}
f(x-\theta, y-\gamma(v, \theta)) a'_{
i, \mathrm{good}
}(v, \theta)d\theta
}
}_{L^p(\R^2)}
\lesim_{
\gamma, p, a
}
\norm{f}_{L^p(\R^2)}.
\end{equation}
What makes this case good is that 
\begin{equation}
    |\partial^2_{\theta} \gamma(v, \theta)|\simeq 
    |
    \partial^2_{\theta}\gamma_{
    \mf{E}_i
    }(v, \theta)
    |\simeq 
    |v|^{\mf{p}} |\theta|^{\mf{q}-2},
\end{equation}
for all 
\begin{equation}
    (v, \theta)\in \supp(
    a'_{i, \mathrm{good}}
    ),
\end{equation}
and all 
\begin{equation}
    (
    \mf{p}, \mf{q}
    )\in \mf{E}_i, \ \mf{q}\ge 2. 
\end{equation}
This is very similar to \eqref{240219e6_15}. \\

Let $\mf{p}_0$ be the smallest $\mf{p}$ such that 
\begin{equation}
    c_{\mf{p}, 0}\neq 0.
\end{equation}
If such $\mf{p}$ does not exist, that is, $c_{\mf{p}, 0}=0$ for every $\mf{p}$, then we write $\mf{p}_0=\infty$. 
Let $\mf{L}_{\mf{E}_i}$ be the line that contains the edge $\mf{E}_i$. Write 
\begin{equation}\label{240220e6_32}
    \mf{L}_{\mf{E}_i}=
    \{
    (x, y)\in \R^2: 
    a_{
    \mf{E}_i
    } x+ 
    b_{
    \mf{E}_i
    } y = c_{
    \mf{E}_i
    } 
    \}
\end{equation}
where 
\begin{equation}
    a_{
    \mf{E}_i
    }>0, b_{
    \mf{E}_i
    }>0, c_{
    \mf{E}_i
    }>0.
\end{equation}
We consider two subcases separately. The first subcase is when the vertex $(\mf{p}_0, 0)$ does not lie to the lower left of $\mf{L}_{\mf{E}_i}$, that is,
\begin{equation}
    a_{
    \mf{E}_i
    }
    \mf{p}_0\ge c_{
    \mf{E}_i
    }.
\end{equation}
The second subcase is when the vertex $(\mf{p}_0, 0)$ lies to the lower left of $\mf{L}_{\mf{E}_i}$.

\subsubsection{The first subcase}

Recall that we need to prove \eqref{240219e6_23}. In this case, we will actually prove something stronger: 
\begin{equation}\label{240219e6_34}
     \Norm{
\sup_{|v|\le \epsilon}
\anorm{
\int_{
\R
}
f(x-\theta, y-\gamma(v, \theta)) a'_{
i, \mathrm{good}
}(v, \theta)d\theta
}
}_{L^p(\R^2)}
\lesim_{
\gamma, p, a
}
\norm{f}_{L^p(\R^2)},
\end{equation}
for all $p>2$. Let $\chi_+: \R\to \R$ be a non-negative smooth bump function supported on $(1, 3)$. Denote 
\begin{equation}
    \chi_{j_1}(v):=\chi_+(2^{j_1}v), \ \ 
    \chi_{j_2}(\theta):=\chi_+(2^{j_2}\theta). 
\end{equation}
Note that on the support of $a'_{
i, \mathrm{good}
}$, we always have 
\begin{equation}
    |\theta|\simeq |v|^{
    \mf{m}_i
    }.
\end{equation}
In other words, if 
\begin{equation}
    |\theta|\simeq 2^{-j_2},
\end{equation}
then 
\begin{equation}
    |v|\simeq 2^{-j_1}, \ \ j_1=
    j_2/\mf{m}_i.
\end{equation}
Denote 
\begin{equation}
    \chi_{\bfj, \mathrm{good}}(v, \theta):=
    \chi_{j_1}(v)\chi_{j_2}(\theta) a'_{
    i, \mathrm{good}
    }(v, \theta),
\end{equation}
where 
\begin{equation}
    \bfj=(j_1, j_2), \ \ j_1=j_2/\mf{m}_i.
\end{equation}
By the triangle inequality, to prove \eqref{240219e6_34}, it suffices to prove that 
\begin{equation}
        \Norm{
\sup_{v\in \R}
\anorm{
\int_{
\R
}
f(x-\theta, y-\gamma(v, \theta)) 
\chi_{\bfj, \mathrm{good}}(v, \theta)d\theta
}
}_{L^p(\R^2)}
\lesim
2^{-\delta_p j_2}
\norm{f}_{L^p(\R^2)},
\end{equation}
for some $\delta_p>0$. We apply the change of variable 
\begin{equation}\label{240219e6_42}
    \theta\to 2^{-j_2}\theta
\end{equation}
and rename the parameter 
\begin{equation}\label{240219e6_43}
    v\to 2^{-j_1}v,
\end{equation}
and need to prove 
\begin{equation}\label{240219e6_44}
        \Norm{
\sup_{v\in \R}
\anorm{
\int_{
\R
}
f(x-\theta, y-\gamma_{\bfj}(v, \theta)) 
\chi_{+, \mathrm{good}}(v, \theta)d\theta
}
}_{L^p(\R^2)}
\lesim
2^{j_2-\delta_p j_2}
\norm{f}_{L^p(\R^2)},
\end{equation}
where 
\begin{equation}
    \gamma_{\bfj}(v, \theta):=
    2^{
    \mf{p}_i j_1+\mf{q}_i j_2
    }
    \gamma(
    2^{-j_1}v, 2^{-j_2}\theta
    ),
\end{equation}
and 
\begin{equation}
    \chi_{
    +, \mathrm{good}
    }(v, \theta):=\chi_+(v)\chi_+(\theta) a'_{
    i, \mathrm{good}
    }(2^{-j_1}v, 2^{-j_2}\theta).
\end{equation}
On the support of $\chi_{
    +, \mathrm{good}
    }$, we know that 
\begin{equation}
    |\partial^2_{\theta}\gamma_{\bfj}|\simeq 1.
\end{equation}
This bound, combined with the fact that the cinematic curvature of $\gamma$ does not vanish constantly, will lead to the desired bound \eqref{240219e6_44}. We just need to apply Theorem \ref{231120theorem3_3}, the theorem of resolutions of singularities, to the cinematic curvature of $\gamma$, and follow precisely the steps in 
Subsection \ref{240107subsection4_2}. We leave out the details.

\subsubsection{The second subcase}

We consider the case when the vertex $(
\mf{p}_0, 0
)$ lies to the lower left of $\mf{L}_{
\mf{E}_i
}$, that is, 
\begin{equation}
    a_{
    \mf{E}_i
    } \mf{p}_0< c_{\mf{E}_i}.
\end{equation}
Here we are still using the notation in \eqref{240220e6_32}. We will prove that 
\begin{equation}
       \Norm{
\sup_{|v|\le \epsilon}
\anorm{
\int_{
\R
}
f(x-\theta, y-\gamma(v, \theta)) a'_{
i, \mathrm{good}
}(v, \theta)d\theta
}
}_{L^p(\R^2)}
\lesim_{
\gamma, p, a
}
\norm{f}_{L^p(\R^2)},
\end{equation}
for all 
\begin{equation}\label{240220e6_51}
    p> \max\{2, \mf{D}_{\gamma}\}.
\end{equation}
Indeed, we will see 
\begin{equation}
    p> \max\{
    2, \mf{d}(
    \mf{L}_{\mf{E}_i}, 
    (
    \mf{p}_0, 0
    )
    )
    \}
\end{equation}
suffices. The key difference between this subcase and the first subcase is that, after the change of variable \eqref{240219e6_42} and renaming the parameter \eqref{240219e6_43}, the new function $\gamma_{\bfj}(v, \theta)$ contains monomials with large coefficients. More precisely, 
\begin{equation}
    \gamma_{\bfj}(v, \theta)=
    2^{
    \mf{p}_i j_1+\mf{q}_i j_2
    }
    h_1(
    2^{-j_1}v
    )
    + 
    2^{\mf{p}_i j_1+\mf{q}_i j_2}
    \gamma_{\mf{r}\mf{e}}(
    2^{-j_1}v, 2^{-j_2}\theta
    ),
\end{equation}
where 
\begin{equation}
    h_1(v):=\sum_{
    \mf{p}
    } c_{
    \mf{p}, 0
    } v^{\mf{p}}
\end{equation}
and 
\begin{equation}
    \gamma_{\mf{r}\mf{e}}(v, \theta):= \gamma(v, \theta)-h_1(v).
\end{equation}
In particular, $\gamma_{\bfj}(v, \theta)$ contains the term 
\begin{equation}
    c_{
    \mf{p}_0, 0
    }
    2^{
    -\mf{p}_0 j_1+
    \mf{p}_i j_1+\mf{q}_i j_2
    }
    v^{
    \mf{p}_0
    },
\end{equation}
which has a very large coefficient when $j_1$ and $j_2$ are large. It is exactly this term that gives rise to the exponent in \eqref{240220e6_51}.

\subsection{The bad region}\label{240220subsection6_2}

In this subsection we will take care of the contributions from the bad region \eqref{240219e6_20}. More precisely, we will prove that 
\begin{equation}\label{240219e6_23kkk}
    \Norm{
\sup_{|v|\le \epsilon}
\anorm{
\int_{
\R
}
f(x-\theta, y-\gamma(v, \theta)) a'_{
i, j
}(v, \theta)d\theta
}
}_{L^p(\R^2)}
\lesim_{
\gamma, p, a
}
\norm{f}_{L^p(\R^2)},
\end{equation} 
for all $j$, all 
\begin{equation}
    p> \max\{
    2, \mf{D}_{\gamma}
    \},
\end{equation}
where for each $|v|\le \epsilon$, we define 
\begin{equation}
    \mf{O}'_{
    i, j
    }(v):=
    \{
    \theta: (v, \theta)\in \mf{O}'_{
    i, j
    }
    \},
\end{equation}
and 
\begin{equation}
    a'_{
i, j
}(v, \cdot): \R\to \R
\end{equation}
 is a non-negative smooth bump function supported on a small neighborhood of $\mf{O}'_{
 i, j
 }(v)$, and equal to one on $\mf{O}'_{
 i, j
 }(v)$. 
We apply the change of variable 
\begin{equation}
\theta\mapsto \theta+ \mf{r}_{i, j} v^{\mf{m}_i},
\end{equation}
and write \eqref{240219e6_23kkk} as 
\begin{equation}\label{240123e5_7}
\Norm{
\sup_{v\in \R}
\anorm{
\int_{
\R
}
f(x-\theta-\mf{r}_{i, j} v^{\mf{m}_i}, y-\gamma_{\star}(v, \theta)) a_{\star}(v, \theta)d\theta
}
}_{L^p(\R^2)}
\lesim_{
\gamma, p, a
}
\norm{f}_{L^p(\R^2)},
\end{equation}
where 
\begin{equation}
\gamma_{\star}(v, \theta):=
\gamma(
v, \theta+ \mf{r}_{i, j} v^{\mf{m}_i}
)
\end{equation}
and 
\begin{equation}
a_{\star}(v, \theta):=a'_{i, j}(
v, 
\theta+ \mf{r}_{i, j} v^{\mf{m}_i}
).
\end{equation}
Note that on the support of $a_{\star}$, we always have 
\begin{equation}\label{240124e5_16}
|\theta|\le c_{\gamma} |v|^{\mf{m}_i},
\end{equation}
where $c_{\gamma}$ is a small constant depending on $\gamma$. The new function $\gamma_{\star}$ may not be analytic in $v, \theta$ anymore, but there exists a positive integer $M$ such that it is analytic in $\theta$ and $v^{\frac{1}{M}}$.

To simplify the argument, we consider the case $\mf{m}_i=M=1$. Indeed, the general case can be reduced to the case $\mf{m}_i=M=1$ by renaming the parameter $v$ to $v^M$, and this does not change the sup in \eqref{240123e5_7}. Moreover, renaming the $v$ variable does not change vertical Newton distances either because $v$ is always treated as a horizontal variable. Under this simplification, the new function 
\begin{equation}\label{240124e5_17}
    \gamma_{\star}(v, \theta)=
    \gamma(v, 
    \theta+\mf{r}_{i, j}v
    )
\end{equation}
is an analytic function in $v$ and $\theta$. Recall that $(\mf{p}_i, \mf{q}_i)$ is a vertex in the reduced Newton diagram $\mc{R}\mc{N}_d(\gamma)$. Moreover, as the edge $\mf{E}_i$ contains another vertex $(
\mf{p}_{i+1}, \mf{q}_{i+1}
)$, we therefore have 
\begin{equation}
    q_{i}> q_{i+1}\ge 1.
\end{equation}

\begin{lemma}\label{240124lemma5_1}
Let $\gamma_{\star}$ be given by \eqref{240124e5_17}.
\begin{enumerate}
\item[(1)]
The point $(\mf{p}_i, \mf{q}_i)$ is a vertex of the reduced Newton diagram $\mc{R}\mc{N}_d(
    \gamma_{\star})$. 
    \item[(2)] The point $(\mf{p}_i, \mf{q}_i-2)$ is a vertex of the Newton diagram $\mc{N}_d(
    \partial^2_{\theta}
    \gamma_{\star})$. 
\end{enumerate} 
\end{lemma}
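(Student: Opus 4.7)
My plan is to expand $\gamma_{\star}(v,\theta)=\gamma(v,\theta+\mf{r}_{i,j}v)$ via the binomial theorem and read off coefficients directly. Letting $c^{\star}_{\mf{p}',\mf{q}'}$ denote the coefficient of $v^{\mf{p}'}\theta^{\mf{q}'}$ in $\gamma_{\star}$, a direct computation gives
\begin{equation}
c^{\star}_{\mf{p}',\mf{q}'}=\sum_{\substack{(\mf{p},\mf{q})\in \mc{T}(\gamma)\\ \mf{p}+\mf{q}=\mf{p}'+\mf{q}',\ \mf{q}\ge \mf{q}'}} c_{\mf{p},\mf{q}}\binom{\mf{q}}{\mf{q}'}\mf{r}_{i,j}^{\mf{q}-\mf{q}'}.
\end{equation}
The key structural input, following from the reduction $\mf{m}_i=1$, is that the edge $\mf{E}_i$ lies on the line $\{\mf{p}+\mf{q}=\mf{p}_i+\mf{q}_i\}$, which is a supporting line of the Newton polyhedron $\mc{N}(\gamma)$. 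Hence every $(\mf{p},\mf{q})\in \mc{T}(\gamma)$ satisfies $\mf{p}+\mf{q}\ge \mf{p}_i+\mf{q}_i$, with equality forcing $(\mf{p},\mf{q})\in \mf{E}_i$.

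For part (1) I would apply the displayed formula at $(\mf{p}',\mf{q}')=(\mf{p}_i,\mf{q}_i)$: the indexing set reduces to $(\mf{p},\mf{q})\in \mf{E}_i\cap \mc{T}(\gamma)$ with $\mf{q}\ge \mf{q}_i$, but since $\mf{q}_i$ is the largest $\mf{q}$-value attained on $\mf{E}_i$ (using $\mf{q}_i>\mf{q}_{i+1}$), only the term $(\mf{p},\mf{q})=(\mf{p}_i,\mf{q}_i)$ survives, yielding $c^{\star}_{\mf{p}_i,\mf{q}_i}=c_{\mf{p}_i,\mf{q}_i}\neq 0$. The same formula shows $\mc{T}(\gamma_{\star})\subset \{\mf{p}+\mf{q}\ge \mf{p}_i+\mf{q}_i\}$, and that on the boundary line $\{\mf{p}+\mf{q}=\mf{p}_i+\mf{q}_i\}$ any point $(\mf{p}',\mf{q}')\in \mc{T}(\gamma_{\star})$ has $\mf{q}'\le \mf{q}_i$ (the constraint $\mf{q}\ge \mf{q}'$ in the sum combined with $\mf{q}\le \mf{q}_i$ on $\mf{E}_i$ forces this). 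Consequently $(\mf{p}_i,\mf{q}_i)$ is the extreme point of $\mc{N}(\gamma_{\star})$ with largest $\mf{q}$-coordinate on this supporting line, and since $\mf{q}_i\ge 2\ge 1$ it qualifies as a vertex of the reduced Newton diagram $\mc{R}\mc{N}_d(\gamma_{\star})$.

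For part (2), the Taylor support of $\partial^2_{\theta}\gamma_{\star}$ is the $(0,-2)$-shift of $\{(\mf{p}',\mf{q}')\in \mc{T}(\gamma_{\star}):\mf{q}'\ge 2\}$, and the coefficient at $(\mf{p}_i,\mf{q}_i-2)$ equals $c_{\mf{p}_i,\mf{q}_i}\mf{q}_i(\mf{q}_i-1)$, which is nonzero by $\mf{q}_i\ge 2$ together with part (1). The vertex property for $\mc{N}_d(\partial^2_{\theta}\gamma_{\star})$ then follows from part (1) by this translation: the line $\{\mf{p}+\mf{q}=\mf{p}_i+\mf{q}_i-2\}$ supports the image polyhedron and $(\mf{p}_i,\mf{q}_i-2)$ is its unique extreme point with largest $\mf{q}$-coordinate on that line.

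The argument is essentially bookkeeping with binomial coefficients; the only real care needed is distinguishing the reduced Newton diagram (requiring $\mf{q}\ge 1$) from the full one, and verifying extremality along the supporting line rather than mere incidence with it. No serious obstacle is anticipated, because the structural fact that makes everything go through, namely that $\mf{E}_i$ lies on a supporting line of slope $-1$, survives intact under the shear $\theta\mapsto \theta+\mf{r}_{i,j}v$.
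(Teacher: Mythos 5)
Your overall approach is the same as the paper's (Lemma \ref{240124lemma5_1} together with Lemma \ref{240124lemma5_2}): compute the coefficient at $(\mf{p}_i,\mf{q}_i)$ via the binomial expansion, observe it is unchanged, and then use the supporting-line property of the line through $\mf{E}_i$ to conclude extremality. You have fleshed out the details the paper leaves implicit, and the skeleton is correct.

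There is, however, a recurring imprecision that you should fix. You state that $\{\mf{p}+\mf{q}=\mf{p}_i+\mf{q}_i\}$ supports the full Newton polyhedron $\mc{N}(\gamma)$, and that $\mc{T}(\gamma_{\star})\subset\{\mf{p}+\mf{q}\ge\mf{p}_i+\mf{q}_i\}$. Both statements are false in general, because $\mf{E}_i$ is defined as a compact edge of the \emph{reduced} Newton diagram $\mc{R}\mc{N}_d(\gamma)$, and the pure-$v$ part $h_1(v)=\sum_{\mf{p}}c_{\mf{p},0}v^{\mf{p}}$ is invisible to $\mc{R}\mc{N}$ but contributes to $\mc{N}$; a term $c_{\mf{p}_0,0}v^{\mf{p}_0}$ with $\mf{p}_0<\mf{p}_i+\mf{q}_i$ lies strictly to the lower left of your line. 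Moreover, $h_1$ is fixed by the shear $\theta\mapsto\theta+\mf{r}_{i,j}v$, so this obstruction survives into $\gamma_{\star}$. The correct statements are that the line supports $\mc{R}\mc{N}(\gamma)$, and that the \emph{reduced} Taylor support of $\gamma_{\star}$ (i.e., $\{(\mf{p}',\mf{q}')\in\mc{T}(\gamma_{\star}):\mf{q}'\ge 1\}$) lies in the closed half-plane $\{\mf{p}+\mf{q}\ge\mf{p}_i+\mf{q}_i\}$. Fortunately your coefficient formula enforces $\mf{q}\ge\mf{q}'$, so whenever you apply the supporting-line fact the relevant $\mf{q}$ is at least $1$ (indeed at least $2$ in part (1)), and the reduced version suffices. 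With the word ``reduced'' inserted in the right places, the argument is complete; without it, the intermediate claims as written are false and would invite a counterexample. The same remark applies in part (2): the relevant monomials have $\mf{q}'\ge 2$ in $\gamma_{\star}$, which again places you squarely in the reduced picture, and the corresponding statement for $\mc{N}_d(\partial^2_{\theta}\gamma_{\star})$ then follows by the $(0,-2)$-translation as you describe.
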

\begin{proof}[Proof of Lemma \ref{240124lemma5_1}]
Recall that 
\begin{equation}
    \gamma_{
    \mf{E}_i
    }(v, \theta)=
    \sum_{
    (\mf{p}, \mf{q})\in \mf{E}_i
    }
    c_{\mf{p}, \mf{q}} v^{\mf{p}}\theta^{\mf{q}}.
\end{equation}
For all the vertices $(\mf{p}, \mf{q})\in \mf{E}_i$, we always have $\mf{q}\le \mf{q}_i$. When applying the change of variables 
\begin{equation}
    \theta\to \theta+\mf{r}_{i, j}v,
\end{equation}
the monomial $v^{\mf{p}_i}\theta^{\mf{q}_i}$ will never be cancelled, and therefore the point $(\mf{p}_i, \mf{q}_i)$ is  still a vertex of the reduced Newton diagram $\mc{R}\mc{N}_d(
    \gamma_{\star})$. This finishes the proof of item (1). The proof of item (2) is the same. 
\end{proof}

\begin{lemma}\label{240124lemma5_2}
    Let $\mf{L}_{\mf{E}_i}$ be the line on $\R^2$ containing the edge $\mf{E}_i$. Let $\mf{L}''_{
    \mf{E}_i
    }$ be the line parallel to $\mf{L}_{\mf{E}_i}$ that passes through the point $(\mf{p}_i, \mf{q}_i-2)$. 
    \begin{enumerate}
        \item[(1)] None of the vertices in the 
        reduced Newton diagram $\mc{R}\mc{N}_d(
    \gamma_{\star})$ lies to the lower left of $\mf{L}_{\mf{E}_i}$. More precisely, if we  write $\mf{L}_{\mf{E}_i}$ as 
    \begin{equation}\label{240124e5_18}
        \{(x, y): a_{\mf{E}_i} x+ b_{\mf{E}_i} y=c_{\mf{E}_i}\}, \ \ a_{\mf{E}_i}>0, b_{\mf{E}_i}>0, c_{\mf{E}_i}>0,
    \end{equation}
    then 
    \begin{equation}
        a_{\mf{E}_i} \mf{p}+ b_{\mf{E}_i} \mf{q}\ge c_{\mf{E}_i},
    \end{equation}
    for every 
    \begin{equation}
        (\mf{p}, \mf{q})\in 
        \mc{R}\mc{N}_d(
        \gamma_{\star}).
    \end{equation}
    \item[(2)] None of the vertices in the Newton diagram $\mc{N}_d(
    \partial^2_{\theta} \gamma_{\star}
    )$ lies to the lower left of $\mf{L}''_{
    \mf{E}_i
    }$.
    \end{enumerate}
\end{lemma}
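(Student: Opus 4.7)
The plan is to expand $\gamma_\star(v,\theta) = \gamma(v, \theta + \mf{r}_{i,j} v)$ monomial-by-monomial and verify directly that every exponent vector appearing in the resulting Taylor expansion sits on or above $\mf{L}_{\mf{E}_i}$. Writing $\gamma(v,\theta) = \sum_{(\mf{p},\mf{q})\in\mc{T}(\gamma)} c_{\mf{p},\mf{q}} v^{\mf{p}} \theta^{\mf{q}}$ and applying the binomial theorem,
\[
\gamma_\star(v,\theta) = \sum_{(\mf{p},\mf{q})\in \mc{T}(\gamma)} \sum_{k=0}^{\mf{q}} c_{\mf{p},\mf{q}} \binom{\mf{q}}{k} \mf{r}_{i,j}^{\,\mf{q}-k} \, v^{\mf{p}+\mf{q}-k} \theta^{k},
\]
so every monomial appearing in $\gamma_\star$ has an exponent vector of the form $(\mf{p}+\mf{q}-k, k)$ with $(\mf{p},\mf{q}) \in \mc{T}(\gamma)$ and $0 \le k \le \mf{q}$. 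Crucially, cancellations among the coefficients can only shrink the Taylor support of $\gamma_\star$, never enlarge it, so the containment we will verify for the formal expansion persists after collecting terms.

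Since Subsection \ref{240220subsection6_2} reduces to $\mf{m}_i = 1$, and $\mf{m}_i = b_{\mf{E}_i}/a_{\mf{E}_i}$ for the line equation \eqref{240124e5_18}, we have $a_{\mf{E}_i} = b_{\mf{E}_i}$. Thus, for every formal exponent vector $(\mf{p}+\mf{q}-k, k)$,
\[
a_{\mf{E}_i}(\mf{p}+\mf{q}-k) + b_{\mf{E}_i} k = a_{\mf{E}_i}\mf{p} + a_{\mf{E}_i}(\mf{q}-k) + b_{\mf{E}_i} k = a_{\mf{E}_i}\mf{p} + b_{\mf{E}_i} \mf{q} \ge c_{\mf{E}_i},
\]
where the last inequality holds because $(\mf{p},\mf{q}) \in \mc{T}(\gamma) \subset \mc{N}(\gamma)$ lies on or above $\mf{L}_{\mf{E}_i}$. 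Hence the Taylor support of $\gamma_\star$ is contained in the closed halfplane $\{a_{\mf{E}_i} x + b_{\mf{E}_i} y \ge c_{\mf{E}_i}\}$, and since this halfplane already absorbs $\R_+^2$, the Newton polyhedron $\mc{N}(\gamma_\star)$ (and therefore also the reduced version $\mc{R}\mc{N}(\gamma_\star)$) is contained in it as well. All vertices of $\mc{R}\mc{N}_d(\gamma_\star)$ consequently satisfy the desired inequality, proving item (1).

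For item (2), differentiation $\partial_\theta^2$ converts each exponent $(A,B)$ with $B\ge 2$ into $(A, B-2)$, which translates the halfplane of item (1) downward by $2$ in the $y$ coordinate: $a_{\mf{E}_i} A + b_{\mf{E}_i} (B-2) \ge c_{\mf{E}_i} - 2 b_{\mf{E}_i}$, precisely the equation of $\mf{L}''_{\mf{E}_i}$. Combining this with Lemma \ref{240124lemma5_1}(2), which confirms that $(\mf{p}_i,\mf{q}_i-2)$ is actually a vertex of $\mc{N}_d(\partial_\theta^2\gamma_\star)$ lying on $\mf{L}''_{\mf{E}_i}$, yields item (2). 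The proof is essentially bookkeeping, and I do not anticipate a hard step; the only mild subtlety is the reduction to $\mf{m}_i = 1$, which is what makes the clean identity $a_{\mf{E}_i}(\mf{q}-k)+b_{\mf{E}_i}k = b_{\mf{E}_i}\mf{q}$ available and allows the halfplane to be preserved exactly rather than only approximately.
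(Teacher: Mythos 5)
Your argument is the same as the paper's: expand each monomial of $\gamma$ under $\theta\mapsto\theta+\mf{r}_{i,j}v$ and check that the resulting exponent vectors stay on the correct side of $\mf{L}_{\mf{E}_i}$. The mechanism you identify—that binomial expansion slides exponents along a line parallel to $\mf{L}_{\mf{E}_i}$—is exactly the content of the paper's terse proof.

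One step, however, is overstated and needs a small repair. You write that $(\mf{p},\mf{q})\in\mc{T}(\gamma)\subset\mc{N}(\gamma)$ "lies on or above $\mf{L}_{\mf{E}_i}$", and conclude that the entire Taylor support of $\gamma_\star$ sits in the halfplane $\{a_{\mf{E}_i}x+b_{\mf{E}_i}y\ge c_{\mf{E}_i}\}$. This is false whenever $(\mf{p}_0,0)$ lies to the lower left of $\mf{L}_{\mf{E}_i}$, i.e.\ precisely when $\mf{d}(\mf{L}_{\mf{E}_i},(\mf{p}_0,0))>0$—a situation the paper cares about very much. In that case $\mf{L}_{\mf{E}_i}$ supports the \emph{reduced} Newton polyhedron $\mc{R}\mc{N}(\gamma)$, not the full $\mc{N}(\gamma)$, and $\mc{T}(\gamma)$ contains points with $\mf{q}=0$ strictly below the line. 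The fix is quick and restores the lemma as stated: the inequality $a_{\mf{E}_i}\mf{p}+b_{\mf{E}_i}\mf{q}\ge c_{\mf{E}_i}$ only holds for $(\mf{p},\mf{q})\in\mc{T}(\gamma)$ with $\mf{q}\ge 1$, but these are the only terms that can feed into $\mc{R}\mc{N}(\gamma_\star)$; a monomial $v^{\mf{p}}$ with $\mf{q}=0$ is unchanged by the substitution and never contributes a point with positive $\theta$-exponent to $\mc{T}(\gamma_\star)$. Similarly, for item (2), only $\mf{q}\ge 2$ terms survive $\partial_\theta^2$, and those again lie in $\mc{R}\mc{N}(\gamma)$. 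With this restriction to the reduced support your computation is correct, and the rest of your write-up (including the reduction to $\mf{m}_i=1$ so that $a_{\mf{E}_i}=b_{\mf{E}_i}$) matches the paper's proof.
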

\begin{proof}[Proof of Lemma \ref{240124lemma5_2}] The proofs of item (1) and item (2) are the same, and we will write down the details for the proof of item (1).

    Note that none of the vertices of the reduced Newton diagram $\mc{R}\mc{N}_d(\gamma)$ lies to the lower left of $\mf{L}_{\mf{E}_i}$. Take one vertex 
    \begin{equation}
        (\mf{p}, \mf{q})\in \mc{T}(\gamma). 
    \end{equation}
    When applying the change of variables 
\begin{equation}
    \theta\to \theta+\mf{r}_{i, j}v,
\end{equation}
the monomial $v^{\mf{p}}\theta^{\mf{q}}$ becomes 
\begin{equation}\label{240222e6_75}
    v^{\mf{p}}(\theta+ \mf{r}_{i, j}v)^{\mf{q}}.
\end{equation}
We expand \eqref{240222e6_75}, and every resulting monomial still never lies to the lower left of $\mf{L}_{\mf{E}_i}$. This finishes the proof of item (1). 
\end{proof}

Let 
\begin{equation}\label{240220e6_71bbb}
    (\mf{p}_{\star}, \mf{q}_{\star})\in \mc{R}\mc{N}_d(
    \gamma_{\star})
\end{equation}
be the vertex that has the smallest horizontal coordinate bigger than $\mf{p}_i$. In other words, for every vertex $(\mf{p}, \mf{q})\in \mc{R}\mc{N}_d(
\gamma_{\star})$, we either have 
\begin{equation}
    \mf{p}\le \mf{p}_i
\end{equation}
or have 
\begin{equation}
    \mf{p}\ge \mf{p}_{\star}. 
\end{equation}
Lemma \ref{240124lemma5_2} suggests that we consider three cases: The first case is that there is no such $(\mf{p}_{\star}, \mf{q}_{\star})$, the second case is when 
\begin{equation}\label{240125e5_23}
    (\mf{p}_{\star}, \mf{q}_{\star})\in \mf{L}_{\mf{E}_i},
\end{equation}
and the third case is 
\begin{equation}\label{240125e5_24}
    (\mf{p}_{\star}, \mf{q}_{\star})\not\in \mf{L}_{\mf{E}_i},
\end{equation}
In the third case, we always have 
\begin{equation}
    a_{\mf{E}_i} \mf{p}_{\star}+ b_{\mf{E}_i} \mf{q}_{\star}> c_{\mf{E}_i},
\end{equation}
where we are still using the notation in \eqref{240124e5_18}.

\subsubsection{The second case}\label{240124subsection5_1}

Let us consider the case where 
\begin{equation}\label{240124e5_26}
    (\mf{p}_{\star}, \mf{q}_{\star})\in \mf{L}_{\mf{E}_i}.
\end{equation}
Recall that $\mf{m}_i=1$ and recall from \eqref{240124e5_16} that we always have 
\begin{equation}\label{240124e5_27}
    |\theta|\le c_{\gamma}|v|
\end{equation}
on the support of $a_{\star}(v, \theta)$, 
where $c_{\gamma}$ is a sufficiently small constant that is allowed to depend on $\gamma$. This means that on the support of $a_{\star}(v, \theta)$, for every 
\begin{equation}
        (\mf{p}, \mf{q})\in 
        \mc{R}\mc{N}_d(
        \gamma_{\star}) \text{ with } \mf{p}> \mf{p}_i,
    \end{equation}
    we  know that $|v|^{\mf{p}_i}|\theta|^{\mf{q}_i}$ is always much smaller compared with $|v|^{\mf{p}}|\theta|^{\mf{q}}$. \footnote{This is the motivation for ``truncating" the reduced Newton diagrams in \eqref{240124e1_18}.} The way we proceed is very similar to what we did in Section \ref{240124section4}. Let us give a brief sketch. Similarly to \eqref{240108e4_5}, we label all the vertices $(\mf{p}, \mf{q})$ in $\mc{R}\mc{N}_d(\gamma_{\star})$ with $\mf{p}\ge \mf{p}_{\star}$ by 
    \begin{equation}
        (\mf{p}_{\star+0}, \mf{q}_{\star+0}), (\mf{p}_{\star+1}, \mf{q}_{\star+1}), (\mf{p}_{\star+2}, \mf{q}_{\star+2}), \dots,
    \end{equation}
    where 
    \begin{equation}
        (\mf{p}_{\star+0}, \mf{q}_{\star+0}):=(\mf{p}_{\star}, \mf{q}_{\star}),
    \end{equation}
    and 
    \begin{equation}
        \mf{p}_{\star+0}< \mf{p}_{\star+1}< \mf{p}_{\star+2}<\dots
    \end{equation}
The reason we only consider $\mf{p}\ge \mf{p}_{\star}$ comes from \eqref{240124e5_27}. Similarly to Section \eqref{240108e4_5}, we decompose the region \eqref{240124e5_27} into smaller regions of the forms \eqref{240124e4_8}, \eqref{240220e5_10}. Depending on the form of the dominating monomial $v^{\mf{p}_{\star+\iota}}\theta^{\mf{q}_{\star+\iota}}$ we have several cases. 
The first case is 
\begin{equation}\label{240124e5_31}
    \mf{p}_{\star+\iota}\ge 1, \mf{q}_{\star+\iota}\ge 2,
\end{equation}
and
the second case is 
\begin{equation}\label{240124e5_32}
    \mf{p}_{\star+\iota}\ge 1, \mf{q}_{\star+\iota}=1.
\end{equation}
The reason for having only two cases instead of the four cases in \eqref{240107e4_26}--\eqref{240108e4_29} is as follows. The case 
\begin{equation}
    \mf{p}_{\star+\iota}=0, \mf{q}_{\star+\iota}\ge 2
\end{equation}
does not exist because 
\begin{equation}
    \mf{p}_{\star+\iota}> \mf{p}_i\ge 0.
\end{equation}
Here $(\mf{p}_i, \mf{q}_i)$ is the left end of the edge $\mf{E}_i$ we are currently considering. The case analogous to \eqref{240107e4_28} does not exist either because we have at least two vertices in the reduced Newton diagram $\mc{R}\mc{N}_d(\gamma_{\star})$.

The case \eqref{240124e5_31} can be handled in exactly the same way as Subsection \ref{240106subsection4_1}, and the case \eqref{240124e5_32} in the same way as Subsection \ref{240118subsection4_4}. Similarly to \eqref{240104e4_72} and \eqref{240125e4_276}, when trying to prove \eqref{240123e5_7}, we naturally see the exponent 
\begin{equation}\label{240125e5_36}
    p> \sup_{
    \mf{L}\in \mf{L}_{
    (\mf{p}_{\star+\iota}, \mf{q}_{\star+\iota})
    }(\gamma_{\star})
    } \mf{d}(
    \mf{L}, (\mf{p}_{0, \star}, 0)
    ),
\end{equation}
where $\mf{p}_{0, \star}$ is the biggest  integer satisfying 
\begin{equation}\label{240125e5_37}
    h_{1, \star}(v)=O(|v|^{
    \mf{p}_{0, \star}
    }),
\end{equation}
and 
\begin{equation}
    h_{1, \star}(v):=
    \sum_{\mf{p}\in \N} \frac{1}{\mf{p}!} \partial^{\mf{p}}_{v}\gamma_{\star}(0, 0)v^{\mf{p}}.
\end{equation}
This explains why we include the exponent \eqref{240125e5_36} in the definition of the vertical Newton distance $\mf{D}_{\gamma}$. \\

After dealing with the cases \eqref{240124e5_31} and \eqref{240124e5_32}, we still need to deal with edges $\mf{E}_{\star+\iota}$ in the reduced Newton diagram $\mc{R}\mc{N}_d(\gamma_{\star})$, which is the edge that connects the vertices 
\begin{equation}
(
\mf{p}_{\star+\iota}, \mf{q}_{\star+\iota}
), \ (
\mf{p}_{\star+\iota+1}, \mf{q}_{\star+\iota+1}
),
\end{equation}
with $\iota\ge 0$. What we do is to go back to the beginning of Section \ref{240220section6}, and repeat the whole argument in this section.

\subsubsection{The third case}

Here we consider the case \eqref{240125e5_24}, that is, the vertex $(\mf{p}_{\star}, \mf{q}_{\star})$ exists and 
\begin{equation}
    (\mf{p}_{\star}, \mf{q}_{\star})\not\in 
    \mf{L}_{
    \mf{E}_i
    }.
\end{equation}
See the following picture.

 \includegraphics[scale=0.6]{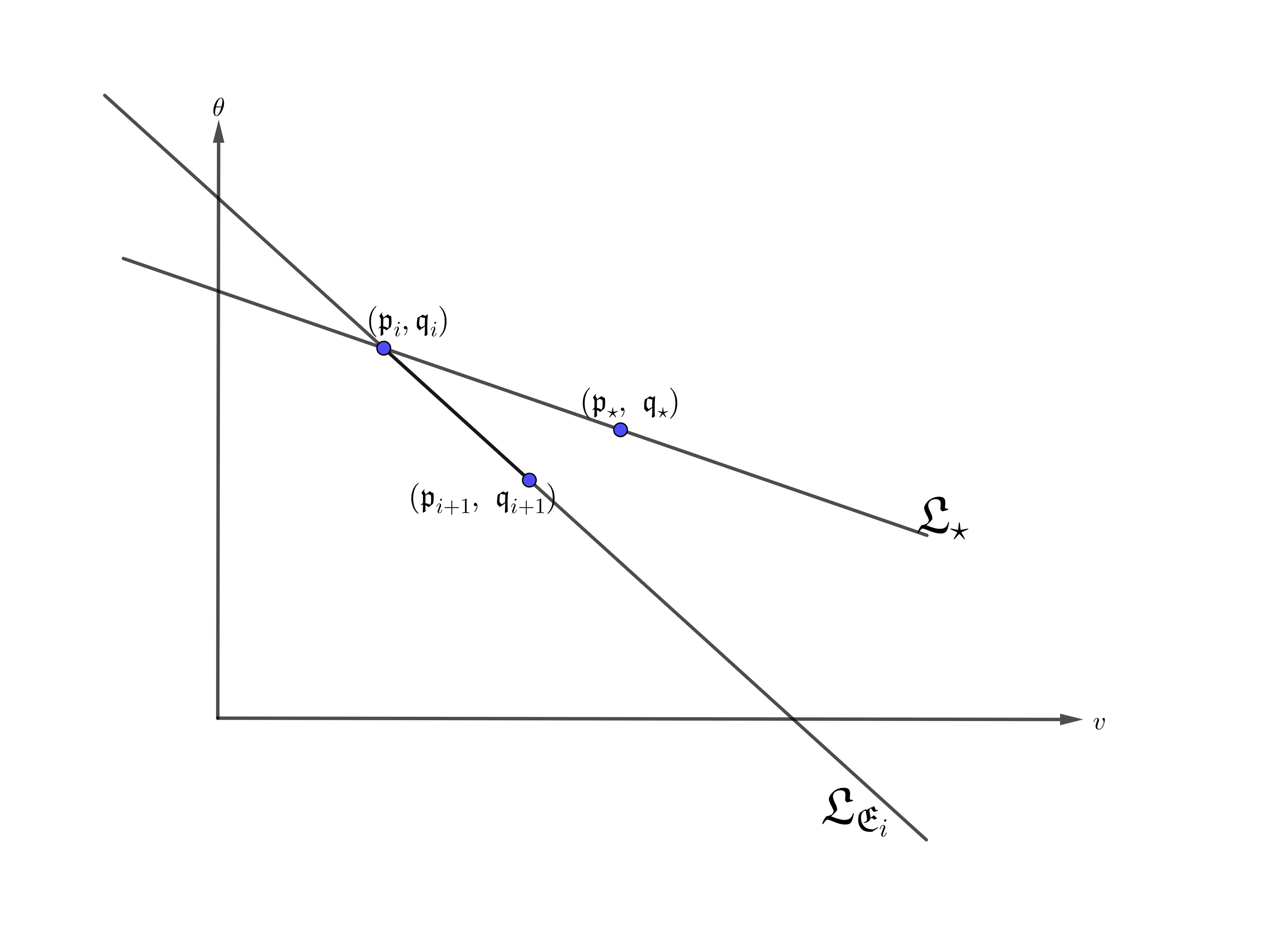}

In the above picture, the vertices $(\mf{p}_i, \mf{q}_i)$ and $(\mf{p}_{i+1}, \mf{q}_{i+1})$ are from the reduced Newton diagram $\mc{R}\mc{N}_d(\gamma)$, and the vertices $(\mf{p}_i, \mf{q}_i)$ and $(\mf{p}_{\star}, \mf{q}_{\star})$ are from the reduced Newton diagram $\mc{R}\mc{N}_d(\gamma_{\star})$. In particular, the vertex $(\mf{p}_i, \mf{q}_i)$ lies in both reduced Newton diagrams. Recall that $\mf{E}_i$ is the edge of the reduced Newton diagram $\mc{R}\mc{N}_d(\gamma)$ connecting the vertices $(\mf{p}_i, \mf{q}_i)$ and $(\mf{p}_{i+1}, \mf{q}_{i+1})$, and $\mf{L}_{\mf{E}_i}$ is the line containing $\mf{E}_i$. 

\begin{definition}
    Take two lines
    \begin{equation}
        \mf{L}_1=\{
        (x, y): a_{\mf{L}_1} x+ b_{\mf{L}_1} y=c_{\mf{L}_1}
        \}, \ \ \mf{L}_2=\{
        (x, y): a_{\mf{L}_2} x+ b_{\mf{L}_2} y=c_{\mf{L}_2}
        \},
    \end{equation}
    with 
    \begin{equation}
        a_{\mf{L}_{i}}>0, b_{\mf{L}_{i}}>0, c_{\mf{L}_{i}}>0, \ i=1, 2.
    \end{equation}
    We say that 
    \begin{equation}
        \mf{L}_1\le \mf{L}_2
    \end{equation}
    if 
    \begin{equation}
        \frac{
        a_{\mf{L}_1}
        }{b_{\mf{L}_1}} 
        \le  
        \frac{
        a_{\mf{L}_2}
        }{b_{\mf{L}_2}}.
    \end{equation}
    Similarly, we define $\mf{L}_1< \mf{L}_2$. 
\end{definition}

If we compare the current case with the case discussed in Subsection \ref{240124subsection5_1}, the biggest difference is that on the support of the amplitude function $a_{\star}(v, \theta)$, the monomial $v^{\mf{p}_i}\theta^{\mf{q}_i}$ never dominates in the case in Subsection \ref{240124subsection5_1}, but it may dominate in the current case on certain subset of the support of $a_{\star}(v, \theta)$. This can be seen more clearly from the above picture: There exists a line 
\begin{equation}
    \mf{L}\in \mf{L}_{
    (\mf{p}_i, \mf{q}_i)
    }(\gamma_{\star})
\end{equation}
satisfying 
\begin{equation}
    \mf{L}_{\star}< \mf{L}< \mf{L}_{\mf{E}_i}.
\end{equation}
As a consequence, if we repeat the argument in Subsection \ref{240106subsection4_1}  and Subsection \ref{240118subsection4_4}, similarly to what was done above the equation \eqref{240125e5_36}, then we will see the constraint 
\begin{equation}\label{240125e5_46}
    p>
    \max\Big\{
    \sup_{
    \substack{
    \mf{L}\in \mf{L}_{
    (\mf{p}_i, \mf{q}_i)
    }(\gamma_{\star})\\
    \mf{L}_{\star}< \mf{L}< \mf{L}_{\mf{E}_i}
    }
    }
    \mf{d}(
    \mf{L}, (\mf{p}_{0, \star}, 0)
    ), 
    \sup_{\iota}
    \sup_{
    \mf{L}\in \mf{L}_{
    (\mf{p}_{\star+\iota}, \mf{q}_{\star+\iota})
    }(\gamma_{\star})
    } \mf{d}(
    \mf{L}, (\mf{p}_{0, \star}, 0)
    )
    \Big\},
\end{equation}
not just the constraint in \eqref{240125e5_36}. Here
 $\mf{p}_{0, \star}$ was introduced in \eqref{240125e5_37}. The latter term in the max in \eqref{240125e5_46} appears in the definition of the vertical Newton distance $\mf{D}_{\gamma}$, while the former term does not. 
 \begin{claim}\label{240125claim5_4}
 Under the above notation, it holds that 
 \begin{equation}
    \sup_{
    \substack{
    \mf{L}\in \mf{L}_{
    (\mf{p}_i, \mf{q}_i)
    }(\gamma_{\star})\\
    \mf{L}_{\star}< \mf{L}< \mf{L}_{\mf{E}_i}
    }
    }
    \mf{d}(
    \mf{L}, (\mf{p}_{0, \star}, 0)
    )
    \le 
    \mf{D}_{\gamma}. 
\end{equation}    
 \end{claim}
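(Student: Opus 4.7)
The plan is to reduce the supremum to the two endpoint lines of the slope interval and then bound each endpoint in terms of quantities already tracked by the algorithm. Writing any line $\mf{L}$ through $(\mf{p}_i, \mf{q}_i)$ as $\mf{L}: ax + by = c$ with $a, b, c > 0$ and parametrizing by its steepness $s = a/b$, the constraint $c = a\mf{p}_i + b\mf{q}_i$ gives the height of $\mf{L}$ at $x = \mf{p}_{0, \star}$ equal to $\mf{q}_i - s(\mf{p}_{0, \star} - \mf{p}_i)$, which is monotone in $s$. The supremum of $\mf{d}(\mf{L}, (\mf{p}_{0, \star}, 0))$ over the open slope interval between $\mf{L}_\star$ and $\mf{L}_{\mf{E}_i}$ is therefore attained in the limit: at $\mf{L}_\star$ when $\mf{p}_{0, \star} \ge \mf{p}_i$, and at $\mf{L}_{\mf{E}_i}$ when $\mf{p}_{0, \star} < \mf{p}_i$ (the case $\mf{p}_{0, \star} = \infty$ being trivial by convention).

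In the first subcase, since $(\mf{p}_\star, \mf{q}_\star)$ is a vertex of $\mc{R}\mc{N}_d(\gamma_\star)$ with $\mf{p}_\star > \mf{p}_i$ and $\mf{L}_\star$ is the edge of $\mc{R}\mc{N}_d(\gamma_\star)$ adjacent to this vertex, $\mf{L}_\star$ lies on the boundary of the open family $\mf{L}_{(\mf{p}_\star, \mf{q}_\star)}(\gamma_\star)$: vertex-tangent lines at that vertex approach $\mf{L}_\star$ from the less steep side. Continuity of the vertical distance in the slope then yields
\[
\mf{d}(\mf{L}_\star, (\mf{p}_{0, \star}, 0)) \le \sup_{\mf{L}' \in \mf{L}_{(\mf{p}_\star, \mf{q}_\star)}(\gamma_\star)} \mf{d}(\mf{L}', (\mf{p}_{0, \star}, 0)) \le \mf{d}_{>(\mf{p}_i, \mf{q}_i)}(\gamma_\star) \le \mf{D}_\gamma,
\]
the last inequality being the update rule \eqref{240129e1_37kk} executed at the current stage of the algorithm.

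The hard part will be the second subcase, in which the reference point $(\mf{p}_{0, \star}, 0)$ refers to $\gamma_\star$'s data rather than $\gamma$'s. The key intermediate claim I would establish is $\mf{p}_{0, \star} = \mf{p}_0$. Having renamed the $v$ variable so that $\mf{m}_i = 1$, the edge $\mf{E}_i$ lies on the anti-diagonal $x + y = \mf{p}_i + \mf{q}_i$, and convexity of $\mc{R}\mc{N}(\gamma)$ forces every $(\mf{p}', \mf{q}') \in \mc{T}(\gamma)$ with $\mf{q}' \ge 1$ to satisfy $\mf{p}' + \mf{q}' \ge \mf{p}_i + \mf{q}_i$. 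Decomposing $\gamma_\star(v, 0) = \gamma(v, \mf{r}_{i, j} v) = h_1(v) + \gamma_{\mf{r}\mf{e}}(v, \mf{r}_{i, j} v)$, the second summand is a power series in $v$ of order at least $\mf{p}_i + \mf{q}_i$; hence for every $\mf{p} < \mf{p}_i + \mf{q}_i$ the coefficient of $v^{\mf{p}}$ in $\gamma_\star(v, 0)$ coincides with $c_{\mf{p}, 0}$. Since we are in the subcase $\mf{p}_{0, \star} < \mf{p}_i < \mf{p}_i + \mf{q}_i$, this comparison forces $\mf{p}_{0, \star} = \mf{p}_0$.

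With $\mf{p}_{0, \star} = \mf{p}_0$ in hand, the supremum equals $\mf{d}(\mf{L}_{\mf{E}_i}, (\mf{p}_0, 0))$, and since $\mf{L}_{\mf{E}_i}$ is the limit of vertex-tangent lines at $(\mf{p}_{i+1}, \mf{q}_{i+1}) \in \mc{R}\mc{N}_d(\gamma)$ approached from the steeper side, continuity gives
\[
\mf{d}(\mf{L}_{\mf{E}_i}, (\mf{p}_0, 0)) \le \sup_{\mf{L}' \in \mf{L}_{(\mf{p}_{i+1}, \mf{q}_{i+1})}(\gamma)} \mf{d}(\mf{L}', (\mf{p}_0, 0)) \le \mf{d}(\gamma) \le \mf{D}_\gamma,
\]
completing the argument.
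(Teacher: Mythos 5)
Your proposal is correct and follows the same approach as the paper's proof: the same case split on whether $\mf{p}_{0,\star}\ge \mf{p}_i$ or $\mf{p}_{0,\star}<\mf{p}_i$, the same reduction of the supremum to $\mf{L}_\star$ or $\mf{L}_{\mf{E}_i}$ via monotonicity in the steepness, and the same key observation $\mf{p}_0=\mf{p}_{0,\star}$ in the second subcase. You fill in details the paper leaves terse, in particular the Taylor-expansion argument for $\mf{p}_0=\mf{p}_{0,\star}$ (which the paper attributes to Lemma \ref{240124lemma5_2}) and the chains of inequalities linking the endpoint distances to $\mf{d}_{>(\mf{p}_i,\mf{q}_i)}(\gamma_\star)$ and $\mf{d}(\gamma)$, which are the quantities contributing to $\mf{D}_\gamma$.
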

\begin{proof}[Proof of Claim \ref{240125claim5_4}]
    We consider two cases separately: The case 
    \begin{equation}\label{240125e5_48}
        \mf{p}_{0, \star}\ge \mf{p}_i,
    \end{equation}
    and the case 
    \begin{equation}\label{240125e5_49}
        \mf{p}_{0, \star}< \mf{p}_i.
    \end{equation}
    For the case \eqref{240125e5_48}, we have 
    \begin{equation}
        \sup_{
    \substack{
    \mf{L}\in \mf{L}_{
    (\mf{p}_i, \mf{q}_i)
    }(\gamma_{\star})\\
    \mf{L}_{\star}< \mf{L}< \mf{L}_{\mf{E}_i}
    }
    }
    \mf{d}(
    \mf{L}, (\mf{p}_{0, \star}, 0)
    )\le 
    \mf{d}(\mf{L}_{\star}, (\mf{p}_{0, \star}, 0))\le \mf{D}_{\gamma}. 
    \end{equation}
    For the case \eqref{240125e5_49}, we have 
    \begin{equation}
        \sup_{
    \substack{
    \mf{L}\in \mf{L}_{
    (\mf{p}_i, \mf{q}_i)
    }(\gamma_{\star})\\
    \mf{L}_{\star}< \mf{L}< \mf{L}_{\mf{E}_i}
    }
    }
    \mf{d}(
    \mf{L}, (\mf{p}_{0, \star}, 0)
    )\le 
    \mf{d}(\mf{L}_{
    \mf{E}_i
    }, (\mf{p}_{0, \star}, 0)). 
    \end{equation}
    A key observation we need here is that 
    \begin{equation}
        \mf{p}_0=\mf{p}_{0, \star}, 
    \end{equation}
    which follows directly from Lemma \ref{240124lemma5_2}. This observation implies that
    \begin{equation}
        \mf{d}(\mf{L}_{
    \mf{E}_i
    }, (\mf{p}_{0, \star}, 0))
    = 
    \mf{d}(\mf{L}_{
    \mf{E}_i
    }, (\mf{p}_{0}, 0))\le \mf{D}_{\gamma}. 
    \end{equation}
    This finishes the proof of the claim. 
\end{proof}

By Claim \ref{240125claim5_4}, we see that the constraint \eqref{240125e5_46} follows from the constraint $p> \{2, \mf{D}_{\gamma}\}$.

\subsubsection{The first case}\label{240125subsection5_3}

Here we discuss the case where there is no $(\mf{p}, \mf{q})$ in the reduced Newton diagram $\mc{R}\mc{N}_d(\gamma_{\star})$ satisfying $\mf{p}> \mf{p}_i$. In other words, on the support of the amplitude function $a_{\star}(v, \theta)$, we always have 
\begin{equation}
    |\gamma_{\mf{r}\mf{e}, \star}(v, \theta)|\simeq |v|^{
    \mf{p}_i
    }
    |\theta|^{\mf{q}_i},
\end{equation}
where 
\begin{equation}
    \gamma_{\mf{r}\mf{e}, \star}(v, \theta):=
    \sum_{
    (\mf{p}, \mf{q})\in \N^2, \mf{q}\ge 1
    }
    \frac{1}{
    \mf{p}! \mf{q}!
    }\partial_{v}^{\mf{p}}
    \partial_{\theta}^{\mf{q}}\gamma_{\star}(0, 0) v^{\mf{p}}\theta^{\mf{q}}. 
\end{equation}
There are several cases to consider. The first case is when 
\begin{equation}\label{240125e5_56}
    \mf{p}_{\star}\ge 1, \mf{q}_{\star}\ge 2; 
\end{equation}
the second case is when 
\begin{equation}\label{240125e5_57}
    \mf{p}_{\star}= 0, \mf{q}_{\star}\ge 2; 
\end{equation}
the third case is when 
\begin{equation}\label{240125e5_58}
\mf{q}_i=1 \text{ and }
    (\mf{p}_i, \mf{q}_i) \text{ is the only vertex in } \mc{R}\mc{N}_d(\gamma_{\star});
\end{equation}
and the last case is when \begin{equation}\label{240125e5_59}
\mf{q}_i=1 \text{ and }
    (\mf{p}_i, \mf{q}_i) \text{ is not the only vertex in } \mc{R}\mc{N}_d(\gamma_{\star}).
\end{equation}
The case distinction here is precisely the same as \eqref{240107e4_26}--\eqref{240108e4_29} in Section \ref{240124section4}, and we refer to the paragraph right above Subsection \ref{240106subsection4_1} for the motivation behind the case distinction. \\

The proofs for the cases \eqref{240125e5_56}--\eqref{240125e5_59} are essentially the same as those presented in Subsection \ref{240106subsection4_1}--\ref{240118subsection4_4}, respectively. Here we will only explain the small differences. \\

Let us start with the case \eqref{240125e5_56}.  This case is the simplest, and we only make several remarks here. Recall the definition of $\mf{p}_{0, \star}$ in \eqref{240125e5_37}, and consider the case 
\begin{equation}
    \mf{p}_{0, \star}< \mf{p}_i.
\end{equation}
In this case, because of the item (3) in the strongly degenerate condition (Definition \ref{231122defi1_5}), we know that there must exist $(\mf{p}, \mf{q})\in \mc{R}\mc{N}_d(\gamma_{\star})$ satisfying 
\begin{equation}
    \mf{p}\le \mf{p}_{0, \star},
\end{equation}
as otherwise the function $\gamma$ itself would be strongly degenerate. Next, let us remark on the convention we made in \eqref{240125e1_19}; this convention is needed when we consider the case 
\begin{equation}
    \mf{p}_{0, \star}\ge  \mf{p}_i.
\end{equation}
Let $\mf{L}$ be the horizontal line passing through the point $(\mf{p}_i, \mf{q}_i)$.

\includegraphics[scale=0.6]{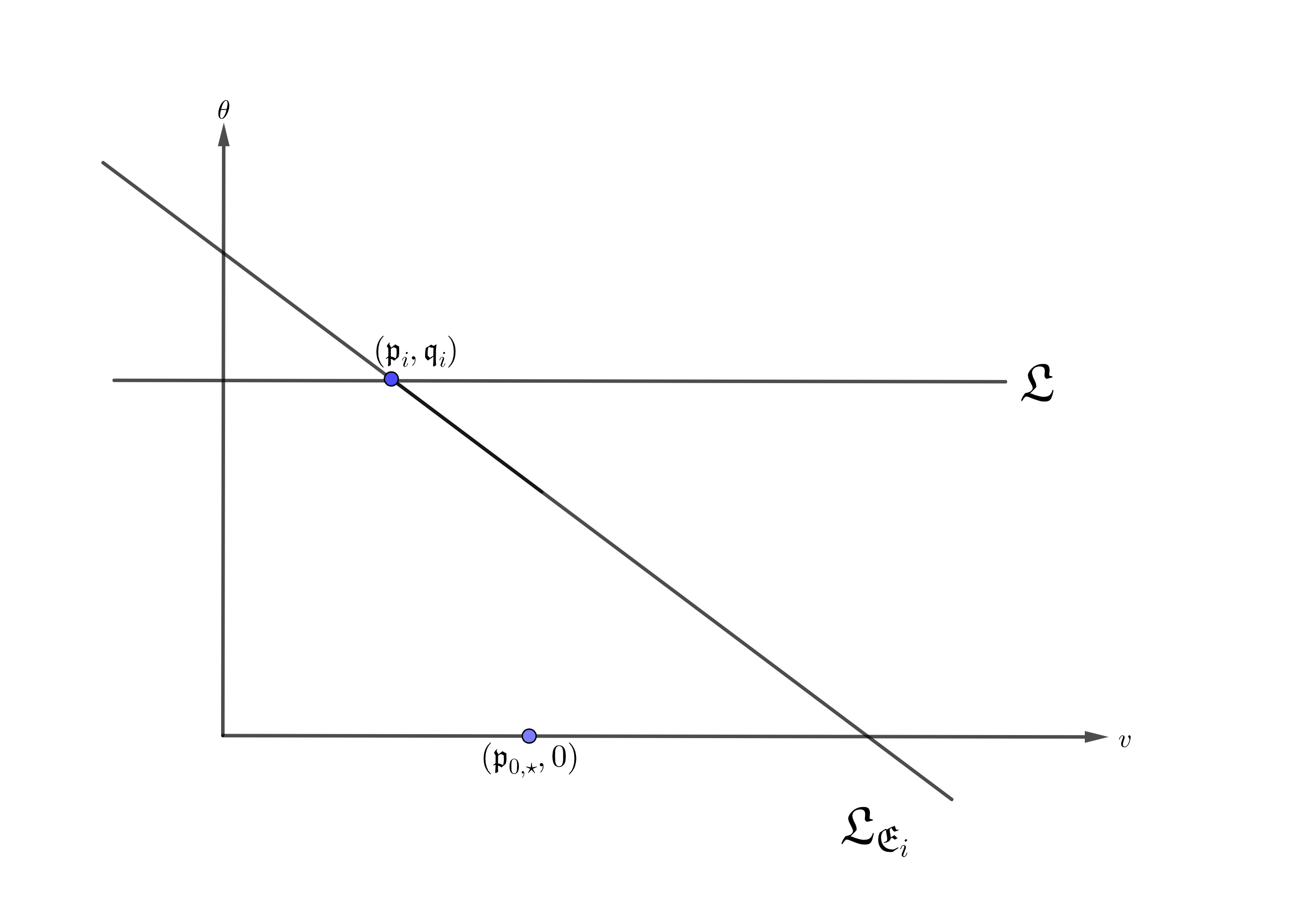}

If we run the argument in Subsection \ref{240125e5_56}, then we will see the constraint 
\begin{equation}
    p> 
    \begin{cases}
        \mf{q}_i & \text{ if } \mf{p}_{0, \star}< \infty,\\
        2 & \text{ if } \mf{p}_{0, \star}=\infty. 
    \end{cases}
\end{equation}
In particular, $\mf{q}_i$ is the vertical distance between the line $\mf{L}$ and the point $(\mf{p}_{0, \star}, 0)$. This explains the convention we made in \eqref{240125e1_19}, and finishes all the comments we would like to make on the case \eqref{240125e5_56}. \\

Next, consider the case \eqref{240125e5_57}. In this case, the reduced Newton diagram $\mc{R}\mc{N}_d(\gamma_{\star})$ consists of only one vertex $(\mf{p}_i, 0)$. The proof for this case is precisely the same as that in Subsection \ref{240107subsection4_2}. Let us only make one remark here that 
\begin{equation}
    \cine(\gamma_{\star})(v, \theta)=
    \det\begin{bmatrix}
        \partial_{\theta\theta}\gamma_{\star}, & \partial_{\theta\theta\theta}\gamma_{\star}\\
        \partial_{v\theta}\gamma_{\star}, & \partial_{v\theta\theta}\gamma_{\star}
    \end{bmatrix}
    =\cine(\gamma)(v, \theta),
\end{equation}
and therefore $\cine(\gamma_{\star})$ does not vanish constantly either, by the assumption that $\gamma$ itself is not strongly degenerate. \\

Next, consider the case \eqref{240125e5_58}. The proof for this case is the same as that in Subsection \ref{240108subsection4_3}. We only need to make sure that the new function $\gamma_{\star}$ is not strongly degenerate. Recall from
\eqref{240125e4_222} that 
\begin{equation}
    \gamma(v, \theta)=v^{
    \mf{p}_i
    } h_2(\theta)+ O(
    |v|^{\mf{p}_i+1}|\theta|
    )+ h_1(v),
\end{equation}
where 
\begin{equation}
    h_2(\theta)= \theta+ o(\theta),
\end{equation}
and the cinematic curvature of $v^{
\mf{p}_i
}h_2(v)$ does not vanish constantly. Note that when we do the change of variable 
\begin{equation}
    \theta\to \theta+ \mf{r}_{i, j} v^{\mf{m}_i},
\end{equation}
the new function $\gamma_{\star}$ can still be written in a similar form 
\begin{equation}
    \gamma_{\star}(v, \theta)=v^{
    \mf{p}_i
    } h_2(\theta)+ O(
    |v|^{\mf{p}_i+1}|\theta|
    )+ \widetilde{h}_1(v),
\end{equation}
and clearly $\gamma_{\star}$ is not strongly degenerate. \\

The proof for the last case \eqref{240125e5_59} is precisely the same as that in Subsection \ref{240118subsection4_4}.

\section{Sharp \texorpdfstring{$L^p$}{} bounds: Finishing the proof}

In Section \ref{240124section4} and Section \ref{240220section6}, we explained the general strategy of proving $L^p$ bounds for the maximal operator $\mc{M}_{\gamma}$. Let us give a quick summary. 

For the analytic function $\gamma(v, \theta)$, we first draw its reduced Newton diagram $\mc{R}\mc{N}_d(\gamma)$. According to the reduced Newton diagram $\mc{R}\mc{N}_d(\gamma)$, we decompose $\B_{\epsilon}$ into different regions, given by \eqref{240124e4_8} and \eqref{240220e5_10}, that is,
\begin{equation}
    \B_{\epsilon}=
    \pnorm{
    \bigcup_{i=1}^{I_1+1}\mf{O}_i
    }\bigcup \pnorm{
    \bigcup_{i=1}^{I_1} \mf{O}'_i
    }.
\end{equation}
 On regions $\mf{O}_i$ given by \eqref{240124e4_8}, the function $\gamma(v, \theta)$ behaves like monomials $v^{\mf{p}_i} \theta^{\mf{q}_i}$ where $(\mf{p}_i, \mf{q}_i)$ is a vertex of the reduced Newton diagram; on regions $\mf{O}'_i$ given by \eqref{240220e5_10}, the function behaves like mixed homogeneous polynomials $\gamma_{\mf{E}_i}$, where $\mf{E}_i$ is an edge of the reduced Newton diagram. 

In Section \ref{240124section4}, we handle the vertices-dominating regions $\mf{O}_i$ given by \eqref{240124e4_8}, and in Section \ref{240220section6} we handle the edges-dominating regions $\mf{O}'_i$ given by \eqref{240220e5_10}. In Section \ref{240220section6}, we further decompose the region $\mf{O}'_i$  into a good region $\mf{O}'_{i, \mathrm{good}}$ and a bad region $\mf{O}'_{i, \mathrm{bad}}$. Here by ``good" we mean points in the region are ``away" from zeros of $\partial^2_{\theta}\gamma_{\mf{E}_i}$, and by ``bad" we mean ``close". On $\mf{O}'_{i, \mathrm{good}}$, the second derivative $\partial^2_{\theta}\gamma_{\mf{E}_i}$ still behaves like a monomial, and the proof goes in the same way as that in Section \ref{240124section4}.  The region $\mf{O}'_{i, \mathrm{bad}}$ is more interesting as points in the region are close to zeros of $\partial^2_{\theta}\gamma_{\mf{E}_i}$. We use the idea of resolutions of singularities to break bad regions into smaller pieces, and then repeat the whole argument in Section \ref{240124section4} and Section \ref{240220section6}.\\

Next, we discuss how the above process terminates. In Subsection \ref{240220subsection6_2} when handling bad regions
\begin{equation}
\mf{O}'_{i, j}\subset \mf{O}'_{i, \mathrm{bad}},
\end{equation} 
we were considering the edge $\mf{E}_i$, which is given by connecting the two vertices 
\begin{equation}
(
\mf{p}_i, \mf{q}_i
), (
\mf{p}_{i+1}, \mf{q}_{i+1}
)
\end{equation}
of the reduced Newton diagram $\mc{R}\mc{N}_d(\gamma)$. 
%
%
%
%
Recall the definition of the new function $\gamma_{\star}$ in \eqref{240124e5_17}. The above process will terminate if the reduced Newton diagram $\mc{R}\mc{N}_d(\gamma_{\star})$ does not contain any edge $\mf{L}$ satisfying 
\begin{equation}\label{240220e7_4}
\mf{L}< \mf{L}_{\mf{E}_i}.
\end{equation}
It may happen that the reduced Newton diagram $\mc{R}\mc{N}_d(\gamma_{\star})$ always contains an edge $\mf{L}$ satisfying \eqref{240220e7_4}, that is, the above process does not terminate after finitely many steps. However, Xiao \cite[Theorem 4.11]{Xia17} proved that starting from certain stage we must have 
\begin{equation}\label{240129e1_42kkkk}
        \gamma_{\star+0}(v, \theta)=c_{\star} v^{b_{\star}}
        (\theta+ \mf{r}_n v^{\mf{w}_n})^{W}
        -
        c_{\star} v^{b_{\star}}
        (\mf{r}_n v^{\mf{w}_n})^{W}
        ,
    \end{equation}
    where $\gamma_{\star+0}$ was introduced in \eqref{240129e1_42kkk}, $n=1, 2, \dots,$ $b_{\star}\ge 0, c_{\star}\neq 0, \ \mf{r}_{n}\neq 0, n=1, 2, \dots,$ 
    the exponents $\mf{w}_1, \mf{w}_2, \dots$ are non-zero positive rational numbers that can be written as fractions sharing a common denominator, and $W$ is a positive integer. 
Instead of doing the change of variable
\begin{equation}
    \theta+ \mf{r}_n v^{
    \mf{w}_n
    }\to \theta, \ n=1, 2, \dots,
\end{equation}
we should do the above change of variables all at once, that is, 
\begin{equation}
    \theta+ \mf{r}_1v^{
    \mf{w}_1
    }+\mf{r}_2v^{
    \mf{w}_2
    }+\dots\to \theta,
\end{equation}
which forces the algorithm to terminate immediately.


\noindent Department of Mathematics, University of Wisconsin-Madison, Madison, WI-53706, USA \\
Email addresses:\\
mchen454@math.wisc.edu\\
shaomingguo@math.wisc.edu

\end{document}